\newcommand*{\addFileDependency}[1]{
  \typeout{(#1)}
  \@addtofilelist{#1}
  \IfFileExists{#1}{}{\typeout{No file #1.}}
}
\newtheorem{proposition}{Proposition}
\newtheorem{lemma}[proposition]{Lemma}
\newtheorem{corollary}{Corollary}
\DeclareRobustCommand{\rchi}{{\mathpalette\irchi\relax}}
\newcommand{\irchi}[2]{\raisebox{\depth}{$#1\chi$}} 
\newcommand{\rafaelC}[1]{\textcolor{blue}{#1}}
\newcommand{\MateusF}[1]{\footnote{\textcolor{orange}{{\sc mateus}: #1}}}
\title{The connected Grundy coloring problem: Formulations and a local-search enhanced biased random-key genetic algorithm}
\author{
Mateus Carvalho{\thanks{Institute of Computing, Universidade Federal da Bahia, Salvador, BA 40170-115, Brazil ({\tt mateuscs@ufba.br}).}}
\and
Rafael A. Melo{\thanks{Institute of Computing, Universidade Federal da Bahia, Salvador, BA 40170-115, Brazil ({\tt rafael.melo@ufba.br}).}}
\and
Mauricio G. C. Resende {\thanks{Industrial \& Systems Engineering, University of Washington, 3900 E Stevens Way NE, Seattle, WA 98195, USA ({\tt mgcr@uw.edu}).}}
\and
Marcio C. Santos {\thanks{Departamento de Ciência da Computação, Universidade Federal de Minas Gerais, Belo Horizonte, MG 31270‐010, Brazil ({\tt marciocs@dcc.ufmg.br}).}}
\and 
Rodrigo F. Toso \thanks{Microsoft Corporation, One Microsoft Way, Redmond, WA 98052, USA ({\tt rofran@microsoft.com}).}
}
\begin{document}

\maketitle

\begin{abstract}

Given a graph $G=(V,E)$, a connected Grundy coloring is a proper vertex coloring that can be obtained by a first-fit heuristic on a connected vertex sequence. A first-fit coloring heuristic is one that attributes to each vertex in a sequence the lowest-index color not used for its preceding neighbors. A connected vertex sequence is one in which each element, except for the first one, is connected to at least one element preceding it. The connected Grundy coloring problem consists of obtaining a connected Grundy coloring maximizing the number of colors.
In this paper, we propose two integer programming (IP) formulations and a local-search enhanced biased random-key genetic algorithm (BRKGA) for the connected Grundy coloring problem. 
The first formulation follows the standard way of partitioning the vertices into color classes while the second one relies on the idea of representatives in an attempt to break symmetries. 
The BRKGA encompasses a local search procedure using a newly proposed neighborhood. A theoretical neighborhood analysis is also presented. 
Extensive computational experiments indicate that the problem is computationally demanding for the proposed IP formulations. Nonetheless, the formulation by representatives outperforms the standard one for the considered benchmark instances. Additionally, our BRKGA can find high-quality solutions in low computational times for considerably large instances, showing improved performance when enhanced with local search and a reset mechanism. Moreover we show that our BRKGA can be easily extended to successfully tackle the Grundy coloring problem, i.e., the one without the connectivity requirements.
\\

\noindent \textbf{Keywords:} Discrete optimization; Graph coloring; Grundy number; Greedy heuristic; BRKGA.
\end{abstract}

\section{Introduction}
\label{sec:introduction}

Graph coloring finds applications in several areas. A few recent examples include communication networks~\citep{ZhuDaiWan15}, video synopsis~\citep{HeGaoSanQuHan17}, and railway station design~\citep{JovPavBelMil20}. First-fit coloring heuristics, especially the connected heuristics (e.g., based on breadth-first search), are widely used to obtain solutions to these problems. Therefore, the Grundy number and the connected Grundy number have significant applicability, as they provide a theoretical worst-case measure of solution quality. It is noteworthy that the connected Grundy coloring problem, whose optimal solution provides the connected Grundy number, is NP-hard~\citep{BenCamDouGriMorSamSil14}.

\subsection{Basic definitions}
\label{sec:basicdefinitions}

Given a graph $G=(V,E)$ and a set of colors $K$, a \textit{vertex coloring}, or simply \textit{coloring}, is a mapping $c:V\rightarrow K$. A \textit{proper coloring} is one in which $c(u) \neq c(v)$ for every $uv\in E$. A \textit{$k$-coloring} is a coloring using exactly $k$ colors, i.e.,  one in which the image of $V$ under $c$ has $k$ elements. 
From now on, consider $|V| = n$ and $|E| = m$.
The \textit{chromatic number} of a graph $G$, $\rchi(G)$, is the minimum $k$ such that $G$ admits a proper $k$-coloring. For simplicity, in the remainder of the paper, a coloring is defined as proper unless stated otherwise.
Besides, consider the \textit{neighborhood} of $v$, $N(v)$, as the set of vertices adjacent to~$v$ and its \textit{anti-neighborhood}, $\bar{N}(v)$, as the set of vertices not adjacent to $v$. Additionally, let ${N}[v] = {N}(v) \cup \{v\}$ be the \textit{closed neighborhood}, and $\bar{N}[v] = \bar{N}(v) \cup \{v\}$ be the \textit{closed anti-neighborhood} of $v$. Let $d(v) = |N(v)|$ be the degree of the vertex $v$ and $\Delta(G)$ be the largest degree in $G$. The \textit{density} of a graph is defined as $dens(G)=\frac{2\times|E|}{|V|\times(|V|-1)}$.

Given a sequence $\sigma = (v_{1}, v_{2}, \ldots, v_{n})$ of the vertices, the \textit{first-fit} coloring heuristic assigns to each vertex $v_i$ the lowest-index color not used for its neighbors in $(v_1, \ldots, v_{i-1})$. A \textit{Grundy coloring} is a coloring that respects the properties of the first-fit heuristic, i.e., if $c(v) = k$ then there is some neighbor $u\in N(v)$ such that $c(u)= k'$ for every $k'<k$. 
The \textit{Grundy number}, $\Gamma(G)$, is the largest $k$ such that $G$ admits a Grundy $k$-coloring. The \textit{Grundy coloring problem} consists of obtaining a Grundy coloring maximizing the number of colors.

A connected sequence $\sigma_c = (v_1, \ldots, v_n)$ is a sequence of vertices ensuring that $v_i$, for $1 \leq i \leq n$, has at least one neighbor in the interval $(v_1,\ldots,v_ {i-1})$. A \textit{connected Grundy coloring} is a coloring that can be obtained over a connected sequence of vertices respecting the properties of the first-fit heuristic. 
The \textit{connected Grundy coloring problem} consists of obtaining a connected Grundy coloring maximizing the number of colors. The optimal solution to the problem is the \textit{connected Grundy number}, $\Gamma_c(G)$. The \textit{connected chromatic number}, $\rchi_c(G)$, denotes the smallest $k$ such that $G$ accepts a $k$-connected coloring that respects the first-fit heuristic property. Note that $\rchi(G) \leq \Gamma_c(G) \leq \Gamma(G) \leq \Delta(G) + 1$.

Figure \ref{fig:con_coloring} illustrates two examples of Grundy coloring, demonstrating the distinction between non-connected and connected colorings. Figure~\ref{fig:n_grundy} illustrates a non-connected Grundy coloring. 
Notice that there is no manner to achieve this coloring in a connected way, because for the vertex $a$ to receive color 4 and $b$ to receive color 3, the vertices in $\{d,c,e,f\}$ need to be colored first and this subset does not induce a connected subgraph. Figure \ref{fig:c_grundy} illustrates a connected Grundy coloring, which can be achieved using the connected sequence $\sigma_c$ to color each vertex respecting the first-fit heuristic property.

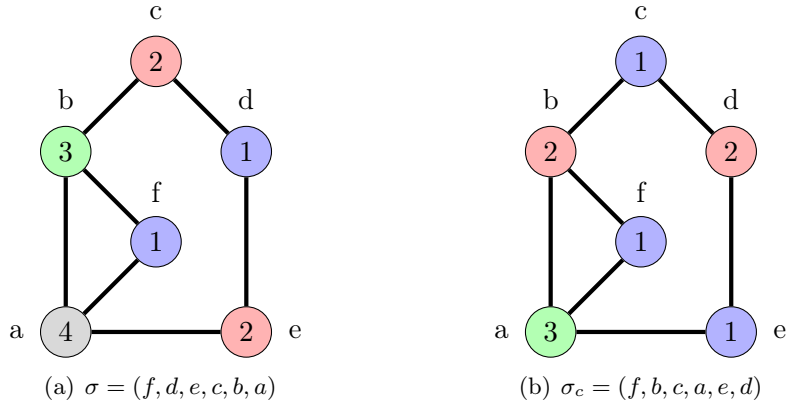
\begin{figure}[!ht]
    \centering
    \subfigure[$\sigma=(f,d,e,c,b,a)$]{
\begin{tikzpicture}
	[scale=1.2,every node/.style={circle,draw=black}]
	\node[label=left:a, fill=gray!30] (0) at  (0, 0)   {$4$};
	\node[label=b, fill=green!30] (1) at  (0, 2)   {$3$};
	\node[label=c, fill=red!30] (2) at  (1, 3)   {$2$};
	\node[label=d, fill=blue!30] (3) at  (2, 2)   {$1$};
        \node[label=right:e, fill=red!30] (4) at  (2, 0)   {$2$};
        \node[label=f, fill=blue!30] (5) at  (1, 1)   {$1$};
    \begin{scope}[line width=1.6pt,every node/.style={}]
	\draw (0) edge (1);
	\draw (1) edge (2);
	\draw (2) edge (3);
        \draw (3) edge (4);
        \draw (4) edge (0);
        \draw (0) edge (5);
        \draw (1) edge (5);
	\end{scope}
 \end{tikzpicture}
 \label{fig:n_grundy}
 }
 \hspace{1.5cm}
 \subfigure[$\sigma_c=(f,b,c,a,e,d)$]{
\begin{tikzpicture}
	[scale=1.2,every node/.style={circle,draw=black}]
	\node[label=left:a, fill=green!30] (0) at  (0, 0)   {$3$};
	\node[label=b, fill=red!30] (1) at  (0, 2)   {$2$};
	\node[label=c, fill=blue!30] (2) at  (1, 3)   {$1$};
	\node[label=d, fill=red!30] (3) at  (2, 2)   {$2$};
        \node[label=right:e, fill=blue!30] (4) at  (2, 0)   {$1$};
        \node[label=f, fill=blue!30] (5) at  (1, 1)   {$1$};
    \begin{scope}[line width=1.6pt,every node/.style={}]
	\draw (0) edge (1);
	\draw (1) edge (2);
	\draw (2) edge (3);
        \draw (3) edge (4);
        \draw (4) edge (0);
        \draw (0) edge (5);
        \draw (1) edge (5);
	\end{scope}
 \end{tikzpicture}
\label{fig:c_grundy}
 }
 \caption{Difference in the resulting colorings when using a non-connected and a connected sequence with the first-fit heuristic.}
    \label{fig:con_coloring}
\end{figure}

Combinatorial upper bounds have been proposed for $\Gamma(G)$. Notice that, as $\Gamma_c(G) \leq \Gamma(G)$, upper bounds for $\Gamma(G)$ are also valid for $\Gamma_c(G)$.
We refer the readers to the papers that follow for further details. 
\citet{Shietal05} defined the \textit{stair factor}, $\zeta(G)$. Given $u \in V(G)$, consider the set $N_{\leq}(u) = \{v \in V(G) : uv \in E(G),\ d_G(v) \leq d_G(u)\}$ and let $\Delta_2(G) = \max_{u\in V(G)} \max_{v\in N_{\leq}(u)} d(v)$. \citet{zaker2008new} demonstrated the validity of the bound $\Delta_2(G) + 1$. Recently, \citet{Silvaetal2024} defined a new upper bound, represented by $\Psi(G)$. The idea is to establish that a vertex can only receive a given color if the degrees of its neighbors form a sequence that allows the vertex to receive such a color. This can be calculated by defining a recursive function $\psi$, where $\psi(v,k)$ contains an upper bound for the largest color less than or equal to $k$ that the vertex $v$ can receive.

\subsection{Related works}

The concept of Grundy number was introduced by \citet{Gru39} in the context of game theory and digraphs \citep{Ber73}. The idea was formally introduced in graph theory by \citet{ChrSel79}, and independently proposed as the \textit{ochromatic number} by \citet{Sim83}. Later, \citet{ErdHarHedLas87} demonstrated that the Grundy and ochromatic numbers are equivalent. Several works focused on the complexity of the Grundy coloring problem for specific graph classes~\citep{hedetniemi1982linear,telle1997algorithms}. \citet{BonFouKimSik18} proved that the Grundy coloring problem can be solved exactly in $O(2.4443^{n})$ time using dynamic programming. Recently, \citet{Silvaetal2024} proposed integer programming (IP) formulations and a biased random-key genetic algorithm (BRKGA) to tackle the problem for general graphs. The authors also established the first benchmark set encompassing well-known instances in the graph coloring literature and new randomly generated graphs.

The connected Grundy coloring problem has been less explored in the literature. Connected Grundy colorings were studied in ~\citet{BenCamDouGriMorSamSil14}, where the authors showed that computing $\Gamma_c(G)$ is NP-hard even for chordal and complement of bipartite graphs. On the other hand, they proved that $\Gamma_c(G)=2$ for bipartite graphs. They also demonstrated that $\rchi_c(G) \leq \rchi(G)+1$ and that determining whether $\rchi_c(G)=\rchi(G)$ is NP-hard. \citet{MotRocSil20} focused on studying when $\rchi_c(G)=\rchi(G)$ and the decision problem $\rchi_c(G) \leq k$ for $H$-free graphs (a graph is $H$-free if it does not contain a copy of $H$ as an induced subgraph). \citet{BonGroMulNarPekWes21} studied connected edge colorings. \citet{BonFouKimSik18} proved that determining whether there is a connected Grundy coloring with $k = 7$ colors is already NP-complete. The authors also highlighted that the connected Grundy coloring problem is solvable in polynomial time when $k \leq 3$, but is still open for $4 \leq k \leq 6$.


\subsection{Main contributions and organization}

We now outline the main contributions of the paper.
We propose two IP formulations for the connected Grundy coloring problem.
The first one uses a standard way of partitioning the vertices into color classes while the second one employs the concept of representatives.
Moreover, partially motivated by the recent successful application of a BRKGA for the Grundy coloring problem~\citep{Silvaetal2024}, we elaborate a BRKGA that employs a new local search procedure. 
Besides, we perform a theoretical analysis of the neighborhood used in our local search.
To the best of our knowledge, our approaches are the first optimization methods for tackling the connected Grundy coloring problem for general graphs.


The remainder of the paper is organized as follows. 
Section~\ref{sec:formulations} describes the IP formulations.
Section~\ref{sec:brkga} details the proposed BRKGA.
Section~\ref{sec:localsearch} provides details of the employed local search procedure.
Section~\ref{sec:experiments} summarizes the performed computational experiments.
Section~\ref{sec:adaptinggrundy} shows how to adapt our newly proposed BRKGA to tackle the Grundy coloring problem.
Section~\ref{sec:conclusions} discusses some concluding remarks.
\section{Integer programming formulations}
\label{sec:formulations}


In this section, we present two IP formulations for the connected Grundy coloring problem. IP formulations were successfully used for solving several graph optimization problems~\citep{FurMalSan18, deFDiaMacSzw21,  MarMelRibSan22, MelRib23}. In what follows, Section~\ref{sec:formgrundycoloring} presents the standard formulation, while
Section~\ref{sec:repformulations} describes the formulation by representatives.

In the remainder of the paper, we will denote the set of vertices by $V=\{1,\ldots,n\}$. Furthermore, consider the sequence of available colors as $K= (1,\ldots,\min\{\zeta(G),\Psi(G)$, $\Delta_2(G)+1\})$, the sequence of possible colors for vertex $v$ by $K_v = (1,\ldots,\min\{\zeta(G),\psi(v,\Delta(G)+1),\Delta_2(G)+1\})$, and the sequence of possible colors of the vertex $v$ at time $t$ by $K_{vt} = \{k' \in K_v \ | \ k' \leq t\}$. We remind that the used bounds were mentioned in Section~\ref{sec:basicdefinitions}. 
Finally, we define $V_k = \{v \in V \ | \ k \in K_v \}$ as the set of vertices that can receive the $k$-th color. The proposed formulations use, for some variables, an index representing the time in which a vertex is colored. This time is associated with the position of the vertex in the determined connected sequence, i.e., a vertex colored at time $t$ is the $t$-th element in the sequence.

\subsection{Standard formulation}
\label{sec:formgrundycoloring}

To formulate the connected Grundy coloring problem as an integer program, consider the following decision variables: 

\[z_{vkt} = \left\{   
   \begin{array}{l}
      1, \textrm{ if vertex $v \in V$ receives color $k \in K_{vt}$ in time $t \in T$,}\\      
      0, \textrm{ otherwise;}
   \end{array}
   \right.
\]
\[ w_{k} = \left\{
   \begin{array}{l}
      1, \textrm{ if color $k \in K$ is used,}\\      
      0, \textrm{ otherwise.}
   \end{array}
   \right.
\]
Thus, the problem can be cast as:
\begingroup
\allowdisplaybreaks
\begin{flalign}
& \max  \sum_{k \in K} w_{k} & \label{cgru:obj}\\
& \sum_{\substack{ t \in T, \\ t \geq k}} z_{ukt} + \sum_{\substack{ t \in T, \\ t \geq k}} z_{vkt} \leq w_k,  \qquad \forall \ k \in K_v \cap K_u, \ uv \in E, & \label{cgru:0a} \\
& \sum_{t \in T} \sum_{k \in K_{vt}} z_{vkt} = 1,  \qquad \forall \ v \in V, & \label{cgru:0b} \\
& w_{k} \leq \sum_{v \in V} \sum_{\substack{ t \in T, \\ t \geq k}} z_{vkt} ,  \qquad \forall \ k \in K, & \label{cgru:0c} \\
&  \sum_{t' \in \{k',\ldots,t\}} z_{vk't'} \leq \sum_{\substack{ u \in N(v), \\ u \in V_k}} \sum_{t' \in \{k,\ldots,t-1\}} z_{ukt'},  \qquad \forall \ v \in V, \ k,k' \in K_v, t \in T\setminus\{1\}, \textrm{ with } k < k', \label{cgru:0d}\\
& \sum_{v \in V} \sum_{k \in K_{vt}} z_{vkt} = 1,  \qquad \forall \ t \in T, & \label{cgru:02} \\
& \sum_{k \in K_{vt}} z_{vkt} \leq \sum_{u \in N(v)} \sum_{t' =1}^{t-1} \sum_{k \in K_{ut'}} z_{ukt'},  \qquad \forall \ v \in V,\ t \in T\setminus\{1\}, & \label{cgru:bb} \\
& w_{k} \in \{0,1\},  \qquad \forall \ k \in K. & \label{cgru:09} \\ 
& z_{vkt} \in \{0,1\},  \qquad \forall \ v \in V, \ t \in T, \ k\in K_{vt}. & \label{cgru:07}
\end{flalign}
\endgroup

The objective function~\eqref{cgru:obj} maximizes the total number of colors used. Constraints~\eqref{cgru:0a} ensure that adjacent vertices do not receive the same color.
Constraints \eqref{cgru:0b} guarantee that each vertex receives exactly one color in a single period.
Constraints \eqref{cgru:0c} determine that $w_k$ is only set to one if color $k$ is used.
Constraints \eqref{cgru:0d} guarantee the Grundy property.
Notice that they imply that if a vertex $v\in V$ receives a color in periods one up to $t$, all the colors with lower index need to be used in the neighborhood of $v$ in periods one up to $t-1$. 
Constraints~\eqref{cgru:02} establish that a single vertex receives a color in each period.
Constraints~\eqref{cgru:bb} ensure that the coloring is connected.
Constraints~\eqref{cgru:09}-\eqref{cgru:07} define integrality requirements on the variables.
\subsection{Formulation by representatives}
\label{sec:repformulations}

Formulations by representatives~\citep{Camp04,FroMacNorRib2010} have been applied to successfully tackle graph coloring and other partitioning problems~\citep{Camp05,Bah14,MelRib15,MelQueSan21,MelRibRiv22}.
In what follows, we describe an IP formulation by representatives for the connected Grundy coloring problem.
Consider the following decision variables:
\[Z_{vut} = \left\{   
   \begin{array}{l}
      1, \textrm{ if vertex $u \in V$ is represented by vertex $v \in \bar{N}[u]$ in time $t \in T$, for $v \leq u$,}\\      
      0, \textrm{ otherwise;}
   \end{array}
   \right.
\]
\[ y_{vu} = \left\{
   \begin{array}{l}
      1, \textrm{ if vertices $v,u \in V$ are representatives and the color of $v$ precedes that of $u$, for $v\neq u$,}\\      
      0, \textrm{ otherwise.}
   \end{array}
   \right.
\]
An IP formulation by representatives for the problem can be defined as:
\begingroup
\allowdisplaybreaks
\begin{flalign}
& \max  \sum_{v \in V} \sum_{t\in T}Z_{vvt} &  \label{repcon:obj}\\
&  \sum_{t\in T}Z_{uvt} + \sum_{t\in T}Z_{uwt} \leq \sum_{t\in T}Z_{uut}, \qquad \forall \ u \in V, \ v,w \in \bar{N}[u],\ \textrm{ s.t. } vw \in E \textrm{ and } u \leq v < w, & \label{repcon:01} \\
&  \sum_{t\in T}Z_{uvt} \leq \sum_{t\in T}Z_{uut}, \qquad \forall \ u \in V,\ v \in \bar{N}(u), \textrm{ s.t. } N(v) \cap \bar{N}(u) = \emptyset \textrm{ and } u < v,  & \label{repcon:01b} \\
& \sum\limits_{ \substack{v \in \bar{N}[u], \\ v \leq u }}\sum_{t\in T} Z_{vut}  = 1, \qquad \forall \ u \in V, & \label{repcon:02} \\
&  \sum_{t'\in \{1,\ldots,t\}}Z_{uvt'} \leq \sum\limits_{\substack{w \in N(v) \cap \bar{N}[p], \\ p \leq w }} \sum_{t'\in \{1,\ldots,t-1\}}Z_{pwt'} + 1 - y_{pu}, \qquad \forall \ u,p \in V, \ v \in \bar{N}[u],\ & \nonumber \\
&  \begin{tabular}{@{}r@{}} $t \in T\setminus\{1\},\ \textrm{ s.t. }  p \neq u \textrm{ and } \ u \leq v,$\end{tabular} & \label{repcon:03} \\
& \sum_{\substack{u \in \bar{N}[v],\\ u\leq v}} Z_{uvt} \leq \sum_{v' \in N(v)}\sum_{\substack{u \in \bar{N}[v'],\\u\leq v'}} \sum_{t =1}^{t-1} Z_{uv't},  \qquad \forall \ v \in V,\ t \in T\setminus\{1\}, & \label{repcon:bb} \\
&  y_{vu} + y_{uv} \geq \sum_{t\in T}Z_{uut} + \sum_{t\in T}Z_{vvt} - 1, \qquad \forall \ u,v \in V, \ \textrm{ s.t. } u < v, & \label{repcon:04} \\
&  y_{uv} + y_{vu} \leq \sum_{t\in T}Z_{uut}, \qquad \forall \ u,v \in V, \ \textrm{ s.t. } u\neq v, & \label{repcon:05} \\
& \sum_{u \in V}\sum_{t\in T}Z_{uut} \leq \min(\Psi(G),\zeta(G),\Delta_2(G)+1), & \label{repcon:09} \\
&  Z_{uvt} \in \{0,1\}, \qquad \forall \ u \in V,\ v\in \bar{N}[u],\ t \in T,\ \textrm{ s.t. } u \leq v,  & \label{repcon:07} \\
&  y_{uv} \in \{0,1\}, \qquad \forall \ u,v \in V,\ \textrm{ s.t. }u\neq v. & 
\label{repcon:08}
\end{flalign}
\endgroup
The objective function \eqref{repcon:obj} maximizes the number of representative vertices.
Constraints \eqref{repcon:01} ensure that adjacent vertices do not receive the same color.
Constraints \eqref{repcon:01b} indicate that a vertex can only represent another one if the former is a representative.
Constraints \eqref{repcon:02} guarantee that every vertex receives a color in a single period.
Constraints \eqref{repcon:03} imply the Grundy property. They establish that if $p,u \in V$ are representatives, $p$ precedes $u$, and $u$ represents $v\in \bar{N}[u]$, then $p$ has to represent one of the neighbors of $v$ before $u$ can represent $v$.
Constraints~\eqref{repcon:bb} ensure that the coloring is connected.
Constraints \eqref{repcon:04}-\eqref{repcon:05} ensure an order between two vertices if and only if they are both representatives. Constraint \eqref{repcon:09} limits the total number of representative vertices.
Constraints \eqref{repcon:07}-\eqref{repcon:08} determine the integrality of the variables.

Figure~\ref{fig:rep_coloring_example} illustrates a coloring and its corresponding formulation by representatives' solution.

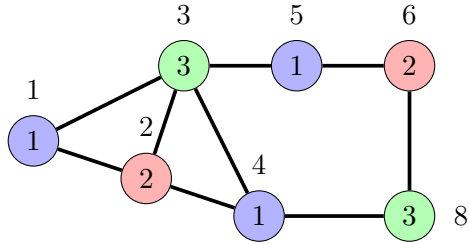
\begin{figure}[!ht]
    \centering
    \begin{tikzpicture}
	[scale=1.0,every node/.style={circle,draw=black}]
	\node[label=1, fill=blue!30, minimum size=0.6cm] (1) at  (0, 2)   {$1$};
	\node[label=2, fill=red!30, minimum size=0.6cm] (3) at  (1.5, 1.5)   {$2$};
        \node[label=3, fill=green!30, minimum size=0.6cm] (4) at  (2, 3)   {$3$};
        \node[label=4, fill=blue!30, minimum size=0.6cm] (5) at  (3, 1)   {$1$};
        \node[label=5, fill=blue!30, minimum size=0.6cm] (6) at  (3.5, 3)   {$1$};
        \node[label=6, fill=red!30, minimum size=0.6cm] (7) at  (5, 3)   {$2$};
        \node[label=right:8, fill=green!30, minimum size=0.6cm] (9) at  (5, 1)   {$3$};
    \begin{scope}[line width=1.4pt,every node/.style={}]
	\draw (1) edge (3);
        \draw (1) edge (4);
        \draw (3) edge (4);
        \draw (3) edge (5);
        \draw (4) edge (5);
        \draw (6) edge (7);
        \draw (6) edge (4);
        \draw (9) edge (7);
        \draw (9) edge (5);
	\end{scope}
 \end{tikzpicture}
    \caption{Coloring of a graph corresponding to the solution with the only non-zero values \mbox{$Z_{111} = Z_{222} = Z_{333} = Z_{144} = Z_{155}  = Z_{266} = Z_{277} = Z_{388} = 1$}, $Y_{12} = Y_{13} = Y_{23} = 1$. The external label represents the vertex index, while the internal label represents the color assigned to that vertex.}
    \label{fig:rep_coloring_example}
\end{figure}

\section{Biased random-key genetic algorithm}
\label{sec:brkga}


The BRKGA~\citep{gonccalves2011biased} is a general search metaheuristic designed to obtain high-quality solutions for challenging optimization problems. BRKGAs have been successfully applied to several problems~\citep{resende2012,GonWas20,johnson2020,HomFonGon23,SilRibSou23,MelRibRiv23}. We refer the reader to \citet{LonPesAndREs24a,LonPesAndRes24b} for recent surveys of the BRKGA literature.
Differently from standard genetic algorithms, BRKGAs make the representation and the intensification-diversification mechanism independent of the specific problem at hand.


Regarding the \textit{representation}, BRKGA solutions correspond to the characterization initially proposed for random-key genetic algorithms~\citep{bean1994genetic}. Namely, the solutions are represented as a vector of randomly generated real numbers in the interval $[0, 1)$, denoted by keys. Each random-key vector encodes a single solution to the problem. When it comes to the \textit{intensification},
in the mating process that generates the subsequent generation of solutions, one parent is always an elite solution, while the other is a non-elite. Besides, the elite parent has a higher probability of passing its characteristics (defined by its keys) to the offspring. The algorithm uses the parametric uniform crossover strategy~\citep{Spe91} to combine the two parent solutions when generating a new offspring.
With respect to the \textit{diversification}, a BRKGA introduces new random solutions, denoted by mutants, to the population in each generation. This is an attempt to prevent early convergence by enabling the algorithm to escape from local optima areas of the solution space.

As previously mentioned, BRKGA is a metaheuristic that can find high-quality solutions for the Grundy coloring problem \citep{Silvaetal2024}, but it tends to converge quickly. 
To address this, we enhance the BRKGA framework considered in the study of the Grundy coloring problem \citep{Silvaetal2024} by incorporating a reset mechanism and a local search procedure: the reset starts a new search, potentially exploring a different region of the solution space, while the local search utilizes available time to examine the neighborhood of a good solution in the current population, aiming to improve the best solution found.

Figure~\ref{fig:brkga-diagram} illustrates the flowchart of the local-search enhanced BRKGA framework used in our work. The initial population, denoted as $P$ (with $p$ initial solutions), consists entirely of randomly-generated random-key vectors. At each generation, the population is sorted according to the solutions' objective values. A fraction $p_e$ of the population is denominated ELITE, while the remaining ones are classified as NON-ELITE (see Figure \ref{fig:brkga_evolve}). The evolution phase between two generations works as follows: the elite population is entirely copied to the next generation, a percentage $p_m$ of the new population is formed by mutants, and the remaining portion is composed of new solutions generated using the \textit{intensification} strategy. If a new best solution is found in a generation, then a local search (see Section~\ref{sec:localsearch}) is performed on a number $b$ of solutions. After that, $b$ random-key vectors corresponding to the solutions obtained in the local search are created to replace the $b$ worst solutions of that generation. If $g_{lim}$ generations pass without improving the best-known solution, then the population is restarted keeping only the best-known solution for the next generation.

\begin{figure}[!ht]
    \centering
\includegraphics[height=11cm]{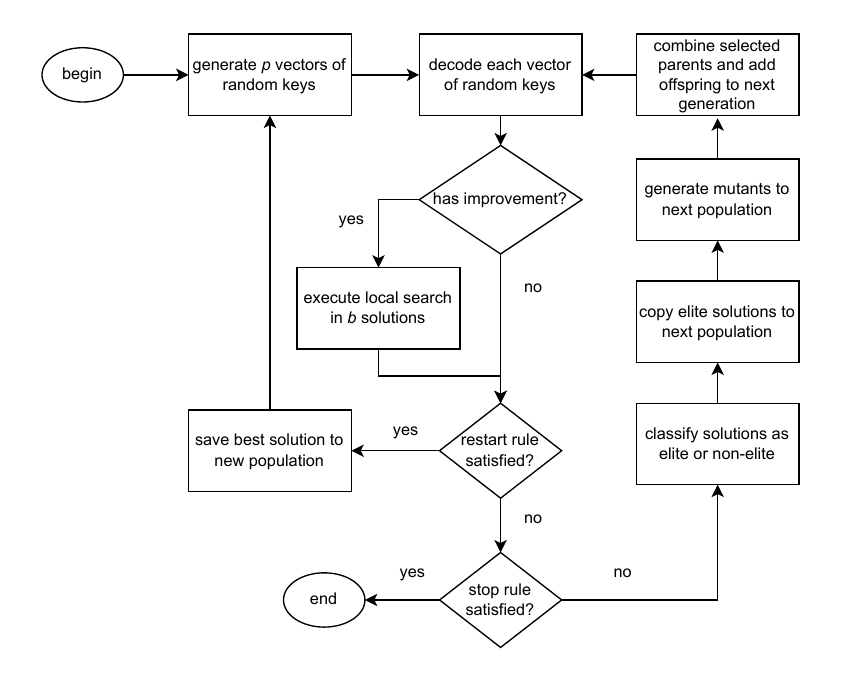}
    \caption{Framework of the BRKGA enhanced with local search.}
    \label{fig:brkga-diagram}
\end{figure}

\begin{figure}[!ht]
    \centering
    \includegraphics[height=8cm]{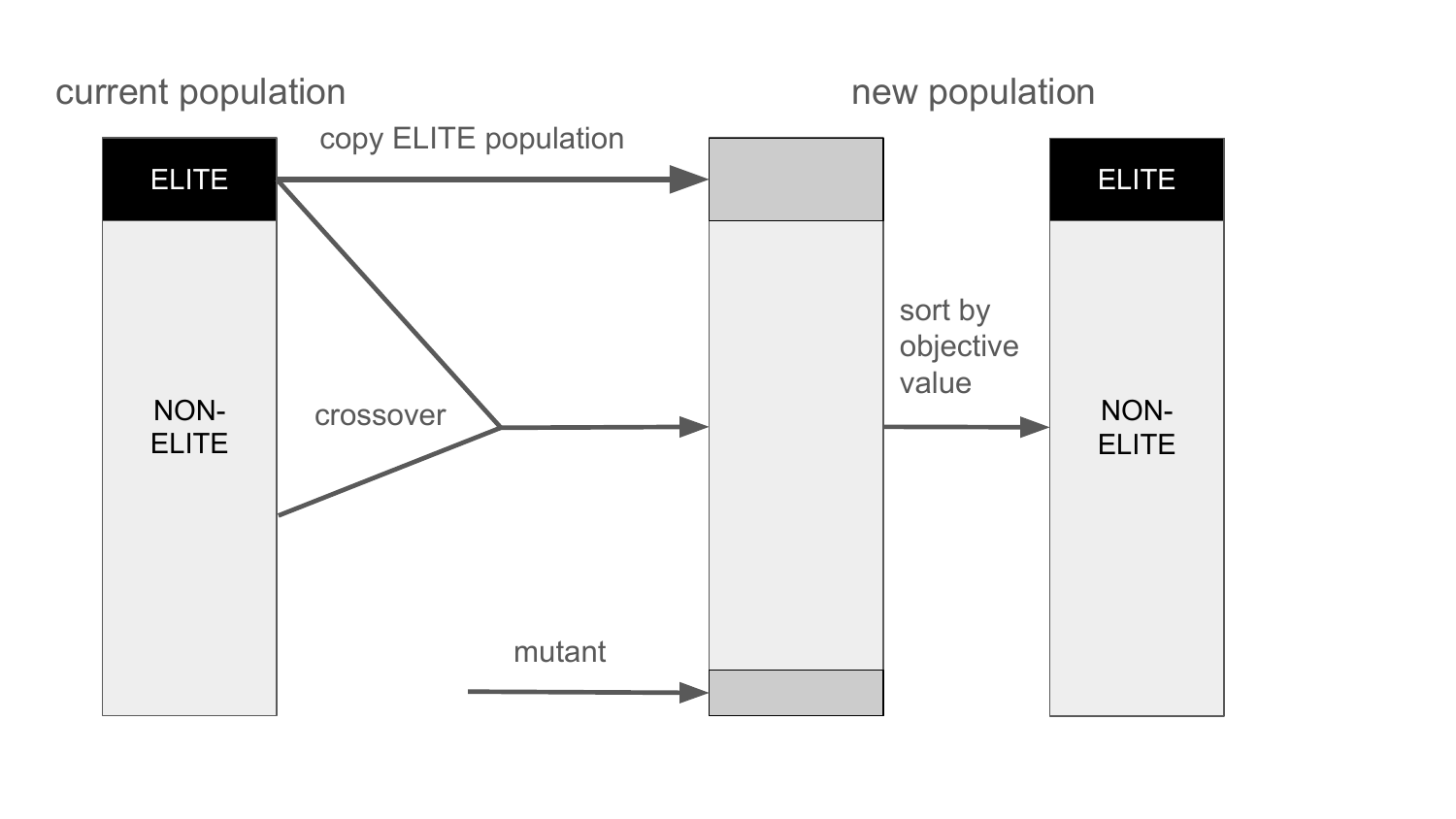}
    \caption{Population evolution between two generations of a BRKGA.}
    \label{fig:brkga_evolve}
\end{figure}

We refer the reader to \cite{gonccalves2011biased} for additional details of the framework. Notice that our BRKGA explanation is simplified to focus on the most relevant details and to highlight our contributions. 
A primary component of a BRKGA implementation is its \emph{decoder}, a deterministic algorithm responsible for mapping the random-key vector to a feasible solution for the problem at hand and computing its objective value.
In addition to the decoder, the BRKGA requires and is guided by the following parameters: the population size ($p$), the size of the elite population ($p_e$), the size of the mutant population ($p_m$), the elite inheritance probability ($\rho_e$), and the number of generations ($n_g$) or some other stopping criterion (such as maximum time), and the maximum number of generations without improvements before restarting ($g_{lim}$).


\subsection{Solution encoding}

Each solution is encoded as a random-key vector $x$ with length $\Upsilon = |V|$. In such a vector, the
$i$-th key represents the $i$-th vertex in $V$.

\subsection{Decoder}
\label{subsec:decoder}

The decoder defines the vertex coloring order considering the $i$-th key in the random-key vector representing a solution as the priority of the $i$-th vertex to be selected while respecting the connectivity constraint of the problem. 
Algorithm~\ref{alg:decoder_con_grundy} describes the decoder, which receives as inputs the graph $G$ and a random-key vector $x$. 
The algorithm utilizes a priority queue $Q$ based on the random keys $x_v$. The auxiliary procedure \mbox{Enqueue($Q$,$v$)} inserts the vertex $v$ into the priority queue $Q$ according to its key $x_v$. The Dequeue($Q$) method removes and returns the element with the highest key from $Q$.

In Algorithm~\ref{alg:decoder_con_grundy},
line~\ref{decc:init} initializes the color of all vertices as -1, indicating that they were not yet reached.
The vertex with the highest key is added to the priority queue $Q$ (lines~\ref{decc:maiorchave}-\ref{decc:enfileiramaiorchave}).
The loop of lines~\ref{decc:mainLoop}-\ref{decc:fimMainLoop} repeats the process of selecting the vertex with the highest key from the priority queue (line~\ref{decc:desenfileira}) and coloring it with the lowest possible color following the first-fit algorithm with the auxiliary procedure Color-Vertex (line~\ref{decc:colorefirstfit}). 
Color-Vertex sets the color of a vertex as the lowest-index color that has not yet been used for any of its neighbors.
Subsequently, its neighbors that have not yet been added to the priority queue are inserted into $Q$ (lines~\ref{decc:lacovizinhos}-\ref{decc:enfileravizinho}) and their colors are set to 0, representing that they were reached but are not yet colored.
Line \ref{decc:return} returns the vector of colors and the total number of used colors.


\begin{algorithm}[!ht]
\small
\caption {Decoder-Connected-Grundy ($G$, $x$) }
\label{alg:decoder_con_grundy}
    $colors \leftarrow \{-1,\ldots, -1\}$\; \label{decc:init}
    $v \leftarrow $ vertex with highest key value $x_v$\; \label{decc:maiorchave}
    Enqueue($Q$, $v$)\; \label{decc:enfileiramaiorchave}
    \While{$Q \neq \varnothing$} { \label{decc:mainLoop}
        $v \leftarrow$ Dequeue($Q$)\; \label{decc:desenfileira}
        $colors[v] \leftarrow$ Color-Vertex($G$, $v$, $colors$)\; \label{decc:colorefirstfit}
        \ForEach{$u \textit{ in } N(v)$}{\label{decc:lacovizinhos}
            \If{$colors[u] = -1$}{
                $colors[u] = 0$\;
                Enqueue($Q$, $v$)\; \label{decc:enfileravizinho}
            }
        }
    } \label{decc:fimMainLoop}
    \Return $colors$, $max_{v\in V}colors[v]$\; \label{decc:return}
\end{algorithm}

Figure~\ref{fig:example_decoder} illustrates the decoder's working mechanism. Figure~\ref{fig:decoder_randomkeys} depicts a random-key vector and the resulting connected sequence associated with the graph in Figure~\ref{fig:decoder_graph}. Figure~\ref{fig:decoder_graph} displays the resulting coloring. The first vertex selected is the one with the largest key ($f$), which receives color 1 by the first-fit heuristic. Next, the vertex $e$ is selected, which has the largest key among the neighbors of $f$. After that, vertex $c$ is the one with the largest key among the neighbors of $\{e,f\}$. Then, the vertex that has the largest associated key among the neighbors of those that have already been selected, $\{c,e,f\}$, is considered. Thus, vertex $a$ receives color 2. Finally, $b$ and $d$ are selected, in this order, based on their keys since the two are neighbors of at least one vertex that has already been selected.

\begin{figure}[!ht]
   \centering
   \subfigure[Random-key vector and the corresponding connected sequence generated by the decoder.]{
    \includegraphics[height=4cm]{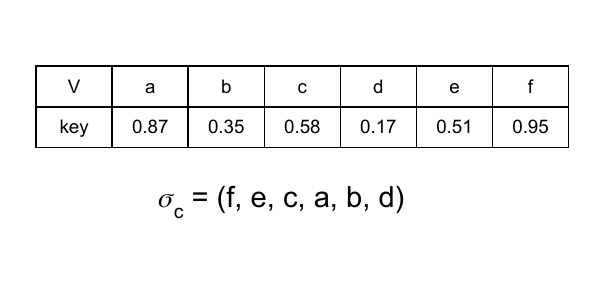}
   \label{fig:decoder_randomkeys}
   }
   \hspace{1cm}
    \subfigure[Colored input graph from the decoder execution on the random-key vector on the left.]{
\begin{tikzpicture}
	[scale=1.3,every node/.style={circle,draw=black}]
	\node[label=below:a, fill=red!30] (0) at  (0, 0)   {$2$};
	\node[label=below:b, fill=blue!30] (1) at  (1, 0)   {$1$};
	\node[label=c, fill=blue!30] (2) at  (0, 2)   {$1$};
	\node[label=right:d, fill=green!30] (3) at  (1, 1)   {$3$};
    \node[label=e, fill=red!30] (4) at (1,2)   {$2$};
    \node[label=f, fill=blue!30] (5) at (2,2)   {$1$};
    \begin{scope}[line width=1.6pt,every node/.style={}]
	\draw (0) edge (1);
	\draw (0) edge (2);
	\draw (0) edge (3);
        \draw (2) edge (3);
        \draw (2) edge (4);
        \draw (4) edge (5);
	\end{scope}
 \end{tikzpicture}
 \label{fig:decoder_graph}
 }
    \caption{Example of a random-key vector and the connected sequence generated by the decoder. On the right, the resulting coloring of the graph.}
    \label{fig:example_decoder}
\end{figure}

\begin{proposition}
    Algorithm~\ref{alg:decoder_con_grundy} can be implemented to run in $O(|V|\log |V| + |E|)$.
\end{proposition}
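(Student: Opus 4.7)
The plan is a straightforward accounting of the per-operation costs after fixing suitable data structures, then summing over all iterations of the main loop. I will assume $G$ is stored in adjacency-list form, that $Q$ is implemented as a binary heap keyed on the random-key values $x_v$, and that the array $colors$ doubles as the ``reached'' flag (via the sentinel $-1$).

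First I would dispose of the lines outside the main loop. Line~\ref{decc:init} costs $O(|V|)$ to initialize the color array; line~\ref{decc:maiorchave} costs $O(|V|)$ to scan for the maximum key; line~\ref{decc:enfileiramaiorchave} costs $O(\log |V|)$ on a binary heap. These contribute a total of $O(|V|)$.

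Next I would analyze the main loop of lines~\ref{decc:mainLoop}--\ref{decc:fimMainLoop}. Because each vertex has its color set from $-1$ to a nonnegative value exactly once (either to $0$ when first reached as a neighbor, or to its final color when dequeued), each vertex is enqueued at most once and dequeued at most once. Thus, the total cost of all Enqueue and Dequeue calls is $O(|V|\log|V|)$. The neighbor-scan in lines~\ref{decc:lacovizinhos}--\ref{decc:enfileravizinho} visits each edge at most twice over the life of the algorithm (once from each endpoint when that endpoint is dequeued), so the total cost of these scans, excluding the enqueues already accounted for, is $O(|E|)$.

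The subtlety—and what I expect to be the main obstacle to state carefully—is the implementation of Color-Vertex in line~\ref{decc:colorefirstfit}. The naive approach of allocating and clearing a Boolean array of length $\Delta(G)+1$ at every call would cost $O(|V|\Delta(G))$ overall, which is too much. Instead, I would maintain a single global array $used[\,\cdot\,]$ of length $\Delta(G)+1$ that stores, for each color $k$, a timestamp equal to the index of the most recent vertex for which color $k$ was seen in its neighborhood. Then, to color vertex $v$, I scan $N(v)$ in $O(d(v))$ time writing $v$'s timestamp into $used[colors[u]]$ for each already-colored neighbor $u$; afterwards I scan $k = 1, 2, \ldots$ and return the first $k$ whose timestamp is not $v$. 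Since the returned color is at most $d(v)+1$, this second scan is also $O(d(v))$. Summing over all vertices gives $\sum_{v \in V} O(d(v)) = O(|E|)$ for all calls to Color-Vertex.

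Adding the contributions yields a total running time of $O(|V|) + O(|V|\log|V|) + O(|E|) + O(|E|) = O(|V|\log|V| + |E|)$, as claimed.
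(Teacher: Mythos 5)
Your proof is correct and follows the same decomposition as the paper's: each vertex is enqueued and dequeued exactly once, giving $O(|V|\log|V|)$ for heap operations, and the remaining adjacency-list work totals $O(|V|+|E|)$. The only difference is that you explicitly justify the $O(d(v))$ cost of Color-Vertex via a global timestamped $used$ array, a detail the paper's proof leaves implicit; this is a welcome addition rather than a different approach.
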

\begin{proof}
First, note that each vertex $v \in V$ enters and exits the priority queue $Q$ exactly once. Therefore, all operations related to $Q$ are performed in $O(|V|\log|V|)$. The while loop of lines~\ref{decc:mainLoop}-\ref{decc:fimMainLoop} involves traversing the graph's adjacency list to determine the color and enqueue the neighbors of each vertex. Thus, excluding the operations related to the priority queue, which have already been accounted for in the computational cost, the remaining operations can be performed in $O(|V| + |E|)$. Consequently, Algorithm~\ref{alg:decoder_con_grundy} can be implemented to run in $O(|V|\log |V| + |E|)$.
\end{proof}


\section{Local search}
\label{sec:localsearch}

In this section, we detail the local search procedure employed in our BRKGA.
We consider that a solution $S$ is represented by a connected vertex sequence $\sigma_c = (v_1,\ldots,v_n)$.
We say that a solution $S$ leads to, or produces, a connected Grundy coloring.
Let $c_S(v_i)$ be the color in the connected Grundy coloring led by solution $S$ corresponding to the vertex in it's $i$-th position, $1\leq i \leq n$. Notice that there may be multiple solutions that lead to the same coloring.

In what follows, Section~\ref{sec:neighborhood} introduces the used neighborhood.
Section~\ref{sec:neighborhoodanalysis} provides a theoretical analysis of the neighborhood.
Section~\ref{sec:NeighborhoodAlgorithm} describes the local search algorithm.
\subsection{Neighborhood}
\label{sec:neighborhood}

Given a solution $S$, define the \textit{move} operation as one that changes only the relative position of a single vertex $v_i$ with respect to the others to generate a new connected sequence. A neighbor $S'$ of $S$ is a solution that can be obtained from $S$ by applying a move operation.
Figure \ref{fig:neigh_example} illustrates some neighbors of a given solution. Note that not every move changes the resulting coloring, as exemplified in Figure \ref{fig:neigh2}. Furthermore, changes in the coloring may not lead to an increase in the total number of colors, as is the case of all the neighbors of the original solution in the example.  Consider the neighborhood $\mathcal{N}(S)$ of a solution $S$ as the set comprising all its neighbors. In other words,  $\mathcal{N}(S)$ is formed by all the solutions that can be obtained with a move operation over the sequence $S$. 

Note that we are only considering move operations that generate connected sequences. The neighborhood size can vary greatly depending on the graph structure. For a complete graph, it is possible to move any vertex to any position, in which case $|\mathcal{N}(S)| = O(V^2)$. On the other hand, for a path, if the first vertex in the sequence is one of the endpoints, then $|\mathcal{N}(S)| = 1$, if it is any other vertex $|\mathcal{N}(S)| = 2$. Section~\ref{sec:neighborhoodanalysis} discusses move operations that do not change the resulting coloring and the difficulty implied by the problem in which in some cases it may be necessary to completely recolor the graph.

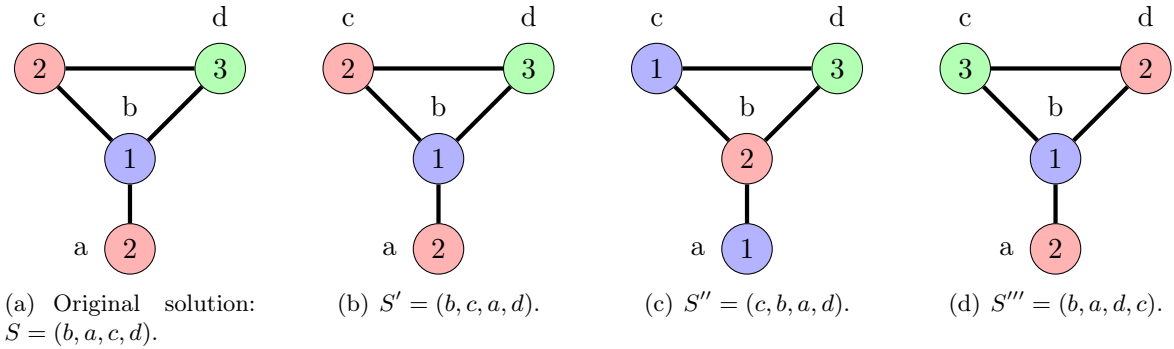
\begin{figure}[!ht]
    \centering
    \subfigure[Original solution: $S=(b,a,c,d)$.]{
\begin{tikzpicture}
	[scale=1.2,every node/.style={circle,draw=black}]
	\node[label=left:a, fill=red!30] (0) at  (0, 0)   {$2$};
	\node[label=b, fill=blue!30] (1) at  (0, 1)   {$1$};
	\node[label=c, fill=red!30] (2) at  (-1, 2)   {$2$};
	\node[label=d, fill=green!30] (3) at  (1, 2)   {$3$};
    \begin{scope}[line width=1.6pt,every node/.style={}]
	\draw (0) edge (1);
	\draw (1) edge (2);
	\draw (1) edge (3);
        \draw (2) edge (3);
	\end{scope}
 \end{tikzpicture}
 \label{fig:neigh1}
 }
 \hspace{0.5cm}
 \subfigure[$S'=(b,c,a,d)$.]{
\begin{tikzpicture}
	[scale=1.2,every node/.style={circle,draw=black}]
	\node[label=left:a, fill=red!30] (0) at  (0, 0)   {$2$};
	\node[label=b, fill=blue!30] (1) at  (0, 1)   {$1$};
	\node[label=c, fill=red!30] (2) at  (-1, 2)   {$2$};
	\node[label=d, fill=green!30] (3) at  (1, 2)   {$3$};
    \begin{scope}[line width=1.6pt,every node/.style={}]
	\draw (0) edge (1);
	\draw (1) edge (2);
	\draw (1) edge (3);
        \draw (2) edge (3);
	\end{scope}
 \end{tikzpicture}
 \label{fig:neigh2}
 }
 \hspace{0.5cm}
 \subfigure[$S''=(c,b,a,d)$.]{
\begin{tikzpicture}
	[scale=1.2,every node/.style={circle,draw=black}]
	\node[label=left:a, fill=blue!30] (0) at  (0, 0)   {$1$};
	\node[label=b, fill=red!30] (1) at  (0, 1)   {$2$};
	\node[label=c, fill=blue!30] (2) at  (-1, 2)   {$1$};
	\node[label=d, fill=green!30] (3) at  (1, 2)   {$3$};
    \begin{scope}[line width=1.6pt,every node/.style={}]
	\draw (0) edge (1);
	\draw (1) edge (2);
	\draw (1) edge (3);
        \draw (2) edge (3);
	\end{scope}
 \end{tikzpicture}
\label{fig:neigh3}
 }
 \hspace{0.5cm}
 \subfigure[$S'''=(b,a,d,c)$.]{
\begin{tikzpicture}
	[scale=1.2,every node/.style={circle,draw=black}]
	\node[label=left:a, fill=red!30] (0) at  (0, 0)   {$2$};
	\node[label=b, fill=blue!30] (1) at  (0, 1)   {$1$};
	\node[label=c, fill=green!30] (2) at  (-1, 2)   {$3$};
	\node[label=d, fill=red!30] (3) at  (1, 2)   {$2$};
    \begin{scope}[line width=1.6pt,every node/.style={}]
	\draw (0) edge (1);
	\draw (1) edge (2);
	\draw (1) edge (3);
        \draw (2) edge (3);
	\end{scope}
 \end{tikzpicture}
\label{fig:neigh4}
 }
 \caption{Example of the solutions obtained by applying a move operation on vertex $c$ in the original solution $S$.}
    \label{fig:neigh_example}
\end{figure}
\subsection{Neighborhood analysis}
\label{sec:neighborhoodanalysis}

In this section, we examine the neighborhood defined in Section~\ref{sec:neighborhood}.  We identify cases in which moving a vertex does not create a new coloring and cases where it may generate a coloring with more colors. Additionally, we explore properties arising from the problem characteristics to reduce the neighborhood size and make the search more efficient.

Define a function $p_{S}: V \rightarrow \{1,\ldots,|V|\}$ that maps each vertex to its position in the solution $S$. Define a function $f_{S}: V \rightarrow \{1,\ldots,|V|\}$ that maps each vertex to the position of its first neighbor in the solution $S$ and a function $fc_{S}: V \rightarrow \{1,\ldots,|V|\}$ that maps, for each vertex $u$ in the solution $S$, the total number of neighbors that appear before $u$. We assume that a query in this function can be done in $O(1)$. More information on how to keep a structure representing such a function will be provided in Section~\ref{sec:NeighborhoodAlgorithm}. Given two solutions $S$ and $S'$, we say that a subset $V' \subseteq V$ has its relative order unchanged if and only if $\forall u,v \in V'$ if $p_{S}(v) < p_{S}(u)$ implies that $p_{S'}(v) < p_{S'}(u)$. In such a case, we also say that the sequences $S$ and $S'$ are {\em stable} with respect to $V'$, or that they {\em agree} with respect to $V'$.


\begin{lemma}
\label{lemma:samecolor}
    Consider a solution $S$, a vertex $v$, and a solution $S'$ obtained by the move operation applied to the vertex $v$. If $p_{S}(v) < p_{S'}(v)$, every vertex in the interval $[1,p_{S}(v))$ in the coloring produced by $S'$ will have the same color as that in the coloring produced by $S$. If $p_{S}(v) > p_{S'}(v)$, every vertex in the interval $[1,p_{S'}(v))$ in the coloring produced by $S'$ will have the same color as that produced by $S$.
\end{lemma}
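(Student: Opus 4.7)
The plan is to show that the sequences $S$ and $S'$ coincide on a common prefix, and then argue by induction that the first-fit heuristic produces identical colors on this prefix. Let $q = \min(p_{S}(v), p_{S'}(v))$. I would first verify that the prefixes of $S$ and $S'$ of length $q-1$ are identical as sequences of vertices. In the case $p_{S}(v) < p_{S'}(v)$, the vertex $v$ sits at position $q = p_{S}(v)$ in $S$ and at a later position in $S'$, so positions $1, \ldots, q-1$ in neither sequence contain $v$. Since the move operation affects only $v$, the relative order of all other vertices is preserved, so the two prefixes consist of the same vertices in the same order. The symmetric case $p_{S}(v) > p_{S'}(v)$ is handled identically, with $q = p_{S'}(v)$.

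Given this common prefix, I would proceed by induction on $i$, for $1 \leq i < q$, to establish that the color assigned to the vertex at position $i$ is the same in the coloring produced by $S$ as in the coloring produced by $S'$. The base case $i = 1$ is immediate, since the first vertex of both sequences coincides and first-fit assigns it color $1$. For the inductive step, the first-fit heuristic assigns to the vertex at position $i$ the smallest color index not appearing among the colors of its neighbors at positions $1, \ldots, i-1$. Since the prefix of length $i-1$ is the same in $S$ and $S'$, the set of neighbors considered is identical, and by the inductive hypothesis their assigned colors also agree; hence the color at position $i$ agrees as well.

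The argument is essentially routine once the prefix observation is made; the main point worth being careful about is the bookkeeping for the two cases, ensuring that $v$ does not appear in positions $[1, q)$ in either sequence, so that the move truly leaves this prefix pointwise intact. No deeper obstacle is anticipated, as the determinism of the first-fit rule then forces the colorings to agree on $[1, q)$.
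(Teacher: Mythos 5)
Your proposal is correct and follows essentially the same route as the paper's proof: both rest on the observation that the two sequences share the prefix of length $\min(p_S(v),p_{S'}(v))-1$ and that the deterministic first-fit rule therefore reproduces the same colors there. Your version merely makes the induction explicit where the paper appeals directly to determinism.
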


\begin{proof}
    The proof is direct by considering how the vertices are colored. For the first claim, the subsequence of vertices is the same in $S$ and $S'$ in the interval $[1, p_S(v))$. Consequently, the colors of the vertices in this interval will be the same, since the first-fit is a deterministic heuristic. The same applies to the second claim, i.e., the sequence is preserved until the position in which the move operation changes the sequence.
\end{proof}

\begin{corollary}
\label{col:minequalcolors}
    Consider a solution $S$, a vertex $v$, and a solution $S'$ obtained by the move operation applied to the vertex $v$. It follows that the first $min(p_{S'}(v),p_{S}(v)))-1$ vertices will not have their colors changed.
\end{corollary}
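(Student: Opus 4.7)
The plan is to derive the corollary as a straightforward case split that unifies the two cases handled separately in Lemma~\ref{lemma:samecolor}. Since the move operation repositions the single vertex $v$, there are three possible relationships between $p_S(v)$ and $p_{S'}(v)$: strictly less, strictly greater, or equal. In each case I expect to immediately read off the conclusion from the lemma (or from triviality), so the proof will essentially be one or two lines of case analysis.

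First I would observe that $\min(p_S(v), p_{S'}(v))$ equals $p_S(v)$ in the first case of the lemma and equals $p_{S'}(v)$ in the second. Therefore the interval $[1, p_S(v))$ in case one and the interval $[1, p_{S'}(v))$ in case two are exactly the interval $[1, \min(p_S(v), p_{S'}(v)))$, which contains exactly $\min(p_S(v), p_{S'}(v)) - 1$ vertices. Applying Lemma~\ref{lemma:samecolor} to whichever case holds shows those vertices receive the same colors in the colorings produced by $S$ and $S'$.

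For completeness, I would handle the degenerate case $p_S(v) = p_{S'}(v)$ separately, noting that in this case the move operation does not actually change the relative order of any pair of vertices, so $S$ and $S'$ are identical and the statement is vacuous up to position $\min(p_S(v), p_{S'}(v)) - 1$. Since the first-fit heuristic is deterministic and depends only on the sequence, the coloring is preserved wholesale.

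There is no real obstacle here: the corollary is purely a bookkeeping reformulation of the lemma that collapses its two symmetric cases into a single $\min$-expression. The only thing to be careful about is that the corollary's interval is closed on the right at $\min - 1$, whereas the lemma uses the half-open interval $[1, \min)$; these describe the same set of positions, so the translation is immediate.
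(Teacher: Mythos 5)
Your proposal is correct and matches the paper's reasoning: the corollary is stated without a separate proof precisely because it is the immediate unification of the two cases of Lemma~\ref{lemma:samecolor} via the $\min$, exactly as you describe. Your handling of the degenerate case $p_S(v)=p_{S'}(v)$ and the off-by-one bookkeeping between $[1,\min)$ and ``the first $\min-1$ vertices'' is careful and accurate.
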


\begin{corollary}
    \label{col:maxdiffcolors}
    Consider a solution $S$, a vertex $v$, and a solution $S'$ obtained by the move operation applied to the vertex $v$. It follows that the number of vertices that can have their color changed is $O(|V|-(min(p_{S'}(v),p_{S}(v))-1))$.
\end{corollary}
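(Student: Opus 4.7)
The plan is to derive this corollary as an immediate consequence of Corollary~\ref{col:minequalcolors}. By that corollary, after applying the move operation to $v$, every vertex occupying one of the first $\min(p_{S'}(v),p_{S}(v))-1$ positions of the sequence retains its color. Therefore, the set of vertices whose color could possibly be altered is contained in the complement of this prefix, which has cardinality $|V|-(\min(p_{S'}(v),p_{S}(v))-1)$.

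Concretely, I would structure the argument as follows. First, let $q=\min(p_{S'}(v),p_{S}(v))$ and observe that Corollary~\ref{col:minequalcolors} provides an exact bijection (identity on indices) between the first $q-1$ positions of $S$ and $S'$ under which colors are preserved. Second, I would note that any vertex whose color differs between the two colorings must therefore lie at position $\geq q$ in at least one of the two sequences; since each sequence is a permutation of $V$, the number of positions in $[q,|V|]$ is exactly $|V|-q+1$. Finally, taking this quantity as the upper bound yields the asymptotic expression $O(|V|-(\min(p_{S'}(v),p_{S}(v))-1))$ claimed in the statement.

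There is essentially no obstacle here: the proof is a counting argument riding on Corollary~\ref{col:minequalcolors}. The only subtlety worth making explicit is that ``can have their color changed'' is an upper bound on the number of vertices whose color actually changes, not an exact count; the corollary correctly states $O(\cdot)$ rather than an equality, so no tightness claim needs to be established. I would therefore keep the proof to two or three sentences, presenting it as a direct corollary rather than a standalone result.
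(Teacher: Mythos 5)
Your argument is correct and matches the paper's intent: the paper states this corollary without an explicit proof, presenting it as an immediate consequence of Corollary~\ref{col:minequalcolors} (itself derived from Lemma~\ref{lemma:samecolor}), which is exactly the counting argument you give. No gaps.
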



\begin{proposition}
\label{lemma:worstcase}
    In the worst case, applying a move operation to a vertex implies updating the color of all vertices.
\end{proposition}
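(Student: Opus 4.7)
I would prove the proposition by exhibiting a concrete witness in which a single move operation changes the color of every vertex. Corollary~\ref{col:minequalcolors} already tells us where to look: such a worst case can occur only if $\min(p_{S}(v),p_{S'}(v)) = 1$, that is, the move must reassign the vertex sitting in position~$1$ of the sequence. The natural candidate is the path $P_n$ on vertices $v_1,\ldots,v_n$ with edges $v_iv_{i+1}$ for $1 \le i \le n-1$, starting from the connected sequence $S = (v_1,v_2,\ldots,v_n)$ and applying the move that displaces $v_1$ from position~$1$ to position~$2$. This produces $S' = (v_2,v_1,v_3,v_4,\ldots,v_n)$, and a direct check shows that $S'$ remains a connected sequence.

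The remaining step is to verify that the two first-fit colorings are everywhere complementary on $\{1,2\}$. A short induction on the position gives $c_S(v_i)=1$ for $i$ odd and $c_S(v_i)=2$ for $i$ even. For $S'$, one computes $c_{S'}(v_2)=1$ and $c_{S'}(v_1)=2$ directly, and then observes that each $v_i$ with $i\ge 3$ has exactly one already-colored neighbor at processing time (namely $v_{i-1}$), so the alternation propagates from $c_{S'}(v_3)=2$ to yield $c_{S'}(v_i)=2$ for $i$ odd and $c_{S'}(v_i)=1$ for $i$ even. Consequently $c_{S'}(v_i) \neq c_S(v_i)$ for every $i \in \{1,\ldots,n\}$, and the proposition follows.

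I do not anticipate any substantive obstacle beyond choosing a suitable witness. The only care required is confirming that the alternation in $S'$ is not disrupted by having $v_1$ reappear at position~$2$ (it is not, because $v_1$'s unique neighbor is $v_2$, already colored~$1$). In particular, the upper bound implicit in Corollary~\ref{col:maxdiffcolors} with $\min=1$ is attained by this endpoint relocation on a path, so the construction simultaneously proves the proposition and certifies tightness of the preceding corollary.
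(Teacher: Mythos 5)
Your proposal is correct and follows essentially the same route as the paper: the paper's proof also exhibits a path witness (a six-vertex path, moving the first endpoint $a$ to the end of the sequence, which flips the alternating $\{1,2\}$-coloring everywhere), relying on a figure rather than the explicit induction you give. Your version is slightly more general (arbitrary $P_n$, with the move to position $2$ instead of to the end) and more rigorously verified, but the underlying idea is identical.
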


\begin{proof}
    Figure \ref{fig:changeAllColors} illustrates a case in which changing a vertex position in the sequence can lead to the update of all vertices. 
    In the example, moving the first vertex ($a$) to the end of the sequence implies that all the vertices must be recolored.
\end{proof}

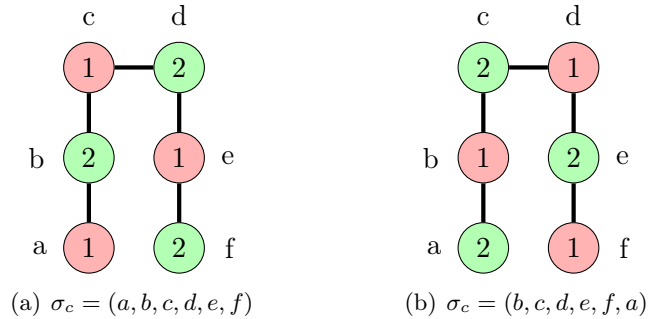
\begin{figure}[!ht]
    \centering
    \subfigure[$\sigma_c=(a,b,c,d,e,f)$]{
\begin{tikzpicture}
	[scale=1.2,every node/.style={circle,draw=black}]
	\node[label=left:a, fill=red!30] (0) at  (0, 0)   {$1$};
	\node[label=left:b, fill=green!30] (1) at  (0, 1)   {$2$};
	\node[label=c, fill=red!30] (2) at  (0, 2)   {$1$};
	\node[label=d, fill=green!30] (3) at  (1, 2)   {$2$};
        \node[label=right:e, fill=red!30] (4) at  (1, 1)   {$1$};
        \node[label=right:f, fill=green!30] (5) at  (1, 0)   {$2$};
    \begin{scope}[line width=1.6pt,every node/.style={}]
	\draw (0) edge (1);
	\draw (1) edge (2);
	\draw (2) edge (3);
        \draw (3) edge (4);
        \draw (4) edge (5);
	\end{scope}
 \end{tikzpicture}
 \label{fig:n_cgrundy}
 }
 \hspace{1.5cm}
 \subfigure[$\sigma_c=(b,c,d,e,f,a)$]{
\begin{tikzpicture}
	[scale=1.2,every node/.style={circle,draw=black}]
	\node[label=left:a, fill=green!30] (0) at  (0, 0)   {$2$};
	\node[label=left:b, fill=red!30] (1) at  (0, 1)   {$1$};
	\node[label=c, fill=green!30] (2) at  (0, 2)   {$2$};
	\node[label=d, fill=red!30] (3) at  (1, 2)   {$1$};
        \node[label=right:e, fill=green!30] (4) at  (1, 1)   {$2$};
        \node[label=right:f, fill=red!30] (5) at  (1, 0)   {$1$};
    \begin{scope}[line width=1.6pt,every node/.style={}]
	\draw (0) edge (1);
	\draw (1) edge (2);
	\draw (2) edge (3);
        \draw (3) edge (4);
        \draw (4) edge (5);
	\end{scope}
 \end{tikzpicture}
\label{fig:c_cgrundy}
 }
 \caption{Example of how moving a vertex can lead to changing the color of all vertices.}
\label{fig:changeAllColors}
\end{figure}

\begin{proposition}
    \label{lemma:intervalEqualLeft}
    Consider a solution $S$, a vertex $v$, and a solution $S'$ obtained by the move operation applied to the vertex $v$. If $p_{S'}(v) < p_{S}(v)$ and $c_{S}(v) = c_{S'}(v)$, all the vertices will have the same color in $S$ and $S'$.
\end{proposition}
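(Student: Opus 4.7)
The plan is to partition the vertices of $V$ into four groups according to $S$: the prefix $P=\{w : p_S(w) < p_{S'}(v)\}$, the block $U=\{u_1,\ldots,u_k\}$ sitting at positions $p_{S'}(v),\ldots,p_S(v)-1$ in $S$ (equivalently, at positions $p_{S'}(v)+1,\ldots,p_S(v)$ in $S'$), the vertex $v$ itself, and the suffix $W=\{w : p_S(w) > p_S(v)\}$. The goal is to argue that each group retains its coloring, so that no vertex changes color.

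For $P$, Lemma~\ref{lemma:samecolor} applies directly. For $v$, the statement is exactly the hypothesis. The substantive step is to prove $c_{S'}(u_i)=c_S(u_i)$ by induction on $i$. Let $A_i$ denote the set of colors assigned by $S$ to the neighbors of $u_i$ that precede $u_i$ in $S$; by the inductive hypothesis together with the prefix step, those same vertices still precede $u_i$ in $S'$ with identical colors. The only possible extra neighbor of $u_i$ that $u_i$ sees in $S'$ but not in $S$ is $v$, which contributes the color $c_{S'}(v)=c_S(v)$. The critical observation is that in $S$ the vertex $v$ is colored \emph{after} $u_i$, so if $v\in N(u_i)$, the first-fit rule applied to $v$ in $S$ forces $c_S(v)\neq c_S(u_i)$. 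A short case split on whether $c_S(v)<c_S(u_i)$ (then $c_S(v)\in A_i$ already, since the first-fit choice of $c_S(u_i)$ implies $\{1,\ldots,c_S(u_i)-1\}\subseteq A_i$) or $c_S(v)>c_S(u_i)$ (then the new element lies strictly above the old minimum) shows that the smallest color missing from $A_i\cup\{c_S(v)\}$ is still $c_S(u_i)$, completing the induction.

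Finally, for every $w\in W$ one has $p_{S'}(w)=p_S(w)$, and the set of vertices preceding $w$ is the same in $S$ and in $S'$; by the previous paragraph those preceding vertices have matching colors, so the first-fit outcome for $w$ is unchanged. The main obstacle I anticipate is the inductive step for $U$: one must verify carefully that the two ``avoided color sets'' differ by at most the single element $c_S(v)$, and that this extra element cannot shift the first-fit minimum. The hypothesis $c_S(v)=c_{S'}(v)$ is indispensable here, together with the observation that it is $v$ that sees $u_i$ during the coloring of $S$, and not the reverse.
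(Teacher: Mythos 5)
Your proof is correct and follows essentially the same route as the paper's: both rest on the prefix being preserved (Lemma~\ref{lemma:samecolor}) and on the fact that the only new forbidden color a shifted vertex $u_i$ can see is $c_{S'}(v)=c_S(v)$, which cannot alter its first-fit color because $v$ was colored after $u_i$ in $S$ and so $c_S(v)\neq c_S(u_i)$. The paper phrases this as a contradiction on the first vertex whose color changes and leaves the propagation to the block and suffix implicit (``this process can be repeated''), whereas you carry out the induction and the case split on $c_S(v)$ versus $c_S(u_i)$ explicitly; the content is the same.
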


\begin{proof}
Consider two solutions $S$ and $S'$ for which the conditions hold. Using Lemma~\ref{lemma:samecolor}, we can state that the vertices in the interval $[1,p_{S'}(v)]$ will have the same colors in $S$ and $S'$. 
Assume, by contradiction, that there is a vertex $w$ such that $p_{S'}(w) = p_{S'}(v)+1$ and $c_{S}(w) \neq c_{S'}(w)$. This implies that the color $c_{S}(w)$ has to be used in the neighborhood of $w$ before $w$ is colored in $S'$. However, we know that we have the same vertices and they are colored with the same colors as in $S$ in the interval $[1,p_{S'}(v)]$. Therefore, this leads to a contradiction because $w$ and one of its neighbors in $S$ would have to have the same color. This process can be repeated for other vertices, proving that, given the conditions of the lemma, all the vertices in the solution $S'$ will have the same color as in $S$.
\end{proof}

\begin{proposition}
    \label{lemma:intervalEqualRight}
    Consider a solution $S$, a vertex $v$, and a solution $S'$ obtained by the move operation applied to the vertex $v$. If $p_{S}(v) < p_{S'}(v)$ and $c_{S}(u) = c_{S'}(u)$ for every $u\in N[v]$ such that $p_{S}(v) \leq p_{S'}(u) \leq p_{S'}(v)$, then all vertices will have the same color in $S$ and $S'$.
\end{proposition}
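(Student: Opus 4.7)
The plan is to prove that the two colorings agree at every position by induction on position $t$ in $S'$. First I would write $S = (u_1, \ldots, u_n)$ with $u_{p_S(v)} = v$, so that the vertex at position $t$ in $S'$ equals $u_t$ for $t < p_S(v)$, equals $u_{t+1}$ for $p_S(v) \leq t \leq p_{S'}(v) - 1$, equals $v$ for $t = p_{S'}(v)$, and equals $u_t$ again for $t > p_{S'}(v)$. The key structural observation is that for any vertex $w$ sitting at position $t \geq p_S(v)$ in $S'$, its set of predecessors equals the set of predecessors of $w$ in $S$ minus $v$ when $t \leq p_{S'}(v) - 1$, and is unchanged when $t > p_{S'}(v)$.

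The induction then splits into three ranges. For $t < p_S(v)$, Lemma~\ref{lemma:samecolor} yields the claim directly. For $t \in [p_S(v), p_{S'}(v) - 1]$, setting $w = u_{t+1}$, I would argue by cases: if $w \in N(v)$, then $w \in N[v]$ and $p_{S'}(w) = t \in [p_S(v), p_{S'}(v)]$, so the hypothesis of the proposition supplies $c_{S'}(w) = c_S(w)$; if $w \notin N(v)$, deleting $v$ from $w$'s predecessor set does not remove any color from the neighborhood of $w$, and combining this with the induction hypothesis, first-fit assigns $w$ the same color as in $S$. The case $t = p_{S'}(v)$ (i.e., $w = v$) is explicitly covered by the hypothesis applied to $v$ itself. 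Finally for $t > p_{S'}(v)$, the vertex and its predecessor set coincide with those in $S$, and by induction all predecessors carry the same colors, so first-fit returns $c_S(u_t)$.

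The main obstacle is the middle window $[p_S(v), p_{S'}(v) - 1]$, where the sequences genuinely differ and Lemma~\ref{lemma:samecolor} does not apply. The crux is the dichotomy on adjacency with $v$: an adjacent $w$ is handled directly by the hypothesis, while a non-adjacent $w$ is unaffected by the removal of $v$ from its predecessor set, since first-fit depends only on the multiset of colors appearing among $w$'s neighbors. Once this window is traversed, the tail of $S'$ is identical to that of $S$ and the induction closes mechanically.
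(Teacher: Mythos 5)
Your proposal is correct and follows essentially the same route as the paper: handle the prefix $[1,p_S(v))$ via Lemma~\ref{lemma:samecolor}, dispose of the vertices in the window $[p_S(v),p_{S'}(v)]$ by splitting on membership in $N[v]$ (hypothesis for neighbors and $v$ itself, unchanged predecessor colors for non-neighbors), and close with the tail whose predecessor sets are untouched. Your version merely makes explicit the positional re-indexing and the induction on position that the paper's argument leaves implicit.
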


\begin{proof} 
Consider two solutions $S$ and $S'$ for which the conditions hold. Using Lemma~\ref{lemma:samecolor}, it follows that the vertices in the interval $[1,p_{S} (v))$ will have the same colors in $S$ and $S'$. 
Moreover, the vertices $u\notin N[v]$ such that $p_{S}(v) \leq p_{S'}(u) \leq p_{S'}(v)$ also have the same colors in $S$ and $S'$ since none of their neighbors anteceding them in the sequence changed color. Thus, every vertex in the interval $[p_{S} (v),p_{S'}(v)]$ have the same colors in $S$ and $S'$. It remains to show that the vertices in the interval $(p_{S'}(v),|V|]$ have the same colors in both solutions.  The result follows since there was no change in the colors of their neighbors preceding them in the sequence.
\end{proof}

\begin{proposition}
\label{lemma:criticalposition}
Consider a solution $S$, a vertex $v$, and two vertices $z,w\in N(v)$ such that $p_{S}(z) < p_{S}(w)$ and there is no $u \in N(v)$ with $p_S(u) \in (p_{S}(z),p_{S}(w))$. It follows that all the solutions $S'$ that are stable with respect to $V - v$ and in which $p_{S'}(z) < p_{S'}(v) < p_{S'}(w) $ result into the same connected Grundy coloring.
\end{proposition}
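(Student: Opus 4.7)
The plan is to reduce the statement to an adjacent-swap argument. Let $\tau=(u_1,\ldots,u_{n-1})$ be the common ordering of $V-v$ in any such $S'$ (this is well-defined by stability). Let $k$ and $\ell$ be the positions of $z$ and $w$ in $\tau$, so by the gap hypothesis the entries $u_{k+1},\ldots,u_{\ell-1}$ are all non-neighbors of $v$. Any $S'$ meeting the hypotheses is then obtained from $\tau$ by inserting $v$ at some slot $i+1$ with $k\leq i\leq \ell-1$; denote this sequence $S^{(i)}$. Thus it suffices to prove that $S^{(i)}$ and $S^{(i+1)}$ yield the same connected Grundy coloring for every $k\leq i\leq \ell-2$, since chaining these equalities covers all allowed positions.

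First I would verify that both $S^{(i)}$ and $S^{(i+1)}$ are genuinely connected sequences; this uses that $u_{i+1}\notin N(v)$ has a neighbor among $u_1,\ldots,u_i$ (inherited from the connectedness of $S^{(i)}$), and that $z=u_k$ is a neighbor of $v$ with $k\leq i$, which keeps $v$'s predecessor-neighborhood nonempty in $S^{(i+1)}$. Next I would apply Lemma~\ref{lemma:samecolor} (or Corollary~\ref{col:minequalcolors}) to conclude that the first $i$ vertices produce identical colorings in both sequences.

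The core step is then the adjacent swap at positions $i+1$ and $i+2$: in $S^{(i)}$ these are $v$ followed by $u_{i+1}$, and in $S^{(i+1)}$ they are $u_{i+1}$ followed by $v$. Since $u_{i+1}\notin N(v)$ (by the choice of the interval $[k+1,\ell-1]$), the first-fit color assigned to $v$ depends only on the colors of its neighbors inside $\{u_1,\ldots,u_i\}$, which is the same set with the same colors in both sequences; symmetrically, the color of $u_{i+1}$ depends only on its neighbors among $\{u_1,\ldots,u_i\}$, again unaffected by the relative order with $v$. Hence both $v$ and $u_{i+1}$ receive the same colors in $S^{(i)}$ and $S^{(i+1)}$.

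Finally, for every position $j\geq i+3$, the vertex at slot $j$ is the same in $S^{(i)}$ and $S^{(i+1)}$ and its set of predecessors (as a set) is identical in the two sequences. Combined with the fact that all those predecessors receive matching colors (by the previous two paragraphs), first-fit assigns the same color to that vertex in both sequences. This completes the induction on $i$ and proves the proposition. The only subtlety I anticipate is the bookkeeping around the connectedness of the swapped sequence and carefully keeping track of the indices in the two interpretations ``position in $S^{(i)}$'' versus ``index in $\tau$''; the coloring invariance itself follows cleanly from $u_{i+1}\notin N(v)$.
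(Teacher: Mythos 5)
Your proof is correct, and it takes a slightly different route from the paper's. The paper argues by contradiction between two arbitrary admissible placements $S'$ and $S''$ of $v$: it takes the first vertex whose colors differ and shows it cannot be a non-neighbor of $v$ (its preceding neighbors are unaffected by where $v$ sits), cannot be $v$ itself (its preceding-neighbor set is exactly the neighbors up to $z$ in both orderings, by the gap hypothesis), and cannot be any other neighbor of $v$ (each such neighbor lies on the same side of $v$ in both orderings). You instead factor the comparison into a chain of adjacent transpositions $S^{(i)} \leftrightarrow S^{(i+1)}$, each swapping $v$ with a non-neighbor $u_{i+1}$, and invoke the standard commutation fact that first-fit is invariant under swapping adjacent non-adjacent vertices. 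Both proofs hinge on the same observation --- the window $(p_S(z),p_S(w))$ contains no neighbors of $v$ --- but your decomposition localizes the argument to a single elementary swap lemma and makes the bookkeeping (including the connectivity of every intermediate sequence, which you verify and which the paper does not explicitly address) more transparent, at the cost of introducing the intermediate sequences $S^{(i)}$; the paper's version handles two arbitrary placements in one shot. Note also that for the coloring conclusion alone the connectivity of the intermediate $S^{(i)}$ is not strictly needed, since first-fit is defined on any sequence, so that part of your argument is a bonus rather than an obligation.
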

\begin{proof}
By contradiction, assume that there are vertices $v,w,z$ and two solutions $S'$ and $S''$ respecting the conditions of the lemma such that $S'$ and $S''$ lead to different connect Grundy colorings. Let $u$ be the first vertex in the related ordering for which $c_{S'}(u) \not= c_{S''}(u)$. Notice that $u$ must be a neighbor of $v$ or $v$ itself, since $S'$ and $S''$ are stable with respect to $V - v$. Observe that all the vertices in the neighborhood of $v$ that precede  $w$ in $S'$ and $S''$ have the same set of preceding vertices and in the same order. Thus, they must all receive the same color in $c_{S'}$ and $c_{S''}$. Besides, so does $w$ and for extension $v$, as $v$ does not have any other neighbor before it and all such neighbors have the same color. Finally, by the same argument, $z$ must have the same color and all other neighbors of $v$ succeeding it.
\end{proof}

Proposition~\ref{lemma:criticalposition} implies the following corollary stating a necessary condition for neighbor solutions to have different colors.

\begin{corollary}
\label{col:criticalpositions}
Consider a solution $S$ and a neighbor solution $S' \in \mathcal{N}(S) $. It follows that $S$ and $S'$ can only lead to different colorings if they differ in the relative order of at least a pair of neighbor vertices.
\end{corollary}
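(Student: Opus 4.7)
The plan is to establish the contrapositive: assuming $S$ and $S' \in \mathcal{N}(S)$ agree on the relative order of every pair of adjacent vertices, show that both lead to the same coloring. Since $S$ and $S'$ differ by a single move, they already coincide on the relative order of every pair not involving the moved vertex $v$; so the contrapositive hypothesis reduces to saying that for every $u \in N(v)$, the relative order of $v$ and $u$ is also unchanged between $S$ and $S'$.

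With this reduction in hand, I would split into cases according to where $v$ sits relative to $N(v)$ in $S$. If $v$ has both a preceding and a succeeding neighbor in $S$, let $z$ be the last neighbor of $v$ appearing before it and $w$ the first one appearing after it; then $z,w$ satisfy the hypotheses of Proposition~\ref{lemma:criticalposition}, the reduction guarantees that $v$ still lies between $z$ and $w$ in $S'$, and both $S$ and $S'$ are stable with respect to $V\setminus\{v\}$. A direct invocation of Proposition~\ref{lemma:criticalposition}, noting that $S$ itself is one of the admissible solutions in that statement, yields $c_S = c_{S'}$. If $v$ has no preceding neighbor in $S$, then the connectedness of the sequence forces $v$ to be the first vertex of $S$, and the order-preservation hypothesis forces $v$ to be the first vertex of $S'$ as well, so $S = S'$ and there is nothing to prove.

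The only genuinely new case is when $v$ succeeds all of its neighbors in $S$, which Proposition~\ref{lemma:criticalposition} does not cover. I would dispatch it (and in fact unify all three cases) by isolating the following stand-alone observation: if two connected sequences orient every edge the same way, they produce identical first-fit colorings. The proof is a short induction on the position in $S$: the set of preceding neighbors of each vertex is the same in $S$ and $S'$ by assumption, those preceding neighbors have matching colors by the inductive hypothesis, and first-fit therefore assigns the same minimum excluded color at every step. The main obstacle is recognizing that the proposition alone leaves a small gap at this ``boundary'' position of $v$; once that gap is identified, the supplementary induction is entirely routine and the corollary follows immediately by contrapositive.
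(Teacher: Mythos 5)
Your argument is correct, and it follows the same basic route the paper intends --- the paper offers no written proof, merely asserting that Proposition~\ref{lemma:criticalposition} implies the corollary. Your contribution is to check that implication carefully, and in doing so you expose a real (if small) gap: Proposition~\ref{lemma:criticalposition} only covers the configuration in which the moved vertex $v$ is sandwiched between two consecutive neighbors $z$ and $w$, so it says nothing about a move in which $v$ stays after \emph{all} of its neighbors in both $S$ and $S'$ (the symmetric case, $v$ before all its neighbors, degenerates to $S=S'$ by connectivity, as you note). Your supplementary lemma --- two connected sequences that orient every edge identically produce identical first-fit colorings, proved by induction on $p_S$ using the fact that each vertex's set of preceding neighbors is unchanged --- closes that gap and is in fact strictly stronger than the corollary: it is self-contained, does not require $S'$ to be a single-move neighbor of $S$, and would let one dispense with Proposition~\ref{lemma:criticalposition} entirely for this purpose. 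The trade-off is that the paper's intended derivation (when it applies) reuses machinery already established for the local-search pruning, whereas your lemma is an extra, albeit routine, induction; in exchange you get a complete and more general argument.
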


\begin{proposition}
    \label{lemma:connectedLeft}
    Consider a connected solution $S$ and two vertices $u,v$ such that $uv \in E$ and $p_{S}(u) < p_{S}(v)$. The move of the vertex $v$ to the position $p_{S}(u)$ always generates a connected solution $S'$ if $f_{S}(v) < p_{S}(u)$ or $p_S(u)=1$. Furthermore, it is possible to verify that this operation generates a connected solution $S'$ in $O(1)$.
\end{proposition}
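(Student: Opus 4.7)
The plan is to analyze how the move operation transforms the sequence position-by-position, and check the connectivity requirement at each position of $S'$. The move of $v$ from position $p_S(v)$ to position $p_S(u)$ produces an $S'$ with the following structure: positions $1,\ldots,p_S(u)-1$ are occupied by the same vertices as in $S$; position $p_S(u)$ is occupied by $v$; positions $p_S(u)+1,\ldots,p_S(v)$ are occupied by the vertices previously at positions $p_S(u),\ldots,p_S(v)-1$ (shifted right by one); and positions $p_S(v)+1,\ldots,n$ are unchanged. I would make this index bookkeeping explicit first, because every subsequent argument depends on it.

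Next I would verify connectivity at each class of positions. For positions $i<p_S(u)$ the vertices and their predecessors are identical to those in $S$, so connectivity is inherited from $S$. For position $p_S(u)$, the vertex placed there is $v$; this position needs no preceding neighbor when $p_S(u)=1$, and in the other case, $f_S(v)<p_S(u)$ by hypothesis provides a neighbor of $v$ sitting at some position $f_S(v)$ that is unchanged in $S'$, hence still precedes $v$. For positions $p_S(u)<i\le p_S(v)$, the occupant is a vertex that sat at position $i-1$ in $S$ and there had some preceding neighbor at a position $j<i-1$; a quick case split on whether $j<p_S(u)$, $j=p_S(u)$, or $p_S(u)<j<i-1$ shows that in $S'$ this neighbor lies at position $j$ or $j+1$, in both situations strictly less than $i$. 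The subcase where $p_S(u)=1$ requires a separate, trivial observation: the vertex $u$ is pushed from position $1$ to position $2$, and has $v$ (its actual neighbor via $uv\in E$) now sitting at position $1$, so its connectivity is also guaranteed. Finally, for positions $i>p_S(v)$, the set of preceding vertices is exactly the same multiset as in $S$, so any preceding neighbor in $S$ is still preceding in $S'$.

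For the complexity statement, once the data structures supporting $f_S$ and $p_S$ give $O(1)$ lookups (as the paper assumes), checking the disjunction $f_S(v)<p_S(u)$ or $p_S(u)=1$ is a pair of constant-time comparisons, hence $O(1)$. The main obstacle, although modest, is handling the boundary case $p_S(u)=1$ cleanly, since then the condition $f_S(v)<p_S(u)$ cannot hold and connectivity of the first vertex of $S'$ must be argued by vacuity, while connectivity of the second vertex must be argued from $uv\in E$; keeping these two logically distinct sufficient conditions from interfering in the case analysis is where I would be most careful.
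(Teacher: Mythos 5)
Your proof is correct and follows essentially the same decomposition as the paper's: the prefix before $p_S(u)$ is unchanged, $v$ at its new position is covered by $f_S(v)<p_S(u)$ or by being first, the shifted block keeps its preceding neighbors (which can only move from $j$ to $j$ or $j+1$), the suffix sees the same set of predecessors, and the boundary case $p_S(u)=1$ is resolved via $uv\in E$. Your position-by-position bookkeeping is in fact somewhat more explicit than the paper's interval-level argument, but the underlying ideas and case split are the same.
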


\begin{proof}
    First, assume that $p_S(u)>1$ and $f_S(v) < p_S(u)$. Since $S$ is connected, then the interval $[1,p_S(u))$ in $S$ is connected and it is the same as in $S'$. Since $f_S(v) < p_S(u)$, then $v$ has a neighbor in the interval $[1,p_S(u))$. As $p_S(u) = p_{S'}(v )$, we have that $S'$ is connected in the interval $[1,p_{S'}(v)]$. Moreover, as $S$ is connected, the interval $(p_S(u),p_S(v))$ is connected in $S'$, as well as the interval $[1, p_S(v)]$. Finally, knowing that the set of vertices in the interval $[1,p_S(v)]$ is equal in $S$ and $S'$, and $S$ is connected, then $S'$ is connected in the interval $ [1,|V|]$, that is, $S'$ is connected. As already seen, the check that needs to be done is whether $f_S(v) < p_S(u)$, which is a comparison that is made in $O(1)$. 
    We now consider the case $p_S(u)=1$, which implies that $p_{S'}(v)=1$. Since $S$ is connected, $S'$ is connected in the interval $[1,p_S(v)]$. Besides, as $uv \in E$, the sequence in the interval $[1,2]$ is connected. Additionally, the relative order of the remaining vertices belonging to $V-v$ in the two solutions is the same in the interval $[3,|V|]$. Thus, $S'$ will always be connected and no further check needs to be done.
\end{proof}

\begin{proposition}
    \label{lemma:connectedRight}
    Consider a connected solution $S$ and two vertices $u,v$ with $uv \in E$ such that $p_{S}(v) < p_{S}(u)$. The move of the vertex $v$ to the position $p_{S}(u)$ always generates a connected solution $S'$ if, for every $w \in N(v)$ such that $p_{S}(v)+1 \leq p_{S}(w) \leq p_{S}(u)$, one of the following conditions holds: (i) $f_{S}(w) < p_{S}(v)$ or (ii) $f_{S}(w) = p_{S}(v)$ and $fc_{S}(w) > 1$. Moreover, we can verify whether this operation generates a connected solution $S'$ in $O(|N(v)|)$.
\end{proposition}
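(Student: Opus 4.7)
The plan is to analyze the structural effect of the move on $S'$, isolate precisely which vertices could lose their only preceding neighbor, and then show that conditions (i) and (ii) are exhaustive safeguards against such a loss. Concretely, moving $v$ from position $p_S(v)$ to position $p_S(u)$ leaves the vertices originally at positions $[1,p_S(v)-1]$ untouched, shifts those originally at positions $[p_S(v)+1,p_S(u)]$ one slot to the left, places $v$ at position $p_S(u)$, and keeps the vertices at positions $[p_S(u)+1,|V|]$ in place. Thus the \emph{set} of predecessors of a vertex originally at position $p > p_S(u)$ is unchanged (only reordered), and those at positions $[1,p_S(v)-1]$ keep exactly the same prefix; hence for these two groups connectivity is automatically inherited from $S$.

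The only vertices whose set of predecessors changes are $v$ itself and those $w$ originally at positions $p_S(v)+1 \le p_S(w) \le p_S(u)$, whose new predecessor set equals their old one with $v$ removed. For $v$ at its new position $p_S(u)$, the vertex $u$ sits at position $p_S(u)-1$ in $S'$ and $uv \in E$, so $v$ inherits a preceding neighbor for free. For the shifted vertices, I split two subcases. If $w \notin N(v)$, the predecessor witnessing $w$'s connectivity in $S$ is different from $v$, hence still present. If $w \in N(v)$, then since $p_S(v) < p_S(w)$, the vertex $v$ is one neighbor of $w$ that precedes $w$ in $S$, so necessarily $f_S(w) \le p_S(v)$; it remains to guarantee that \emph{after} deleting $v$ some neighbor of $w$ still precedes it. Case (i) $f_S(w) < p_S(v)$ provides such a neighbor at an unchanged earlier position. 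Case (ii) $f_S(w) = p_S(v)$ means $v$ is the very first preceding neighbor of $w$, and $fc_S(w) > 1$ certifies the existence of at least one further preceding neighbor, which survives the removal. The implication ``$w \in N(v)$ and $p_S(v) < p_S(w)$'' forces $f_S(w) \le p_S(v)$, so (i) and (ii) together exhaust all situations in which $w$ could be endangered.

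Finally, to establish the $O(|N(v)|)$ verification cost, I will note that the only vertices that need checking are those in $N(v)$ intersected with the window $[p_S(v)+1,p_S(u)]$; all others are handled automatically by the preceding analysis. For each such $w$ the test consists of reading $f_S(w)$, comparing it with $p_S(v)$, and (when equality holds) reading $fc_S(w)$, all of which are constant-time queries by hypothesis. Iterating over $N(v)$ and testing the position window takes $O(|N(v)|)$ time overall. The main obstacle I anticipate is the careful bookkeeping showing that the predecessor set of each non-moved vertex in $S'$ is exactly its old predecessor set with $v$ possibly removed; once that is nailed down, the case analysis above dictates the exact conditions stated in the proposition.
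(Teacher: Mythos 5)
Your proof is correct and follows essentially the same route as the paper's: both reduce the question to checking, for each neighbor $w$ of $v$ in the window $(p_{S}(v),p_{S}(u)]$, that a preceding neighbor other than $v$ survives the move, with cases (i) and (ii) covering the two possibilities, and both note that $v$ itself is safe at its new position because $u$ immediately precedes it; your explicit bookkeeping of how each vertex's predecessor set changes (unchanged prefix, ``old set minus $v$'' for the shifted window, reordered-but-equal set for the suffix) is in fact somewhat more careful than the paper's. The only divergence is that the paper separately treats the case $p_{S}(v)=1$, where the vertex originally at position $2$ becomes the new head of the sequence; since that vertex is necessarily a neighbor of $v$ with $fc_{S}=1$, the stated hypothesis cannot hold there anyway, so your proof of the implication is unaffected.
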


\begin{proof}
    Consider a connected solution $S$ and two vertices $u,v$. First, consider the case $p_{S}(v)>1$. We want to show that applying the move operation in the vertex $v$ to the position $p_{S}(u)$ will generate a connected sequence $S'$ as long as one of the conditions of the lemma hold. By hypothesis, as $S$ is connected, we have that the sequence $S'$ is connected in the interval $[1, \ldots, p_{S}(v))$. For the interval $(p_{S}(v) ,p_{S}(u)]$, we need to check for every $ w \in N(v)$, if they already have a neighbor that appears before them for the sequence $S'$ to be connected. Note that for the sequence $S'$ we are now showing that the sequence $(v_1,\ldots,u,v)$ is connected. Considering the first case, if $f_{S}(w) < p_{S}(v)$, then $w$ has a neighbor that appears before $v$ so it will still be connected to at least that neighbor. Now, in the second case, $f_{S}(w) = p_{S}(v)$ which means that $v$ is the first neighbor of $w$ in the sequence and to stay connected it needs another of its neighbors to appear before $w$ and after $v$, that is, $fc_{S}(w) > 1$, so $w$ does not depend exclusively on $v$ to be connected and its movement will not cause it to become disconnected in the induced subgraph. Thus, the interval $[1,p_{S}(u)]$ is connected in $S'$. Finally, as $S$ is connected, the interval $[1,p_{S}(u)]$ contains the same vertices in $S$ and in $S'$, and the order of the remaining vertices in the two solutions is the same, so in this case $S'$ will be connected. 
    We now consider the case  $p_S(v)=1$. The vertex $w$ such that $p_S(w)=2$ is going to be the first in the sequence and it is a trivial connected sequence. For the remaining vertices, the same reasoning of the previous case applies. 
    In total we only need to check the neighbors of $v$ and each check can be done in constant time. Thus, the whole procedure can be performed in $O(|N(v)|)$.

\end{proof}

\subsection{Local search algorithm}
\label{sec:NeighborhoodAlgorithm}

This section describes a local search algorithm considering the neighborhood defined in Section~\ref{sec:neighborhood}. It employs the first-improvement strategy, and uses the decoder described in Section~\ref{subsec:decoder} as well as Algorithm~\ref{alg:colorsequence} (detailed in the following) to reconstruct the solutions in the neighborhood. 
The idea is to move a single vertex to force it to come before or after one of its neighbors and thus change the colors of the other vertices. 
We already argued that it can be computationally expensive to use an approach that has to recolor a solution entirely or partially to know whether there will be an improvement or not. Moreover, from Proposition~\ref{lemma:worstcase} we know that generating a new coloring can be computationally expensive.
To minimize the number of solutions that will be recolored, only changes that could potentially result in a solution with more colors will be considered (supported by Proposition~\ref{lemma:criticalposition} and Corollary~\ref{col:criticalpositions}).

Algorithm \ref{alg:colorsequence} performs the move operation of a vertex to the given position, resulting in a new coloring. It receives as inputs the graph $G$, a solution $S$, a vertex $v$ to be moved, the new position to which the vertex will be moved ($nPosition$), and the last position whose color is guaranteed not to change due to the movement ($fPosition$). 
Line \ref{cs:move} generates the new solution by moving the vertex, while line \ref{cs:copy} copies the colors from the previous solution for all the vertices before the critical interval. Line \ref{cs:initCheck} creates a variable to track whether any vertex's color has changed within the critical interval. The loop in lines \ref{cs:initLoop}-\ref{cs:endLoop} iterates through all the remaining vertices in the sequence to assign colors. Lines \ref{cs:initNotChange}-\ref{cs:endNotChange} are executed while all vertices are the same colors as in the original sequence. Line \ref{cs:checkInterval} checks if the algorithm has exited the critical interval without any vertex having its color changed or the current vertex is not a neighbor of the moved vertex. If so, the vertex will retain the same colors as in the previous solution, which is implemented in line \ref{cs:copyOutCritical}. Lines \ref{cs:initcheckN}-\ref{cs:endcheckN} are executed whenever the current vertex is adjacent to or is the vertex passed as a parameter within the critical interval. In this case, it assigns a new color to the vertex and checks if the color differs from the previous solution. If it does, all subsequent vertices must be recolored (as it is within the critical interval). Lines \ref{cs:alrChange} and \ref{cs:alrChangeColor} color the current vertex whenever a vertex in the critical interval has already been colored differently from the previous solution. Finally, the algorithm returns in line \ref{cs:return} the new solution, the total number of colors used, and the coloring.

\begin{algorithm}
\small
\caption {ColorSequence ($G$, $S$, $colors$, $v$, $nPosition$, $fPosition$) }
\label{alg:colorsequence}
$S' \leftarrow move(S, v, nPosition)$\;\label{cs:move}
$colors' \leftarrow copyColors(colors, fPosition)$\;\label{cs:copy}
$checkChange \leftarrow FALSE$\;\label{cs:initCheck}
\ForEach{$p \in (fPosition,|V|]$}{\label{cs:initLoop}
    \If{$checkChange = FALSE$}{\label{cs:initNotChange}
        \If{$p > nPosition$ or $S'[p] \notin N[v]$}{\label{cs:checkInterval}
            $colors'[S'[p]] \leftarrow colors[S'[p]]$\;\label{cs:copyOutCritical}
        }
        \Else{\label{cs:initcheckN}
        $colors'[S'[p]] \leftarrow$ Color-Vertex$(G,S'[p], colors')$\;\label{cs:tryColor}
        \If{$colors'[S'[p]] \neq colors[S'[p]]$}{
            $checkChange \leftarrow TRUE$\;
        }
        }\label{cs:endcheckN}
    }\label{cs:endNotChange}
    \Else{\label{cs:alrChange}
        $colors'[S'[p]] \leftarrow$ Color-Vertex$(G,S'[p], colors')$\;\label{cs:alrChangeColor}
    }
}\label{cs:endLoop}

\Return $S', \max_{v \in V}colors', colors'$\;\label{cs:return}
\end{algorithm}

Algorithm~\ref{alg:decoder_con_grundy_1opt} details the local search procedure. It receives as inputs the graph $G$ and a random-key vector $x$.  
Line \ref{deccopt:initialSolution} initializes the solution from $x$ using the BRKGA's decoder. Line \ref{deccopt:seq} generates the sequence encoded by $x$. Line \ref{deccopt:checkControl} initializes a control variable defining whether there is an improvement in the local search process.
The local search is performed in the loop of lines \ref{deccopt:iniWhileLoop}-\ref{deccopt:endWhileLoop}. Line \ref{deccopt:resethasincrease} resets the control variable. Line~\ref{deccopt:auxStructs} creates the auxiliary structures $f_{S}$, $fc_{S}$ and $p_{S}$.
Remind that the vector $f_{S}$ stores at position $v$ the position of the first neighbor of $v$ that appears in the solution $S$. The vector $p_{S}$ stores the position of the vertex $v$ in the solution $S$. Moreover, the vector $fc_{S}$ stores, at position $v$, the number of neighbors of $v$ that appears before it in the solution $S$. Note that these structures play the role of the functions $p_{S}$, $f_{S}$, and $fc_{S}$  defined in Section~\ref{sec:neighborhoodanalysis}. Besides, $p_{S}$ can be computed in $O(|V|)$ through an iteration over the sequence $\sigma_c$, and the others in $O(|V|+|E|)$ using a graph traversal based in the sequence.
The loop in lines \ref{deccopt:iniForLoop}-\ref{deccopt:endForLoop} iterates for each vertex to try to change its position in the sequence. The inner loop in lines \ref{deccopt:iniInForLoop}-\ref{deccopt:endInForLoop} iterates for each neighbor of the vertex that will be a possible reference for the new position.
Note that a first improvement approach is being used in these loops. Lines \ref{deccopt:checkCriticalPositionBefore} and  \ref{deccopt:checkCriticalPositionAfter} check whether it is possible to perform the \textit{move operation} to the specific critical position. The functions \textit{canMoveLeft} and \textit{canMoveRight} perform the connectivity check on what would be the resulting sequence, based on Propositions \ref{lemma:connectedLeft} and \ref{lemma:connectedRight}, respectively. In lines \ref{deccopt:reconstructSolBefore} and \ref{deccopt:reconstructSolAfter}, Algorithm \ref{alg:colorsequence} is executed to generate the new solution $S'$ and its corresponding coloring based on the updated sequence. Lines~\ref{deccopt:checkBefore}-\ref{deccopt:updateBefore} and lines~\ref{deccopt:checkAfter}-\ref{deccopt:updateAfter} check whether there has been an improvement. In an affirmative case, they update the current solution. Line \ref{deccopt:return} returns the best solution found during the local search.

\begin{algorithm}[!ht]
\small
\caption {LocalSearch ($G$, $x$) }
\label{alg:decoder_con_grundy_1opt}
    $colors, max \leftarrow $ Decoder-Connected-Grundy($G$, $x$)\;
    \label{deccopt:initialSolution}
    $S \leftarrow sequence(x)$\; \label{deccopt:seq}
    $hasIncrease \leftarrow \mathit{TRUE}$ \;  \label{deccopt:checkControl}
    \While{$hasIncrease = \mathit{TRUE}$}{  \label{deccopt:iniWhileLoop}
        $hasIncrease \leftarrow \mathit{FALSE}$\; \label{deccopt:resethasincrease}
        $f_{S}, fc_{S}, p_{S} \leftarrow computeFirstNeighborAndVerticesPosition(S)$\; \label{deccopt:auxStructs}
        \ForEach{$v \in V$ \textbf{while} $hasIncrease = \mathit{FALSE}$}{ \label{deccopt:iniForLoop}
            \ForEach{$u \in N(v)$ \textbf{while} $hasIncrease = \mathit{FALSE}$}{ \label{deccopt:iniInForLoop}
                \If{$p_{S}[u] < p_{S}[v]$ and $canMoveLeft(G, S, f_{S}, fc_{S}, p_{S}, u, v)$}{ \label{deccopt:checkCriticalPositionBefore}
                 $S', colors', max' \leftarrow $ ColorSequence($G$, $S$, $colors$, $v$, $p_{S}[u]$, $p_{S}[u]-1$)\; \label{deccopt:reconstructSolBefore}
                 \If{$max < max'$}{\label{deccopt:checkBefore}
                    $S, max, colors \leftarrow S', max', colors'$\;
                    $hasIncrease \leftarrow \mathit{TRUE}$\;
                 }\label{deccopt:updateBefore}
                } 

                \If{$p_{S}[u] > p_{S}[v]$ and $canMoveRight(G, S, f_{S}, fc_{S}, p_{S}, u, v)$}{\label{deccopt:checkCriticalPositionAfter}
                 $S', colors', max' \leftarrow $ ColorSequence($G$, $S$, $colors$, $v$, $p_{S}[u]$, $p_{S}[v]-1$)\; \label{deccopt:reconstructSolAfter}
                 \If{$max < max'$}{\label{deccopt:checkAfter}
                    $S, max, colors \leftarrow S', max', colors'$\;
                    $hasIncrease \leftarrow \mathit{TRUE}$\;
                 }\label{deccopt:updateAfter}
                } 
                
            }\label{deccopt:endInForLoop}
        }\label{deccopt:endForLoop}
    }\label{deccopt:endWhileLoop}
    \Return $S$\; \label{deccopt:return}
\end{algorithm}
\section{Computational experiments}
\label{sec:experiments}

All the experiments were executed on a machine running Ubuntu x86-64 GNU/Linux, with an Intel Core i7-10700 Octa-Core 2.90 Ghz processor and 16Gb of RAM. The formulations were implemented in Julia and solved with Gurobi 10.0.1. The BRKGA was developed in C++ using the BRKGA API \citep{apiBRKGApaper,apiBRKGA}. 

\subsection{Benchmark instances}
\label{sec:instances}
The reference dataset includes graphs previously utilized in existing literature for other coloring concerns~\citep{MelQueSan21, Silvaetal2024}. It encompasses (a) random graphs, (b) geometric graphs, (c) bipartite graphs, (d) the complements of bipartite graphs, and (e) examples sourced from the second \textit{DIMACS Implementation Challenge}~\citep{dimacs}.

Instances (a)-(c) were produced using the graph generator \textit{ggen}~\citep{ggen} by \citet{MelQueSan21}. This set has been extended by \citet{Silvaetal2024} to include smaller graphs with up to 30 vertices. They encompass $|V| \in \{15, 20, 25, 30, 50, 60, 70, 80\}$ and were constructed employing $\eta \in \{0.2, 0.4, 0.6, 0.8\}$ as the likelihood of edge presence (for random and bipartite graphs) or edge existence when the Euclidean distance between nodes is less than or equal to $\eta$ (for geometric graphs). Instances (d) refer to the complements of the bipartite graphs specified in (c). Due to the randomness inherent in the instance generator, each instance group consists of five instances, where a group corresponds to a fusion of graph class, number of vertices, and Euclidean distance (or probability for random and bipartite graphs). The groups are denoted by $C\_|V|\_\eta$, where $C$ denotes the graph class: random (rand), geometric (geo), bipartite (bip), and complement of bipartite (cbip). The results are consolidated for each instance group and presented as the mean of the five instances within it. Notably, each graph class entails 80 instances. The bipartite graph class was excluded from the experiments on the connected Grundy coloring problem because it is a trivial case where $\Gamma_c(G)$ is always 2. However, this class was included in the experiments for the Grundy coloring problem as it is NP-hard.

Instances (e) are defined as a subset of those with up to 500 vertices from the Second DIMACS Implementation Challenge. This set comprises 42 instances with $|V| \in [28,500]$. The attributes of these instances (number of vertices and density) are shown in Table~\ref{tab:dimacs_instances} (Appendix \ref{sec:dimacs_instance}). These instances are extensively employed in literature, particularly for coloring and maximum clique problem~\citep{AvHeZu03, LuHa10, MoGo18, NoPiSu18, SaCoFuLj19, MelQueSan21}.

\subsubsection{Connecting disconnected graphs}
It is necessary to point out that not all instances of this set are connected. Consequently, the disconnected instances were made connected using an algorithm that aims to maintain the connected Grundy number of the resulting graph as close as possible to that of the original graph's connected component with the highest connected Grundy number. The vertex with the highest degree with the lowest index of each connected component is selected and a path is created between these vertices from the lowest index to the one with the highest index between them, making the instance connected. Figure~\ref{fig:connect_example} illustrates the connection behavior of a graph $G$ formed by the two components $H$ and $F$, showing that for most cases $\Gamma_c(G) = \max(\Gamma_c(H), \Gamma_c(F))$. Figure~\ref{fig:connect_example_a} illustrates the case in which the connected vertex $e$ received a color greater than one, as the coloring is connected, so $g$ comes after $e$ in the order $\sigma_c$ and necessarily receives the color one. This implies that if the two components were not connected it would still be possible to achieve the same coloring for each component. Figure~\ref{fig:connect_example_b} represents the another possible case, but it can be noted that as $g$ receives a color greater than one, then necessarily one of its neighbors will have to receive color one, and if there is no edge $eg$, so the coloring of the component containing $g$ can also be achieved by starting from the neighbor of $g$. The process can be repeated for each pair of components as long as it is guaranteed that a path is built between the vertices that will be connected, and for most of these cases $\Gamma_c(G) = \max(\Gamma_c(H), \Gamma_c(F))$. An example in which this is not valid is a graph with two isolated vertices, where $\max(\Gamma_c(F),\Gamma_c(H))+1 = \Gamma_c(G) = 2 $.

 \begin{figure}[!ht]
    \centering
    \subfigure[Connecting from $e$, with \mbox{$c(e) > 1$}.]{
\begin{tikzpicture}
	[scale=1.1,every node/.style={circle,draw=black}]
	\node[label=b, fill=blue!30] (1) at  (0, 2)   {$1$};
	\node[label=d, fill=red!30] (3) at  (1.5, 1.5)   {$2$};
        \node[label=e, fill=green!30] (4) at  (2, 2.5)   {$3$};
        \node[label=f, fill=blue!30] (5) at  (3, 1)   {$1$};
        \node[label=g, fill=blue!30] (6) at  (3, 3)   {$1$};
        \node[label=h, fill=red!30] (7) at  (4, 3)   {$2$};
        \node[label=i, fill=red!30] (8) at  (4, 4.5)   {$2$};
    \begin{scope}[line width=1.6pt,every node/.style={}]
	\draw (1) edge (3);
        \draw (1) edge (4);
        \draw (3) edge (4);
        \draw (3) edge (5);
        \draw (4) edge (5);
        \draw (6) edge (7);
        \draw (6) edge (8);
        \draw[red] (6) edge (4);
	\end{scope}
 \end{tikzpicture}
 \label{fig:connect_example_a}
 }
 \hspace{1.3cm}
 \subfigure[Connecting from $e$, with \mbox{$c(e) = 1$}.]{
\begin{tikzpicture}
	[scale=1.1,every node/.style={circle,draw=black}]
	\node[label=b, fill=green!30] (1) at  (0, 2)   {$3$};
	\node[label=d, fill=red!30] (3) at  (1.5, 1.5)   {$2$};
        \node[label=e, fill=blue!30] (4) at  (2, 2.5)   {$1$};
        \node[label=f, fill=green!30] (5) at  (3, 1)   {$3$};
        \node[label=g, fill=red!30] (6) at  (3, 3)   {$2$};
        \node[label=h, fill=blue!30] (7) at  (4, 3)   {$1$};
        \node[label=i, fill=blue!30] (8) at  (4, 4.5)   {$1$};
    \begin{scope}[line width=1.6pt,every node/.style={}]
	\draw (1) edge (3);
        \draw (1) edge (4);
        \draw (3) edge (4);
        \draw (3) edge (5);
        \draw (4) edge (5);
        \draw (6) edge (7);
        \draw (6) edge (8);
        \draw[red] (6) edge (4);
	\end{scope}
 \end{tikzpicture}
 \label{fig:connect_example_b}
 }
 \caption{Connecting two components using the highest degree vertex of each component, with the new edge depicted in red.}
    \label{fig:connect_example}
\end{figure}
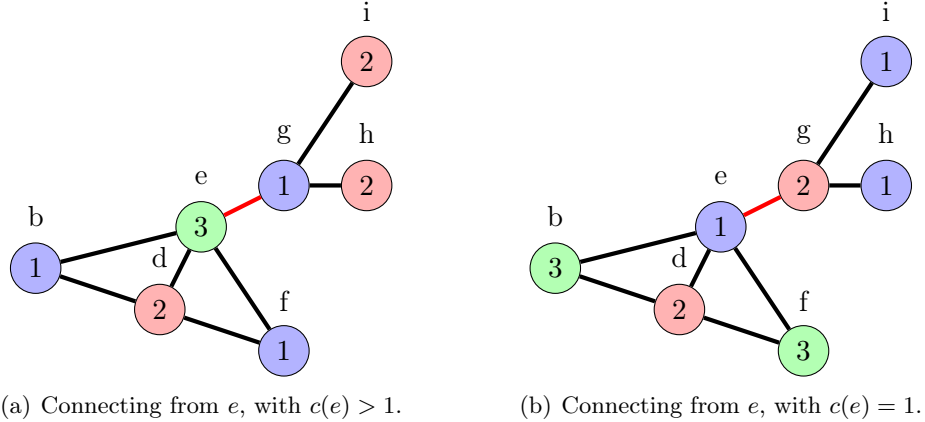

\subsection{Tested approaches and parameter settings}
\label{subsec:parameter}

The following approaches were considered in the computational experiments:
\begin{itemize}
    
    \item the standard formulation described in Section~\ref{sec:formgrundycoloring} (std);
    \item the formulation by representatives detailed in Section~\ref{sec:repformulations} (rep);
    \item the BRKGA with the restart and local search explained in Section \ref{sec:brkga} (BRKGA+R+LS);
    \item  a baseline BRKGA that does not employ restart or local search (BRKGA-B), whose working principle is similar to that of \cite{Silvaetal2024} for the Grundy coloring problem but with the decoder described in Algorithm \ref{alg:decoder_con_grundy}.
\end{itemize}

The solver was set with the default configurations and a single thread. A time limit of 3600 seconds (1 hour) was given for each formulation to solve each of the instances. A warm start (i.e., initial feasible solution) was provided to the formulations,  given by the best solution achieved using any of the following greedy heuristics:
\begin{itemize}
    \item \textit{connected minimum-degree first} (CMinDF), that defines a coloring sequence $(v_1,\ldots,v_n)$ prioritizing the vertices with lower degree described in Appendix \ref{sec:greedyheuristics}. 
    \item \textit{connected maximum-degree first} (CMDF), that defines a sequence $(v_1,\ldots,v_n)$ in which the highest degree vertex is iteratively chosen that is a neighbor of at least one previously chosen vertex;
    \item \textit{DSatur} \citep{Bre79}, that builds a sequence $(v_1,\ldots,v_n)$ using an adaptive criterion based on the maximum degree of saturation, where the degree of saturation of a vertex is equal to the number of vertices with different colors that are adjacent to it.
\end{itemize}

Notice that CMDF and DSatur are connected versions of some widely-used and well-established greedy criteria for the coloring problem (that aims to minimize the number of colors). However, it is known that they may sometimes produce colorings with a large number of colors.

The settings for the BRKGAs were defined as follows.
All executions were performed using a single thread.
A time limit of 300 seconds (five minutes) was defined as the stopping criterion instead of the number of generations. This choice makes it easier to control the total runtime and benefits efficient decoders.
The parameters were defined based on preliminary tests considering a subset of 28 instances and the following possible settings: $p$ = \{$1\times|V|$, $2\times|V|$, $3\times|V|$\}, $p_e$ = \{5\%, 15\%, 30\%\}, $p_m$ = \{5\%, 10\%, 30\%\} and $\rho_e$ = \{60\%, 70\%, 90\%\}. For the version with local search, the sizes considered for the population size were smaller to balance the time that will be spent on the local search, with the values considered $p$ = \{$1\times|V|$, $1.5\times| V|$, $1.7\times|V|$\}. The instances were chosen to be a representative sample, ensuring that at least five instances with varying densities and numbers of vertices were randomly selected from each class. We performed a grid search for parameter selection, i.e., all the possible parameter combinations were considered. Thus, in total, 81 configurations were tested for which the BRKGAs was run with five different seeds.
The considered values were based on how the metaheuristic converges over the generations by varying each of the parameters~\citep{gonccalves2011biased}.
Finally, the configuration selected for the overall experiments was $(p, p_e, p_m, \rho_e)$ = $(3\times|V|, 30\%, 10\%, 60\%)$ for BRKGA-B and $(p, p_e, p_m, \rho_e)$ = $(1.7\times|V|, 30\%, 10\%, 60\%)$ for BRKGA+R+LS.

Considering the BRKGA framework described in Section~\ref{sec:brkga}, we execute the BRKGA with reset and the local search described in Section~\ref{sec:NeighborhoodAlgorithm}. The parameter $g_{lim}$ was set to 2000 generations and the local search was applied to five solutions: the top-ranked solution with the highest fitness, along with four distinct solutions randomly chosen from the ELITE population in the generation where the best solution value improved. The parameter $g_{lim}$ was defined by analyzing the average number of generations required to improve the current best solution before the algorithm began to expend too many generations without further improvements. The criteria for selecting solutions for the local search was determined through a grid search conducted on the same sample of instances used for the other parameters.

\subsection{IP formulations results}
\label{subsec:ip_results}

Table \ref{tab:summary_ip} summarizes the results achieved by the standard formulation and the formulation by representatives for the instances with up to 30 vertices. The first column represents the graph class. The following columns provide, for each of the formulations, the average gap, average time, and total number of instances for which optimality was proven for each graph class. The penultimate row shows the average gap and time across all classes, while the last row displays the total number of instances for which optimality was achieved. More detailed results for the instances of classes (a), (b), and (d) with up to 30 vertices can be found in Appendix \ref{sec:results_ip_tiny}.


\begin{table}[!ht]
\small
\caption{Summary of the results using the IP formulations for instances with up to 30 vertices, separated by classes.}
\centering
\begin{tabular}{l|ccc|ccc}
\hline
\multicolumn{1}{c|}{} & 
\multicolumn{3}{c|}{std} & 
\multicolumn{3}{c}{rep} \\
class & gap(\%) & time(s) & \#opt & gap(\%) & time(s) & \#opt \\ 
\hline
 Geometric & 9.4 & 2455.8 & 27 & 14.3 & 2485.6 & 30\\
 Random & 33.9 & 3304.7 & 8 & 22.5 & 2553.4 & 29\\
 Complement & 10.1 & 1812.9 & 40 & 4.5 & 1373.1 & 53 \\
 \hline
Mean & 17.8 & 2524.4  & & 13.7 & 2137.3 \\
Total & & & 75 & & & 112  \\
\hline
\end{tabular}
\label{tab:summary_ip}
\end{table}

The results in Table~\ref{tab:summary_ip} show that the formulation by representatives outperformed the standard one. The graph class in which the formulations performed the best was the complement of bipartite graphs. This was an expected outcome for the formulation by representatives, as previously observed in the study by \citet{Silvaetal2024} on the Grundy coloring problem. Also, regarding the results presented in \citet{Silvaetal2024}, we can observe in our work a similar trend: the formulation by representatives performs better on denser instances and the standard formulation on sparser ones. However, the formulation by representatives performed better overall. This is evident not only in the geometric instances but especially in the random ones, where the standard formulation was able to prove far fewer optima. The most significant difference in favor of the formulation by representatives occurred in instances with 
$\eta = 0.8$, corresponding to a density close to 80\%, making them among the densest cases. 

It should be noted that $\Gamma_c(G) \leq \Gamma(G)$. Consequently, if for a given instance we know the value of the upper bound for $\Gamma(G)$ \citep{Silvaetal2024}, and we find a solution to the connected problem with the same value, then the solution is optimal for that instance. In total, the optimal solution was proved for 132 out of the 240 instances considering the two formulations. Moreover, as a result of this observation, it can be concluded that the optimal solution was found for a total of 178 instances considering the two formulations, which accounts for 74.16\% of the test cases with up to 30 vertices. 

The connected Grundy number problem proved challenging for both formulations. Experiments for instances with more than 50 vertices and from the DIMACS set using the default and single thread configurations did not generate satisfactory results. The solver only obtained a solution for 12\% of them. For the remaining cases, the process was killed by the computer due to memory overflow. Other attempts were made using multiple threads and changing the initial root relaxation method, but we could not achieve any significative improvement. 
Within these 12\% of instances for which an initial solution was obtained, the solver could only solve more than one node for a quarter of them. Considering this subset, we also increased the timeout to 10800 seconds (3h), but this led to a few more processes killed due to memory overflow. For these reasons, we do not perform further analysis of such experiments.

\subsection{BRKGA results}

In this section, we discuss the results achieved using the two BRKGA variants for the connected Grundy problem proposed in this work: BRKGA-B and BRKGA+R+LS. Tables \ref{tab:cbrkga_rand_rls}-\ref{tab:cbrkga_dimacs_rls} (see Appendix~\ref{sec:brkga_grundy_con}) present the results of the experiments. In the remainder of the paper, consider the percentage deviation between $val_1$ and $val_2$ to be $dev =100 \times (val_1 - val_2)/val_2$. 

\cite{Silvaetal2024} demonstrated that their BRKGA performs well in approximating $\Gamma(G)$. In the following, we will evaluate whether BRKGA-B can also approximate well the value $\Gamma_c(G)$. To do that, we compare the values obtained by BRKGA-B with those achieved using the BRKGA of \cite{Silvaetal2024} for the Grundy coloring problem. Thus, we can place it as a baseline when demonstrating that BRKGA-B can generate good results. Remember that $\Gamma_c(G) \leq \Gamma(G)$. Figure \ref{fig:conexo_geral} presents boxplots of the percentage deviations between the average solutions found by BRKGA-B for the connected Grundy coloring problem ($val_1$) and the average solutions obtained by \cite{Silvaetal2024}'s BRKGA for the Grundy coloring problem ($val_2$). As shown in Figure \ref{fig:conexo_geral}, the proposed BRKGA-B exhibits good performance, with the median of the percentage deviations close to 0, as well as narrow lower and upper bounds across these graph classes, despite tackling a more constrained problem.

\begin{figure}[!ht]
    \centering
    \includegraphics[width=0.5\linewidth]{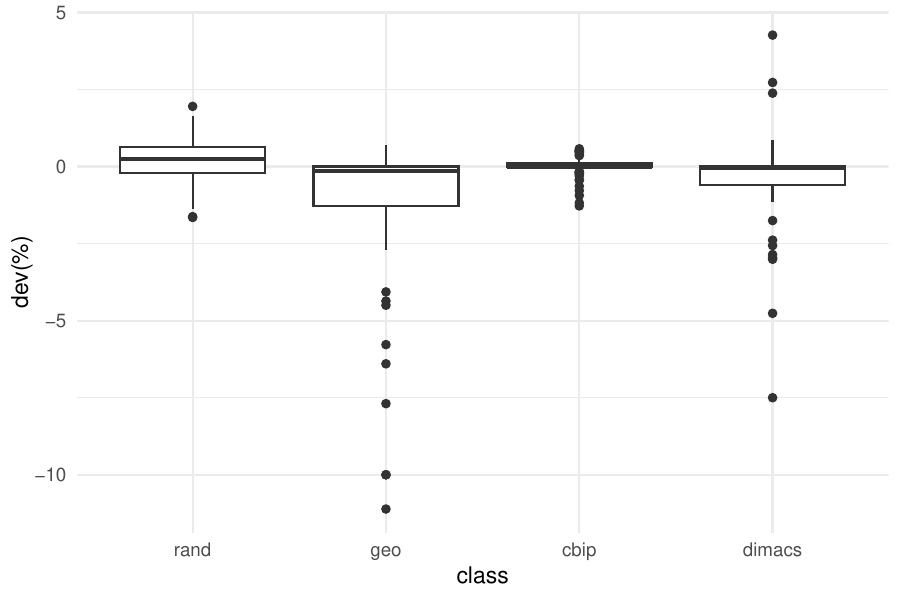}
    \caption{Boxplot of the percentage deviation of the average number of colors found by the \mbox{BRKGA-B} to the connected Grundy coloring problem to the average solution value of BRKGA for the Grundy coloring problem separated by class. All 282 instances tested by both methods are considered, consisting of all the DIMACS instances and those with at least 50 vertices of classes (a), (b), and (d).}
    \label{fig:conexo_geral}
\end{figure}

As previously discussed, BRKGA is a metaheuristic that finds good-quality solutions for the Grundy coloring problem \citep{Silvaetal2024}, and Figure \ref{fig:conexo_geral} shows that it also performs well for its connected version. However, it tends to converge quickly, with the class of graph requiring the longest average time to find the best solution taking less than 66 seconds. The reset mechanism and the local search procedure are strategies to overcome this issue: the reset initiates a new search, potentially exploring a different region, while the local search leverages part of the time to explore the neighborhood of a good solution in the current population trying to improve the best solution found. Table \ref{tab:summary_cbrkga} provides a summary of the results comparing BRKGA-B and BRKGA+R+LS, organized by graph class (first column). The second and third columns show the average percentage difference in the number of colors used for each class and the number of instances for which BRKGA+R+LS at least matched BRKGA-B. The fourth and fifth columns present similar information but focus on the maximum number of colors obtained. The last column indicates the number of instances for which BRKGA+R+LS outperformed BRKGA-B when considering the maximum number of colors.

\begin{table}[h]
\small
\caption{Summary of the results of BRKGA+R+LS compared with BRKGA-B, separated by classes.}
\centering
\begin{tabular}{l|cr|crc}
  \hline
class & $diff_{mean}$ & $\geq_{mean}$ & $diff_{max}$ & $\geq_{max}$ & $>_{max}$ \\ 
\hline
Random & 1.03 & 71/80 & 0.55 & 73/80 & 18/80\\
Geometric & 0.17 & 60/80 & 0.09 & 77/80 & 04/80\\
Complement & 0.10 & 63/80 & 0.00 & 80/80 & 00/80\\
DIMACS & 0.07 & 26/42 & 0.10 & 34/42 & 09/42\\
\hline
\end{tabular}
\label{tab:summary_cbrkga}
\end{table}

For the random instances, BRKGA+R+LS outperformed or matched BRKGA-B in 71 out of 80 cases regarding the average number of colors used, achieving an average improvement of 1.03\%. For the maximum number of colors, it achieved the same result or a better one in 73 cases, with an average improvement of 0.55\%. For the other instance classes, improvements in the maximum number of colors were smaller, with 0.09\% for the geometric instances and 0.10\% for the DIMACS instances. In these two classes, the same solution or a better one was found in 77 out of 80 and 36 out of 42 cases, respectively, despite the overall improvement being minor.

For the complement of bipartite graphs, no improvement was observed for the maximum number of colors when using BRKGA+R+LS. The same maximum number of colors was achieved for all the instances using BRKGA-B and BRKGA+R+LS, a predictable outcome given the high density of these graphs. It is also noticeable that, for the geometric and complement of bipartite graphs, BRKGA-B performed better for dense instances with at least 70 vertices. Out of the 282 instances tested, the same maximum was achieved in 234 cases, with differences in 48 cases, of which BRKGA+R+LS achieved improvements in 31 (Table~\ref{tab:summary_cbrkga}). In the following, we will provide a more detailed analysis of these differences regarding the maximum and average solutions. 

\begin{figure}
    \centering
    \subfigure[Boxplot of the percentage deviation of BRKGA+R+LS and BRKGA-B from the average solution value of BRKGA-B considering all the runs, separated by class.]{\includegraphics[width=0.42\linewidth]{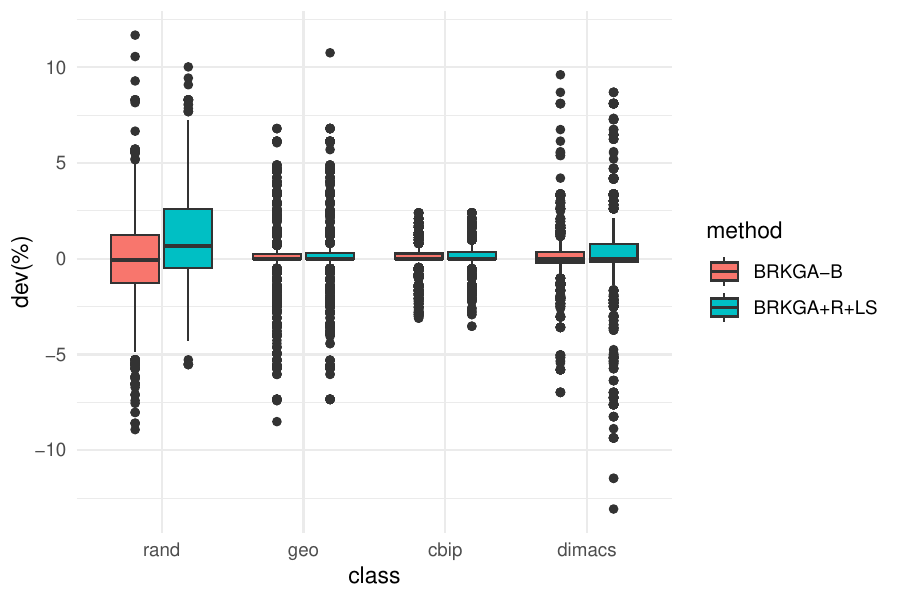}
    \label{fig:cbrkga_devmean}
    }
    \hspace{0.2cm}
    \subfigure[Boxplot of the percentage deviation of the best solution from BRKGA+R+LS and BRKGA-B where the best solutions value differ between the approaches, separated by class.]{\includegraphics[width=0.42\linewidth]{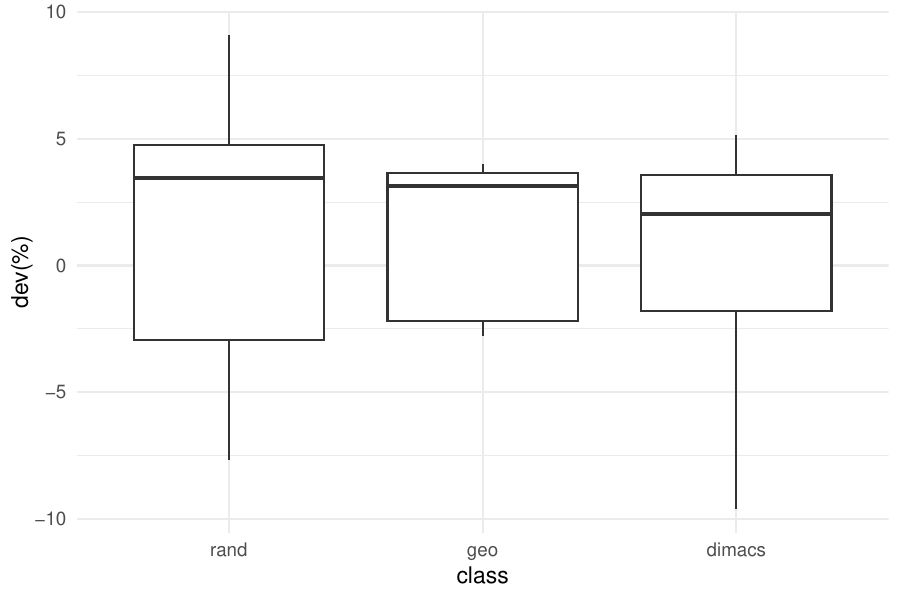}
    \label{fig:cbrkga_devmax}
    }
    \caption{Boxplots comparing the percentage deviations between BRKGA+R+LS and BRKGA-B. All the 282 instances tested by both methods are considered, consisting of all the DIMACS instances and those with at least 50 vertices from classes (a), (b), and (d).}
    \label{fig:cbrkga_dev}
\end{figure}

Figure \ref{fig:cbrkga_devmean} shows the deviations between the solutions achieved by BRKGA+R+LS and BRKGA-B for each execution of a instance ($val_1$) and the average of all the runs of BRKGA-B ($val_2$) for that instance. From Figure \ref{fig:cbrkga_devmean}, we can observe that the median deviation for every boxplot is close to 0\%, with a small interquartile range also near zero, particularly for the geometric, complement of bipartite, and DIMACS instances. For the random instances, the range is larger, with a greater proportion of positive values, especially for BRKGA+R+LS. This trend can also be observed for the DIMACS instances. This suggests that a solution obtained by BRKGA+R+LS in a single execution tends to be at least as good as the average solutions of BRKGA-B. The results indicate that, for isolated runs, it is preferable to choose BRKGA with reset and local search, particularly for random graphs. Additionally, a significant number of outliers can be found in some classes, but as previously noted, when considering the average of both methods, they tend to yield similar results most of the time, with the BRKGA+R+LS having some advantage. 

Figure \ref{fig:cbrkga_devmax} shows the deviations between the best solution obtained by BRKGA+R+LS for a given instance ($val_1$) with the best solution achieved by BRKGA-B ($val_2$). For a clearer visualization of distinct cases, all instances for which both methods yielded the same best solution value were discarded, resulting in 48 out of the 282 instances (25 random, 7 geometric, and 16 DIMACS). Thus, the complement of bipartite class does not appear in the graph. The boxplots show that the median is positive across the classes and closer to the third quartile, favoring BRKGA+R+LS. However, it is noteworthy that the first quartile remains negative. The random instances exhibited the greatest variation between minimum and maximum values, while the geometric instances had the least variation and the DIMACS instances had the lowest minimum. Nonetheless, DIMACS presented the smallest interquartile range, with a positive median. Considering these DIMACS instances, the mean deviation was also positive at 0.28\%.
\section{Adapting the local-search enhanced BRKGA to tackle the Grundy coloring problem}
\label{sec:adaptinggrundy}

In this section, we propose an approach to enhance the results of the Grundy coloring problem using the BRKGA with the decoder presented by \citet{Silvaetal2024}. 
We use the decoder from the literature but with the framework described in Section~\ref{sec:brkga} and the local search presented in Section~\ref{sec:NeighborhoodAlgorithm}. It is worth mentioning that Propositions \ref{lemma:worstcase}, \ref{lemma:intervalEqualLeft}, \ref{lemma:intervalEqualRight}, \ref{lemma:criticalposition}, and Corollaries \ref{col:minequalcolors}, \ref{col:maxdiffcolors}, \ref{col:criticalpositions}, do not depend on the connectivity constraint and are also valid for the Grundy coloring problem. The difference in $\mathcal{N}(S)$ for this problem is that any move operation generates a new valid sequence, so for any graph $|\mathcal{N}(S)|= O(V^2)$, if we do not consider any of the prunings resulting from the theoretical results presented in Section \ref{sec:neighborhoodanalysis}.

\begin{itemize}
    \item \textbf{BRKGA} + \textbf{R}eset + \textbf{L}ocal \textbf{S}earch (BRKGA+R+LS): The only difference in the local search algorithm (Section \ref{sec:NeighborhoodAlgorithm}) is that there is no connectivity check when performing a move apart from the checks made through the \textit{canMoveLeft} and \textit{canMoveRight}. We conducted the local search on five solutions: the top-ranked solution with the highest fitness, along with four distinct solutions randomly chosen from the ELITE population in the generation in which the best solution value improved. The other parameters used were the same as those used by \citet{Silvaetal2024}, except for the population size that was reduced to allow the BRKGA to run for more generations to compensate for the time spent on local search. Therefore, the parameters were $(p, p_e, p_m, \rho_e)$ = $(1.7\times|V|, 30\%, 10\%, 60\%)$. The value $g_{lim}$ was set to 2000 generations.\label{item:oconfig}
\end{itemize}

\subsection{Experiments and results}

In this section we summarize the results of the experiments with BRKGA+R+LS for the Grundy coloring problem and make a comparative analysis with the results obtained by the BRKGA of \cite{Silvaetal2024}, hereafter identified as BRKGA-G. We conducted the experiments in the same manner as performed by \cite{Silvaetal2024}. Each approach was run 50 independent executions, given a time limit of 300 seconds per run, with the other parameters configurations described in introduction of Section~\ref{item:oconfig}. Table \ref{tab:summary_brkga} presents a summary of the results comparing BRKGA+R+LS with \mbox{BRKGA-G}, separated by graph class (first column).
The second and third columns present the average percentage difference in the number of colors used for each class and the number of instances in which BRKGA+R+LS was better than or equal to BRKGA-G. The fourth and fifth columns are analogous, but refer to the maximum colors obtained. The last column indicates the number of instances for which BRKGA+R+LS outperformed BRKGA-G regarding the maximum number of colors.
More detailed results are presented in Appendix~\ref{sec:brkga_results_grundy}.  

 \begin{table}[!ht]
\small
\caption{Summary of the results of the BRKGA+R+LS compared with BRKGA-G (literature), separated by classes for the Grundy coloring problem.}
\centering
\begin{tabular}{l|cr|crc}
  \hline
class & $\mathit{diff}_{mean}$ & $\geq_{mean}$ & $\mathit{diff}_{max}$ & $\geq_{max}$ & $>_{max}$ \\ 
\hline
Random & 1.32 & 77/80 & 0.31 & 76/80 & 10/80\\
Geometric & 0.23 & 71/80 & 0.17 & 79/80 & 06/80 \\
Bipartite & 2.38 & 80/80 & 1.04 & 79/80 & 12/80\\
Complement & 0.17 & 73/80 & 0.00 & 80/80 & 00/80\\
DIMACS & 0.04 & 32/42 & 0.50 & 37/42 & 08/42\\
\hline
\end{tabular}
\label{tab:summary_brkga}
\end{table}
 
For the random graphs, the new approach improved the average number of colors obtained in each group. Regarding the maximum number of colors achieved, BRKGA+R+LS matched or improved the average and maximum number of colors in 77 and 76 out of 80 instances, respectively, only producing worse solutions for the group \textit{rand\_80\_0.4} on the average number of colors. For the geometric class, the average number of colors was better than or equal to the one available in the literature for 71 out of 80 instances, having an average improvement of 0.23\%. For the maximum number of colors, at least matching the results in 79 out of 80 cases and having an average improvement of 0.17\%. For the bipartite graphs, the BRKGA+R+LS approach also improved the average number of colors across all groups, being better than or equal to for all instances, having a improvement of 2.38\% in this class. In terms of the maximum number of colors, it was better than or equal to in 79 out of 80 instances, with a 1.04\% improvement in the average maximum number of colors for this instances. For the complement of bipartite graphs, considering the average number of colors, it managed to be better than or equal to in 73 of the 80 cases with a small percentage increase, while for the maximum it achieved the same values for all cases. However, this result was expected, given that these are very dense instances, where any feasible solution tends to use a high number of colors. Therefore, even small percentage increases are significant, and improvements in the averages were observed in 14 out of 16 groups (Table \ref{tab:brkga_cbip_rls}, Appendix \ref{sec:brkga_results_grundy}).

The DIMACS instances exhibit the greatest variability in the quality of results for the new approach (Table \ref{tab:brkga_dimacs_rls}, Appendix \ref{sec:brkga_results_grundy}). Considering the average number of colors, it improved in 19 cases, and better than or equal to for 37 of 42 instances, but in the instance \textit{hamming8-2}, it reached a $\mathit{diff}_m$ of -7.69\%. This instance has 256 vertices and a very high density of 0.96 (almost that of a complete graph), which caused the execution time to be primarily consumed during the first local search. This is due to the total number of neighbors being $O(256^2)$, and the pruning techniques used to optimize the neighborhood size were not highly effective, similar to the coloring of each neighbor. Nevertheless, the overall average for the set remained positive with 0.04\% of improvement. Regarding the maximum number of colors, there was also significant variability, with improvements in some cases and worse results in others. The $\mathit{diff}_x$ ranged from -6.21\% to 7.69\%, with the final average for the maximum number of colors being 0.50\%. Considering all instances, a new best solution was found in 36 cases, having an overall average increase in maximum colors obtained of 0.39\% and an average increase of 0.91\% in average colors across all runs.

Table \ref{tab:summary_brkga} shows that most of the improvements in the best solution were found in the random and bipartite graph classes, with 10 and 12 instances, respectively. Additionally, better solutions were found regarding the maximum number of colors in six geometric and eight DIMACS instances. Overall, new best solutions were achieved for 36 out of the 362 instances, representing an improvement in 9.9\% of the cases.

Figure \ref{fig:brkga_boxplot_dev} considers the deviation between the solutions achieved using BRKGA-G and BRKGA+R+LS for each execution of an instance, and the average of all the runs of the BRKGA-G for the same instances provided in \cite{Silvaetal2024}. Thus, the deviation was computed for every execution of the BRKGA+R+LS. From Figure \ref{fig:brkga_boxplot_dev}, it can be stated that the new approach consistently outperforms the literature for the random and bipartite graph classes where the boxplots are shifted up. For the other classes, there is no clear trend of surpassing the average solution over multiple runs. In fact, the results suggest that, in the median case, the solutions will be equivalent, with occasional instances where the solutions may be either better or worse. Figure \ref{fig:brkga_boxplot_dev_0} compares the deviation between the best solution found by BRKGA+R+LS and that obtained by BRKGA-G for each instance. To facilitate the visualization of distinct cases, all instances for which both methods achieved the same solution were removed, leaving 47 instances (14 random, 7 geometric, 13 bipartite, and 13 DIMACS). Consequently, the complement of bipartite class does not appear in the figure, as the maximum number of colors found by both approaches was identical in all cases. Considering only the distinct cases, Figure \ref{fig:brkga_boxplot_dev_0} reveals favorable performance by BRKGA+R+LS, with positive medians across all presented cases. Additionally, since the analysis does not focus on individual instances, it can be observed that most of these cases show substantial improvements, with a median close to 2.5\% for the DIMACS instances, above 2.5\% for the random ones, and over 5.0\% for the bipartite graphs. In general, when the best solutions obtained by each method differed, BRKGA+R+LS outperformed BRKGA-G.

\begin{figure}[!ht]
    \centering
    \subfigure[Boxplot of the percentage deviation of BRKGA+R+LS and BRKGA-B from the average solution value of BRKGA-G considering all the runs, separated by class.]{
    \includegraphics[width=0.43\linewidth]{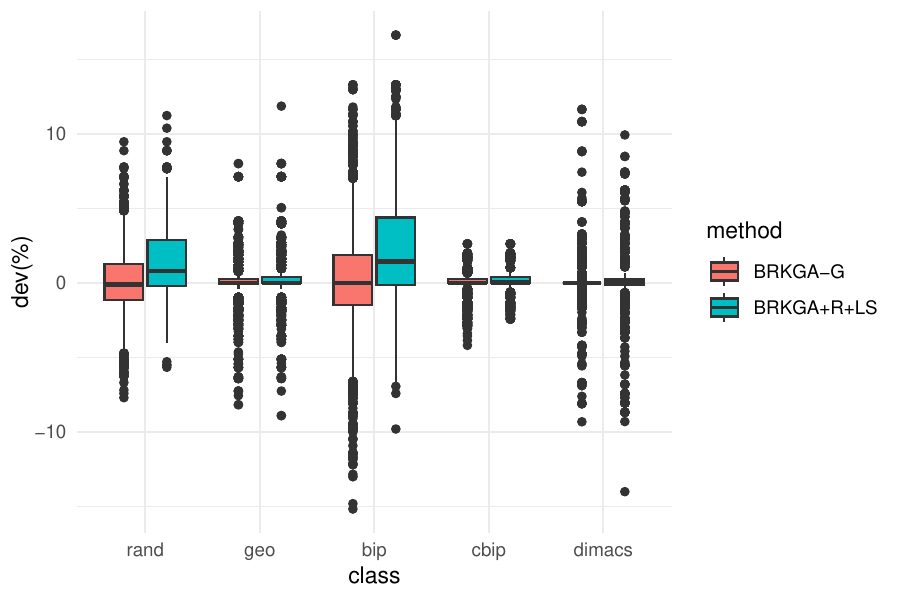}
    \label{fig:brkga_boxplot_dev}
    }
    \hspace{0.2cm}
    \subfigure[Boxplot of the percentage deviation of the best solution from BRKGA+R+LS and BRKGA-G where the best solutions value differ between the approaches, separated by class.]{
    \includegraphics[width=0.43\linewidth]{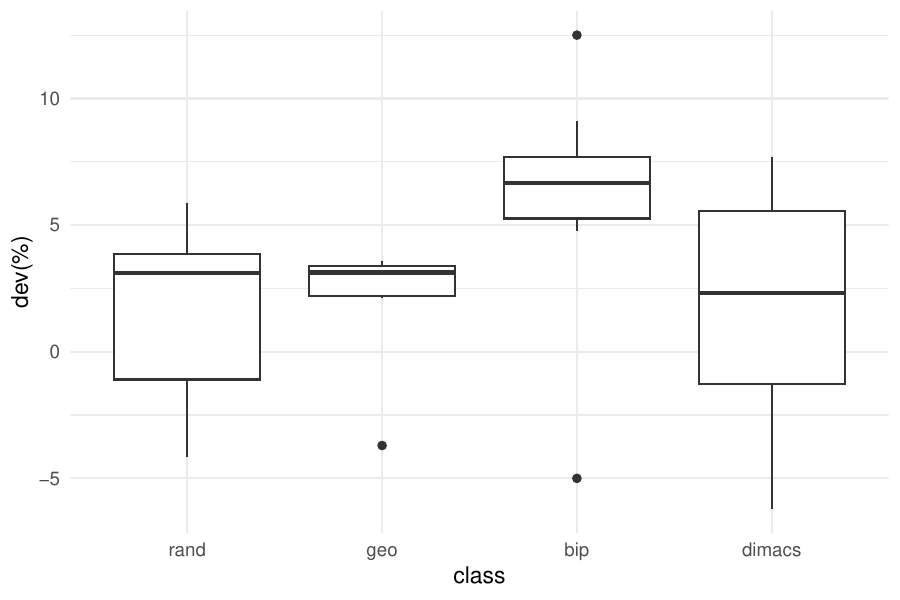}
    \label{fig:brkga_boxplot_dev_0}
    }
    \caption{Boxplots comparing the percentage deviations between BRKGA+R+LS and BRKGA-G solutions (literature) for the Grundy coloring problem. All the 362 instances tested by both methods are considered, consisting of all the DIMACS instances and those from classes (a)-(d) with at least 50 vertices.}
\end{figure}

\section{Concluding remarks}
\label{sec:conclusions}

In this paper, we considered the connected Grundy coloring problem.
We presented the first integer programming formulations for the problem: a standard formulation and a formulation by representatives. Additionally, we proposed a local-search enhanced BRKGA.
To the best of our knowledge, our approaches are the first general optimization methods for the connected Grundy coloring problem.
Moreover, we performed a theoretical analysis of the neighborhood employed in the local search procedure.
We carried out extensive experiments to evaluate the performance of the approaches.

The computational results indicate that the integer programming formulations are suitable for instances with up to 30 vertices.  The formulation by representatives performs better than the standard formulation, particularly for denser instances. We remark that a similar behavior was observed for the Grundy coloring problem \citep{Silvaetal2024}. Additionally, in graphs with up to 30 vertices, 178 optima were found out of the 240 tested instances (74.16\%), although only 132 were directly solved to optimality by the formulations. This suggests that the formulations effectively find high-quality solutions, and even optima ones, but they face challenges to prove optimality.

The tests for the connected Grundy problem using BRKGA-B and BRKGA+R+LS demonstrated that these approaches perform well, achieving good-quality solutions. They were used to successfully tackle instances with up to 500 vertices. The results indicate that BRKGA+R+LS generally outperforms BRKGA-B in terms of the average total number of colors obtained, particularly for random graphs. The two BRKGA variants reached the same best solutions for most of the instances, except in 48 cases. Out of these, BRKGA+R+LS achieved the best results for 31, where the reset and local search address fast convergence, facilitating a broader exploration of the search space and leading to equal or improved solutions. 

Our study also demonstrated that combining a reset mechanism and local search with the BRKGA is effective for the Grundy coloring problem, achieving new best-known solutions for 36 instances. Additionally, BRKGA+R+LS exhibited robust performance, surpassing BRKGA-G in both average and best solutions, particularly for the random and bipartite graph classes. Overall, the results emphasize the importance of combining reset and local search to address the BRKGA's fast convergence for these problems, enabling a broader exploration of the search space. 

\vspace{0.8cm}

{
\noindent \small 
\textbf{Acknowledgments:} 
This study was financed in part by the Coordenação de Aperfeiçoamento de Pessoal de Nível Superior – Brasil (CAPES) – Finance Code 001.
This work was partially supported by the Brazilian National Council for Scientific and Technological Development (CNPq) research grant 314718/2023-0 and the FAPESB INCITE PIE0002/2022 grant.
}

\bibliographystyle{abbrvnat}
\bibliography{99_main}


\newpage

\begin{appendices}
\section{An adaptive greedy heuristic}
\label{sec:greedyheuristics}

In this section, we propose an BFS-based adaptive criterion for obtaining connected Grundy colorings, denoted the connected smallest-degree first. 
It defines an order by giving priority to the lowest-degree vertex in the graph induced by the vertices that did not yet receive a color, in an expansion based on a BFS.
The criterion is adaptative as each new color assignment modifies the graph used to make the next greedy choice.

The \textit{connected smallest-degree first} defines a sequence $(v_1,\ldots,v_n)$ for the connected Grundy coloring problem as described in Algorithm~\ref{alg:connectedsmallestdegreefirst}. Notice that the algorithm assumes a connected graph. Nevertheless, it can be easily modified to deal with graphs containing more than one connected component. Line \ref{l:c01}-\ref{l:c03} initialize every vertex as not in sequence with color 0, an auxiliary set $Q$ that stores the next possible vertices for a connected sequence and $H$ to be an auxiliary graph. Lines \ref{l:c04}-\ref{l:c07} select the first vertex, insert to $Q$, and sign the vertex as 1. The loop \ref{l:c08}-\ref{l:c16} selects the vertex $v_j$ in the set $Q$ with the lowest degree in $H$, adjacent to at least one vertex in the sequence, and removes it from both the set and the auxiliary graph. The inner loop \ref{l:c10}-\ref{l:c13} adds previously uninserted adjacent vertices of $v_j$ to set $Q$. Finally, in line \ref{l:c17} return a connected sequence.

\begin{algorithm}[H]
\small
\caption {Connected smallest-degree first ($G$)}
\label{alg:connectedsmallestdegreefirst}
    $reached_v \leftarrow 0$ for every $v\in V$\;\label{l:c01}
    $Q \leftarrow \varnothing$\;\label{l:c02}
    $H \leftarrow G$\;\label{l:c03}
    $v_1 \leftarrow$ vertex $u$ with minimum degree in $H$\; \label{l:c04}
    Add $v_1$ to $Q$\; \label{l:c05}
    $reached_{v_1} \leftarrow 1$\;\label{l:c06}
    $j\leftarrow 1$\;\label{l:c07}
    \While{$Q\neq \varnothing$}{ \label{l:c08}
        $v_j \leftarrow$ vertex $u$ in $Q$ with minimum degree in $H$\;\label{l:c09}
        \ForEach{neighbor $u$ of $v_j$ in $H$}{\label{l:c10}
            \If{$reached_u = 0$}{\label{l:c11}
                Add $u$ to $Q$\;\label{l:c12}
                $reached_u \leftarrow 1$\;\label{l:c13}
            }
        
        }
        $H \leftarrow H - v_{j}$\;\label{l:c14}
        Remove $v_j$ from $Q$\;\label{l:c15}
        $j \leftarrow j+1$\;\label{l:c16}
    }
    \Return the sequence $(v_1,v_2,\ldots,v_n)$\; \label{l:c17}
\end{algorithm}

\clearpage

\section{IP results for the smaller instances with up to 30 vertices}
\label{sec:results_ip_tiny}

Tables \ref{tab:grundy_geo_graph_leq_30}-\ref{tab:grundy_cbip_graph_leq_30} 
summarize the results using the formulations to tackle smaller instances with up to 30 vertices (see Section~\ref{subsec:ip_results}). In these tables, the first column represents the instance group so that each row corresponds to the average values over its five instances. The second column (cub) provides the best combinatorial upper bound, considering $\zeta(G)$, $\Delta_2(G)+1$, and $\Psi(G)$. The third column (h) indicates the average value of the initial heuristic solution value provided for the formulation. In the following, for each of the formulations, the columns indicate the average of the best-obtained solutions (best), the average of the dual bound achieved by the solver at the end of the execution (ub), the average optimality \textit{gap} in percent, calculated for each individual instance as $\frac{ub - best}{best}$ (notice that here we refer to the best solution and the bound for a specific instance, not the average), the average execution time (time), and the number of instances solved to optimality (\#opt).
The last two rows of the tables indicate the average of the values across all rows and the total number of instances solved to optimality. The largest values in the columns best are highlighted in bold.

\begin{table}[!ht]
\centering
\footnotesize
\captionsetup{font=small}
\caption{Results using the formulations for the geometric graphs with at most 30 vertices.}
\begin{tabular}{lrr|rrrrr|rrrrr}
  \hline
  \multicolumn{3}{c|}{} &
  \multicolumn{5}{c|}{std} & 
  \multicolumn{5}{c}{rep} \\
group & cub & h & best & ub & gap & time & \#opt & best & ub & gap & time & \#opt \\ 
     \hline
geo\_15\_0.2 &  4.4 &  \textbf{3.6} &  \textbf{3.6} &  3.6 &  0.0 &    2.0 & 5 &  \textbf{3.6} &  3.8 &  6.7 & 1291.6 & 4 \\ 
  geo\_15\_0.4 &  7.5 &  6.0 &  \textbf{6.8} &  7.0 &  3.6 & 1137.0 & 3 &  \textbf{6.8} &  7.2 &  7.2 & 2464.6 & 2 \\ 
  geo\_15\_0.6 & 10.8 &  9.6 & \textbf{10.2} & 10.8 &  6.0 & 2211.7 & 2 & \textbf{10.2} & 10.6 &  4.2 & 1449.4 & 3 \\ 
  geo\_15\_0.8 & 12.8 & 12.0 & \textbf{12.4} & 12.8 &  3.5 & 1445.2 & 3 & \textbf{12.4} & 12.4 &  0.0 &    1.0 & 5 \\ 
  geo\_20\_0.2 &  4.4 &  3.6 &  \textbf{3.8} &  3.8 &  0.0 &   37.9 & 5 &  3.6 &  4.8 & 33.3 & 2880.3 & 2 \\ 
  geo\_20\_0.4 & 11.2 &  8.2 &  9.2 & 11.2 & 22.0 & 3600.0 & 0 &  \textbf{9.6} & 11.2 & 17.3 & 3600.0 & 0 \\ 
  geo\_20\_0.6 & 13.2 & 10.6 & 12.0 & 13.2 & 10.2 & 3600.0 & 0 & \textbf{12.2} & 13.0 &  6.8 & 3027.8 & 1 \\ 
  geo\_20\_0.8 & 16.6 & 15.0 & 15.6 & 16.6 &  6.6 & 2886.5 & 1 & \textbf{16.0} & 16.0 &  0.0 &   30.9 & 5 \\ 
  geo\_25\_0.2 &  5.0 &  4.4 &  \textbf{4.6} &  4.6 &  0.0 &   110.2 & 5 & \textbf{4.6} &  6.0 & 32.9 & 3600.0 & 0 \\ 
  geo\_25\_0.4 & 12.6 &  9.4 & \textbf{10.6} & 12.6 & 20.0 & 3600.0 & 0 & 10.2 & 12.6 & 24.4 & 3600.1 & 0 \\ 
  geo\_25\_0.6 & 17.6 & 14.4 & \textbf{15.8} & 17.6 & 11.5 & 3600.1 & 0 & 15.6 & 17.6 & 13.0 & 3600.1 & 0 \\ 
  geo\_25\_0.8 & 20.8 & 18.0 & 19.4 & 20.8 &  7.3 & 3600.1 & 0 & \textbf{19.6} & 19.8 &  1.1 &  998.3 & 4 \\ 
  geo\_30\_0.2 &  6.8 &  5.0 & \textbf{5.6} &  6.2 & 10.7 & 2509.8 & 2 &  5.4 &  7.0 & 32.3 & 3600.1 & 0 \\ 
  geo\_30\_0.4 & 15.2 & 10.8 & \textbf{12.6} & 15.2 & 21.6 & 3600.1 & 0 & 12.0 & 15.2 & 27.7 & 3600.2 & 0 \\ 
  geo\_30\_0.6 & 22.2 & 17.4 & \textbf{18.8} & 22.2 & 18.5 & 3600.3 & 0 & 18.4 & 22.0 & 20.0 & 3600.1 & 0 \\ 
  geo\_30\_0.8 & 25.0 & 21.6 & 22.8 & 25.0 &  9.9 & 3600.5 & 0 & \textbf{23.4} & 23.8 &  1.9 & 2198.9 & 4 \\ 
   \hline
   Mean & 12.8 & 10.6 & \textbf{11.4} & 12.7 & 9.4 & 2455.8 &  & \textbf{11.4} & 12.7 & 14.3 & 2485.6 & \\
   Total &  &  &  & & & & 27 & & &  &  & \textbf{30}  \\
   \hline  
\end{tabular}
\label{tab:grundy_geo_graph_leq_30}
\end{table}

\begin{table}[!ht]
\centering
\footnotesize
\captionsetup{font=small}
\caption{Results using the formulations for the random graphs with at most 30 vertices.}
\begin{tabular}{lrr|rrrrr|rrrrr}
  \hline
  \multicolumn{3}{c|}{} &
  \multicolumn{5}{c|}{std} & 
  \multicolumn{5}{c}{rep} \\
group & cub & h & best & ub & gap & time & \#opt & best & ub & gap & time & \#opt \\ 
   \hline
rand\_15\_0.2 &  5.4 &  3.8 &  \textbf{4.2} &  4.2 &  0.0 &   24.1 & 5 &  \textbf{4.2} &  4.6 &  9.0 & 2102.4 & 3 \\ 
  rand\_15\_0.4 &  8.4 &  5.4 &  \textbf{6.6} &  7.6 & 14.3 & 3159.9 & 1 &  \textbf{6.6} &  7.0 &  6.9 & 1983.6 & 3 \\ 
  rand\_15\_0.6 & 10.4 &  6.6 &  \textbf{8.8} & 10.4 & 18.0 & 3600.0 & 0 &  \textbf{8.8} &  9.0 &  2.5 &  856.5 & 4 \\ 
  rand\_15\_0.8 & 12.6 &  9.6 & \textbf{10.8} & 12.6 & 16.9 & 3600.0 & 0 & \textbf{10.8} & 10.8 &  0.0 &    5.3 & 5 \\ 
  rand\_20\_0.2 &  6.8 &  4.2 & \textbf{5.4} &  6.4 & 17.3 & 2975.3 & 1 &  5.2 &  6.8 & 31.0 & 3600.0 & 0 \\ 
  rand\_20\_0.4 & 11.8 &  6.8 &  8.4 & 10.8 & 28.3 & 3600.0 & 0 & \textbf{8.6} & 10.8 & 25.5 & 3600.0 & 0 \\ 
  rand\_20\_0.6 & 14.0 &  9.2 & 10.2 & 14.0 & 37.3 & 3600.0 & 0 & \textbf{11.0} & 11.6 &  5.8 & 2486.0 & 3 \\ 
  rand\_20\_0.8 & 17.2 & 11.4 & 12.8 & 17.2 & 34.7 & 3600.0 & 0 & \textbf{13.2} & 13.2 &  0.0 &   79.8 & 5 \\ 
  rand\_25\_0.2 &  9.4 &  5.4 & \textbf{6.8} &  8.4 & 23.8 & 3516.6 & 1 & \textbf{6.8} &  9.4 & 38.6 & 3600.0 & 0 \\ 
  rand\_25\_0.4 & 13.8 &  7.6 & \textbf{8.8} & 13.8 & 56.7 & 3600.0 & 0 & \textbf{8.8} & 13.6 & 54.7 & 3600.0 & 0 \\ 
  rand\_25\_0.6 & 18.0 & 10.6 & 11.8 & 18.0 & 53.1 & 3600.1 & 0 & \textbf{12.6} & 15.6 & 24.6 & 3600.0 & 0 \\ 
  rand\_25\_0.8 & 21.6 & 13.4 & 15.2 & 21.6 & 42.3 & 3600.1 & 0 & \textbf{16.6} & 16.6 &  0.0 &  978.6 & 5 \\ 
  rand\_30\_0.2 &  9.6 &  5.8 & \textbf{6.8} &  9.0 & 32.4 & 3600.0 & 0 &  6.6 &  9.6 & 46.2 & 3600.2 & 0 \\ 
  rand\_30\_0.4 & 16.0 &  8.2 & \textbf{9.6} & 16.0 & 66.9 & 3600.1 & 0 & \textbf{9.6} & 16.0 & 67.1 & 3600.2 & 0 \\ 
  rand\_30\_0.6 & 21.4 & 11.8 & \textbf{13.8} & 21.4 & 56.0 & 3600.2 & 0 & \textbf{13.8} & 19.0 & 38.1 & 3600.2 & 0 \\ 
  rand\_30\_0.8 & 25.6 & 15.2 & 17.6 & 25.6 & 45.7 & 3600.3 & 0 & \textbf{18.4} & 20.2 & 10.1 & 3561.6 & 1 \\ 
   \hline
   Mean & 13.8 & 8.4 & 9.8 & 13.5 & 33.9 & 3304.7 &  & \textbf{10.1} & 12.1 & 22.5 & 2553.4 & \\
   Total &  &  &  & & & & 8 & & &  &  & \textbf{29}  \\
   \hline
\end{tabular}
\label{tab:grundy_rand_graph_leq_30}
\end{table}

\begin{table}[!ht]
\centering
\footnotesize
\captionsetup{font=small}
\caption{Results using the formulations for the complements of bipartite graphs with at most 30 vertices.}
\begin{tabular}{lrr|rrrrr|rrrrr}
  \hline
  \multicolumn{3}{c|}{} &
  \multicolumn{5}{c|}{std} & 
  \multicolumn{5}{c}{rep} \\
group & cub & h & best & ub & gap & time & \#opt & best & ub & gap & time & \#opt \\ 
   \hline
cbip\_15\_0.2 & 12.6 &  9.8 & \textbf{10.8} & 12.6 & 16.9 & 3600.0 & 0 & \textbf{10.8} & 10.8 &  0.0 &   34.8 & 5 \\ 
  cbip\_15\_0.4 & 10.2 &  8.8 & \textbf{9.4} & 10.2 &  8.7 & 2902.9 & 1 & \textbf{9.4} & 10.2 &  8.7 & 2932.5 & 1 \\ 
  cbip\_15\_0.6 &  9.2 &  \textbf{9.2} & \textbf{9.2} &  9.2 &  0.0 &    0.9 & 5 &  \textbf{9.2} &  9.2 &  0.0 &    0.7 & 5 \\ 
  cbip\_15\_0.8 &  8.6 &  \textbf{8.6} & \textbf{8.6} &  8.6 &  0.0 &    0.8 & 5 &  \textbf{8.6} &  8.6 &  0.0 &    0.5 & 5 \\ 
  cbip\_20\_0.2 & 17.4 & 12.6 & 13.8 & 17.4 & 26.2 & 3600.0 & 0 & \textbf{14.0} & 14.0 &  0.0 &  151.8 & 5 \\ 
  cbip\_20\_0.4 & 14.2 & 11.6 & \textbf{12.4} & 14.2 & 14.8 & 3600.0 & 0 & \textbf{12.4} & 14.0 & 13.1 & 3600.0 & 0 \\ 
  cbip\_20\_0.6 & 11.2 & \textbf{11.2} & \textbf{11.2} & 11.2 &  0.0 &   24.1 & 5 & \textbf{11.2} & 11.4 &  2.0 &  722.6 & 4 \\ 
  cbip\_20\_0.8 & 11.8 & \textbf{11.8} & \textbf{11.8} & 11.8 &  0.0 &   49.8 & 5 & \textbf{11.8} & 12.0 &  2.0 &  722.4 & 4 \\ 
  cbip\_25\_0.2 & 21.4 & 15.6 & 16.4 & 21.4 & 30.8 & 3600.1 & 0 & \textbf{17.2} & 18.0 &  4.7 & 2484.4 & 3 \\ 
  cbip\_25\_0.4 & 17.8 & 14.6 & 14.8 & 17.8 & 20.6 & 3600.1 & 0 & \textbf{15.0} & 17.8 & 19.1 & 3600.1 & 0 \\ 
  cbip\_25\_0.6 & 15.0 & \textbf{15.0} & \textbf{15.0} & 15.0 &  0.0 &    7.4 & 5 & \textbf{15.0} & 15.0 &  0.0 &   11.3 & 5 \\ 
  cbip\_25\_0.8 & 14.0 & \textbf{14.0} & \textbf{14.0} & 14.0 &  0.0 &    6.2 & 5 & \textbf{14.0} & 14.0 &  0.0 &   11.4 & 5 \\ 
  cbip\_30\_0.2 & 26.2 & 19.4 & 20.2 & 26.2 & 30.5 & 3600.4 & 0 & \textbf{20.6} & 22.2 &  8.4 & 3305.4 & 2 \\ 
  cbip\_30\_0.4 & 22.0 & 19.6 & \textbf{20.0} & 22.0 & 11.1 & 2903.9 & 1 & 19.8 & 22.0 & 12.7 & 2887.5 & 1 \\ 
  cbip\_30\_0.6 & 16.6 & \textbf{16.6} & \textbf{16.6} & 16.6 &  0.0 &   43.6 & 5 & \textbf{16.6} & 16.6 &  0.0 &   44.4 & 5 \\ 
  cbip\_30\_0.8 & 16.2 & \textbf{16.2} & \textbf{16.2} & 16.6 &  2.7 & 1467.1 & 3 & \textbf{16.2} & 16.6 &  2.7 & 1460.1 & 3 \\ 
   \hline
    Mean & 15.2 & 13.4 & 13.7 & 15.3 & 10.1 & 1812.9 &  & \textbf{13.8} & 14.5 & 4.5 & 1373.1 & \\
   Total &  &  &  & & & & 40 & & &  &  & \textbf{53}  \\
   \hline
\end{tabular}
\label{tab:grundy_cbip_graph_leq_30}
\end{table}
\clearpage
\section{BRKGA results for the connected Grundy coloring problem}
\label{sec:brkga_grundy_con}

The following Tables \ref{tab:cbrkga_rand_rls}-\ref{tab:cbrkga_dimacs_rls} presents the results of BRKGA+R+LS compared with the BRKGA-B approach presented in Section~\ref{subsec:decoder}. The first column represents an instance group, so all the values in each row correspond to the average over five instances, except for \textit{DIMACS} instances where each represents a single instance. Columns 2-4, 5-7 provide the average mean, maximum, and time to best (ttb) considering the 50 independent runs for each of the instance for the BRKGA and BRKGA+R+LS, respectively. Columns $diff_m = 100*\frac{mean_{\textrm{BRKGA+R+LS}}-mean_{\textrm{BRKGA-B}}}{mean_{\textrm{BRKGA-B}}}$ and $diff_x = 100*\frac{max_{\textrm{BRKGA+R+LS}}-max_{\textrm{BRKGA-B}}}{max_{\textrm{BRKGA-B}}}$, and represent the percentage improvement in relation to literature results for mean and best solution. Positive values for the $diff$ columns indicate better results from BRGKA+R+LS. The $\geq_{mean}$ and $\geq_{max}$ columns indicate how many times the new approach obtained better or equal solutions within each group about the average and maximum colors obtained by BRKGA-B, respectively. The last column ($>_{max}$) reports the quantity in which it was strictly better. The tables' last two rows indicate the average values across all rows and the total number of instances that the new approach was better than or equal to the literature results. Note that Table \ref{tab:brkga_dimacs_rls} corresponding to \textit{DIMACS} instances does not contain the $\geq_{mean}$, $\geq_{max}$ columns or the \textit{Total} row because each row corresponds to a single instance.

\begin{table}[ht]
\centering
\small
\caption{BRKGA-B and BRKGA+R+LS results for the random graphs.}
\begin{tabular}{l|rrr|rrrrrrrr}
  \hline
\multicolumn{1}{c|}{} & 
\multicolumn{3}{c|}{BRKGA-B} & 
\multicolumn{6}{c}{BRKGA+R+LS} \\
group & mean & max & ttb & mean & max & ttb & $diff_{m}$ & $\geq_{mean}$ & $diff_{x}$ & $\geq_{max}$ & $>_{max}$\\
\hline
rand\_50\_0.2 & 11.13 & 11.60 & 31.3 & 11.24 & 11.60 &  24.2 & 0.95 & 5 & 0.28 & 4 & 1\\ 
rand\_50\_0.4 & 16.20 & 17.20 & 43.0 & 16.57 & 17.20 &  80.1 & 2.25 & 5 & 0.07 & 4 & 1\\ 
rand\_50\_0.6 & 22.65 & 23.40 & 57.4 & 23.04 & 23.60 &  78.1 & 1.70 & 5 & 0.87 & 5 & 1\\ 
rand\_50\_0.8 & 30.72 & 31.40 & 53.4 & 30.90 & 31.40 &  62.7 & 0.59 & 5 &  0.00 & 5 & 0\\ 
rand\_60\_0.2 & 12.14 & 12.60 & 40.4 & 12.32 & 12.40 &  44.1 & 1.48 & 4 &  -1.54 & 4 & 0\\ 
rand\_60\_0.4 & 18.94 & 19.80 & 55.9 & 19.22 & 19.80 &  84.0 & 1.51 & 5 & 0.05 & 4 & 1\\ 
rand\_60\_0.6 & 25.70 & 26.80 & 64.7 & 26.08 & 27.00 &  91.2 & 1.49 & 5 & 0.77 & 5 & 1\\ 
rand\_60\_0.8 & 35.62 & 36.40 & 59.1 & 35.81 & 36.60 &  80.3 & 0.55 & 5 & 0.54 & 5 & 1\\ 
rand\_70\_0.2 & 13.24 & 13.60 & 25.8 & 13.43 & 14.00 &  56.9 & 1.44 & 5 & 3.08 & 5 & 2 \\ 
rand\_70\_0.4 & 20.92 & 21.80 & 51.9 & 21.14 & 22.00 &  81.1 & 1.05 & 5 & 0.95 & 5 & 1\\ 
rand\_70\_0.6 & 28.24 & 29.20 & 65.6 & 28.54 & 29.80 & 105.1 & 1.05 & 5 & 2.07 & 5 & 3\\ 
rand\_70\_0.8 & 39.46 & 40.60 & 54.5 & 39.59 & 40.80 & 106.2 & 0.34 & 3 & 0.50 & 5 & 1 \\ 
rand\_80\_0.2 & 14.42 & 15.00 & 42.7 & 14.66 & 15.00 &  85.9 & 1.64 & 5 & 0.08 & 4 & 1\\ 
rand\_80\_0.4 & 22.58 & 23.60 & 52.9 & 22.64 & 23.60 &  87.5 & 0.27 & 4 & 0.04 & 4 & 1\\ 
rand\_80\_0.6 & 31.65 & 32.80 & 65.6 & 31.73 & 33.00 & 103.2 & 0.25 & 3 & 0.64 & 4 & 2\\ 
rand\_80\_0.8 & 43.36 & 44.80 & 64.6 & 43.33 & 45.00 & 112.8 &  -0.06 & 2 & 0.45 & 5 & 1\\ 
\hline
Mean & 24.18 & 25.03 & 51.80 & 24.39 & 25.17 & 80.21 & 1.03 & & 0.55 \\
Total & & & & & & & & 71 & & 73 & 18\\
   \hline
\end{tabular}
\label{tab:cbrkga_rand_rls}
\end{table}

\begin{table}[ht]
\centering
\small
\caption{BRKGA-B and BRKGA+R+LS results for the geometric graphs.}
\begin{tabular}{l|rrr|rrrrrrrr}
  \hline
\multicolumn{1}{c|}{} & 
\multicolumn{3}{c|}{BRKGA-B} & 
\multicolumn{6}{c}{BRKGA+R+LS} \\
group & mean & max & ttb & mean & max & ttb & $diff_{m}$ & $\geq_{mean}$ & $diff_{x}$ & $\geq_{max}$ & $>_{max}$\\
\hline
geo\_50\_0.2 &  8.40 &  8.40 &  0.0 &  8.40 &  8.40 &  0.0 &  0.00 & 5 &  0.00 & 5 & 0 \\ 
geo\_50\_0.4 & 22.01 & 22.20 & 32.1 & 22.06 & 22.20 & 30.3 & 0.21 & 5 &  0.00 & 5 & 0\\ 
geo\_50\_0.6 & 30.95 & 31.20 & 27.1 & 31.01 & 31.20 & 29.9 & 0.17 & 4 &  0.00 & 5 & 0\\ 
geo\_50\_0.8 & 37.79 & 37.80 &  2.2 & 37.80 & 37.80 &  3.8 & 0.02 & 5 &  0.00 & 5 & 0\\ 
geo\_60\_0.2 &  9.60 &  9.80 & 11.1 &  9.80 &  9.80 & 12.7 & 2.14 & 5 &  0.00 & 5 & 0\\ 
geo\_60\_0.4 & 26.16 & 26.80 & 51.0 & 26.19 & 27.00 & 43.5 & 0.13 & 4 & 0.71 & 5 & 1\\ 
geo\_60\_0.6 & 37.45 & 38.00 & 28.1 & 37.46 & 38.00 & 42.3 & 0.04 & 4 &  0.00 & 5 & 0\\ 
geo\_60\_0.8 & 47.34 & 47.40 &  9.8 & 47.38 & 47.40 & 20.5 & 0.09 & 5 &  0.00 & 5 & 0\\ 
geo\_70\_0.2 & 11.89 & 12.00 &  5.5 & 12.00 & 12.00 & 14.3 & 0.94 & 5 &  0.00 & 5 & 0\\ 
geo\_70\_0.4 & 26.85 & 27.80 & 48.2 & 26.99 & 28.20 & 56.7 & 0.53 & 4 & 1.54 & 5 & 2\\ 
geo\_70\_0.6 & 41.40 & 42.20 & 56.8 & 41.35 & 42.00 & 81.1 & -0.12 & 2 &  -0.45 & 4 & 0\\ 
geo\_70\_0.8 & 53.66 & 54.20 & 29.5 & 53.68 & 54.20 & 40.6 & 0.02 & 3 &  0.00 & 5 & 0\\ 
geo\_80\_0.2 & 12.00 & 12.00 &  0.1 & 12.00 & 12.00 &  0.3 &  0.00 & 5 &  0.00 & 5 & 0\\ 
geo\_80\_0.4 & 31.35 & 32.20 & 52.8 & 31.11 & 32.20 & 72.8 & -0.78 & 1 & 0.07 & 4 & 1\\ 
geo\_80\_0.6 & 47.60 & 48.40 & 45.2 & 47.37 & 48.20 & 65.9 & -0.49 & 0 &  -0.43 & 4 & 0\\ 
geo\_80\_0.8 & 62.32 & 62.40 &  8.1 & 62.30 & 62.40 & 26.0 & -0.03 & 3 &  0.00 & 5 & 0\\ 
\hline
Mean & 31.67 & 32.05 & 25.47 & 31.68 & 32.06 & 33.79 & 0.17 & & 0.09 \\
Total & & & & & & & & 60 & & 77 & 4\\
   \hline
\end{tabular}
\label{tab:cbrkga_geo_rls}
\end{table}

\begin{table}[ht]
\centering
\small
\caption{BRKGA-B and BRKGA+R+LS results for the complement of bipartite graphs.}
\begin{tabular}{l|rrr|rrrrrrrr}
  \hline
\multicolumn{1}{c|}{} & 
\multicolumn{3}{c|}{BRKGA-B} & 
\multicolumn{6}{c}{BRKGA+R+LS} \\
group & mean & max & ttb & mean & max & ttb & $diff_{m}$ & $\geq_{mean}$ & $diff_{x}$ & $\geq_{max}$ & $>_{max}$ \\
\hline
cbip\_50\_0.2 & 34.54 & 34.60 & 12.5 & 34.60 & 34.60 & 10.3 & 0.17 & 5 & 0.00 & 5 & 0\\ 
cbip\_50\_0.4 & 31.03 & 31.20 &  3.5 & 31.16 & 31.20 & 18.5 & 0.43 & 5 & 0.00 & 5 & 0\\ 
cbip\_50\_0.6 & 29.18 & 29.20 &  1.4 & 29.20 & 29.20 &  2.9 & 0.07 & 5 & 0.00 & 5 & 0\\ 
cbip\_50\_0.8 & 30.40 & 30.40 &  2.4 & 30.40 & 30.40 &  0.8 &  0.00 & 5 & 0.00 & 5 & 0\\ 
cbip\_60\_0.2 & 41.13 & 41.40 &  4.9 & 41.25 & 41.40 & 22.1 & 0.29 & 5 & 0.00 & 5 & 0\\ 
cbip\_60\_0.4 & 37.82 & 38.00 &  7.9 & 37.84 & 38.00 & 17.2 & 0.05 & 4 & 0.00 & 5 & 0\\ 
cbip\_60\_0.6 & 35.26 & 35.60 &  3.3 & 35.41 & 35.60 & 30.7 & 0.45 & 5 & 0.00 & 5 & 0\\ 
cbip\_60\_0.8 & 34.51 & 34.60 & 18.5 & 34.58 & 34.60 & 17.6 & 0.21 & 5 & 0.00 & 5 & 0\\ 
cbip\_70\_0.2 & 46.81 & 47.20 &  2.1 & 46.94 & 47.20 & 22.9 & 0.28 & 5 & 0.00 & 5 & 0\\ 
cbip\_70\_0.4 & 44.46 & 44.80 & 14.4 & 44.42 & 44.80 & 25.4 & -0.09 & 2 & 0.00 & 5 & 0\\ 
cbip\_70\_0.6 & 41.72 & 41.80 & 17.3 & 41.72 & 41.80 & 26.1 &  0.00 & 4 & 0.00 & 5 & 0\\ 
cbip\_70\_0.8 & 41.16 & 41.20 &  1.5 & 41.17 & 41.20 &  9.5 & 0.02 & 5 & 0.00 & 5 & 0\\ 
cbip\_80\_0.2 & 51.18 & 51.60 &  4.7 & 51.17 & 51.60 & 29.8 & -0.02 & 2 & 0.00 & 5 & 0\\ 
cbip\_80\_0.4 & 47.17 & 47.40 &  5.1 & 47.11 & 47.40 & 23.5 & -0.13 & 0 & 0.00 & 5 & 0\\ 
cbip\_80\_0.6 & 46.21 & 46.40 &  0.9 & 46.21 & 46.40 &  7.3 &  0.00 & 2 & 0.00 & 5 & 0\\ 
cbip\_80\_0.8 & 44.73 & 44.80 & 18.9 & 44.72 & 44.80 & 18.3 & -0.03 & 4 & 0.00 & 5 & 0\\ 
\hline
Mean & 39.83 & 40.01 & 7.45 & 39.86 & 40.01 & 17.68 & 0.10 & & 0.00 \\
Total & & & & & & & & 63 & & 80 & 0\\
\hline
\end{tabular}
\label{tab:cbrkga_cbip_rls}
\end{table}

\begin{table}[ht]
\small
\centering
\caption{BRKGA-B and BRKGA+R+LS results for the DIMACS graphs.}
\begin{tabular}{l|rrr|rrrrr}
  \hline
\multicolumn{1}{c|}{} & 
\multicolumn{3}{c|}{BRKGA-B} & 
\multicolumn{5}{c}{BRKGA+R+LS} \\
instance & mean & max & ttb & mean & max & ttb & $diff_{m}$ & $diff_{x}$ \\
 \hline
brock200\_2 &  44.14 &  46.00 & 158.8 &  45.82 &  47.00 & 165.5 & 3.81 & 2.17 \\ 
c-fat200-1 &  18.00 &  18.00 &   0.2 &  18.00 &  18.00 &   0.4 &  0.00 &  0.00 \\ 
c-fat200-2 &  34.00 &  34.00 &   1.3 &  34.00 &  34.00 &   4.3 &  0.00 &  0.00 \\ 
c-fat200-5 &  86.62 &  87.00 &   3.0 &  86.46 &  87.00 &  82.2 &  -0.18 &  0.00 \\ 
c-fat500-1 &  20.00 &  20.00 &   0.6 &  20.00 &  20.00 &  16.1 &  0.00 &  0.00 \\ 
c-fat500-2 &  38.00 &  38.00 &  28.3 &  38.00 &  38.00 &  36.7 &  0.00 &  0.00 \\ 
C125.9 &  76.78 &  78.00 &  48.4 &  75.36 &  77.00 & 148.5 & -1.85 & -1.28 \\ 
DSJC125.1 &  12.90 &  13.00 &  16.5 &  12.92 &  13.00 & 111.6 & 0.16 &  0.00 \\ 
DSJC125.5 &  36.92 &  38.00 & 106.2 &  35.92 &  37.00 & 125.1 & -2.71 & -2.63 \\ 
DSJC125.9 &  76.64 &  78.00 &  41.2 &  75.38 &  77.00 & 166.7 & -1.64 &  -1.28 \\ 
DSJC250.1 &  17.98 &  18.00 &  82.4 &  17.92 &  18.00 &  91.9 & -0.33 &  0.00 \\ 
DSJR500.1 &  20.00 &  20.00 &  11.6 &  19.96 &  20.00 &  14.3 & -0.20 &  0.00 \\ 
fpsol2.i.2 &  39.98 &  40.00 &  26.7 &  39.96 &  40.00 &  56.1 & -0.05 &  0.00 \\ 
fpsol2.i.3 &  39.86 &  40.00 &  41.4 &  39.96 &  40.00 &  63.9 & 0.25 &  0.00 \\ 
hamming6-2 &  40.00 &  40.00 &   0.2 &  40.00 &  40.00 &   1.0 &  0.00 &  0.00 \\ 
hamming6-4 &  13.80 &  15.00 &  71.0 &  14.08 &  15.00 &  57.5 & 2.03 &  0.00 \\ 
hamming8-2 & 159.12 & 160.00 & 217.6 & 147.66 & 150.00 & 313.6 & -7.20 & -6.25 \\ 
hamming8-4 &  38.20 &  39.00 &  71.5 &  38.96 &  41.00 &  43.4 & 1.99 & 5.13 \\ 
johnson8-2-4 &  12.00 &  12.00 &   4.5 &  12.00 &  12.00 &   1.5 &  0.00 &  0.00 \\ 
johnson8-4-4 &  29.04 &  31.00 & 100.7 &  29.62 &  31.00 & 125.2 & 2.00 &  0.00 \\ 
keller4 &  47.44 &  52.00 & 174.1 &  44.30 &  47.00 & 179.0 & -6.62 & -9.62 \\ 
le450\_15a &  30.72 &  31.00 & 167.0 &  31.24 &  32.00 & 137.1 & 1.69 & 3.23 \\ 
le450\_15b &  31.06 &  32.00 & 166.2 &  31.66 &  33.00 & 164.3 & 1.93 & 3.12 \\ 
le450\_25a &  43.40 &  44.00 & 176.4 &  43.06 &  44.00 & 131.1 & -0.78 &  0.00 \\ 
le450\_25b &  41.82 &  42.00 & 102.2 &  42.16 &  44.00 & 110.2 & 0.81 & 4.76 \\ 
le450\_5a &  17.90 &  19.00 & 121.3 &  17.94 &  18.00 &  67.8 & 0.22 & -5.26 \\ 
le450\_5b &  17.96 &  18.00 & 172.2 &  17.88 &  18.00 &  72.4 & -0.45 &  0.00 \\ 
le450\_5c &  21.44 &  22.00 & 134.6 &  22.14 &  23.00 & 111.5 & 3.26 & 4.55 \\ 
le450\_5d &  21.28 &  22.00 & 151.3 &  22.00 &  23.00 & 130.0 & 3.38 & 4.55 \\ 
MANN\_a9 &  21.00 &  21.00 &   0.0 &  21.00 &  21.00 &   0.0 &  0.00 &  0.00 \\ 
mulsol.i.1 &  52.00 &  52.00 &   3.2 &  52.00 &  52.00 &   7.2 &  0.00 &  0.00 \\ 
mulsol.i.2 &  33.42 &  34.00 &  44.8 &  33.68 &  35.00 &  77.4 & 0.78 & 2.94 \\ 
mulsol.i.3 &  33.56 &  34.00 &  71.5 &  33.76 &  34.00 &  87.1 & 0.60 &  0.00 \\ 
mulsol.i.4 &  33.46 &  34.00 &  57.6 &  33.66 &  34.00 &  69.9 & 0.60 &  0.00 \\ 
mulsol.i.5 &  34.00 &  34.00 &   0.4 &  34.00 &  34.00 &   2.2 &  0.00 &  0.00 \\ 
R125.1 &   7.00 &   7.00 &   0.0 &   7.00 &   7.00 &   0.0 &  0.00 &  0.00 \\ 
R125.1c &  62.00 &  62.00 &   0.8 &  62.00 &  62.00 &  17.8 &  0.00 &  0.00 \\ 
R125.5 &  64.40 &  66.00 &  47.3 &  63.44 &  65.00 &  98.9 & -1.49 & -1.52 \\ 
R250.1 &  11.10 &  12.00 &   2.7 &  11.52 &  12.00 &  45.4 & -3.78 &  0.00 \\ 
zeroin.i.1 &  52.86 &  53.00 &  84.3 &  52.96 &  54.00 &  62.6 & -0.19 & -1.89 \\ 
zeroin.i.2 &  35.04 &  37.00 &  29.8 &  34.92 &  37.00 &  69.9 &  -0.34 &  0.00 \\ 
zeroin.i.3 &  35.06 &  37.00 &  34.9 &  34.94 &  37.00 &  66.1 & -0.34 &  0.00 \\ 
\hline
   Mean & 38.14 & 38.80 & 66.0 & 37.89 & 38.59 & 79.3 & 0.07 & 0.10 \\
   \hline
\end{tabular}
\label{tab:cbrkga_dimacs_rls}
\end{table}
\clearpage
\section{BRKGA results for the Grundy coloring problem}
\label{sec:brkga_results_grundy}

The following Tables \ref{tab:brkga_rand_rls}-\ref{tab:brkga_dimacs_rls} presents the results of BRKGA+R+LS compared with the BRKGA approach proposed in \cite{Silvaetal2024}, which will be identified here as BRKGA-G. The first column represents an instance group, so all the values in each row correspond to the average over five instances, except for \textit{DIMACS} instances where each represents a single instance. Columns 2-4, 5-7 provide the average mean, maximum, and time to best (ttb) considering the 50 independent runs for each of the instance for the \textbf{BRKGA-G} (literature results) and \textbf{BRKGA+R+LS}, respectively. Columns $diff_m = 100*\frac{mean_{\textrm{BRKGA+R+LS}}-mean_{\textrm{BRKGA-G}}}{mean_{\textrm{BRKGA-G}}}$ and $diff_x = 100*\frac{max_{\textrm{BRKGA+R+LS}} - max_{\textrm{BRKGA-G}}}{max_{\textrm{BRKGA-G}}}$, and represent the percentage improvement in relation to literature results for mean and best solution. Positive values for the $diff$ columns indicate better results from $BRGKA+R+LS$. The $\geq_{mean}$ and $\geq_{max}$ columns indicate how many times the new approach obtained better or equal solutions within each group about the average and maximum colors obtained by pure BRKGA-G, respectively. The last column ($>_{max}$) reports the quantity in which it was strictly better. The tables' last two rows indicate the average values across all rows and the total number of instances that the new approach was better than or equal to the literature results. Note that Table \ref{tab:brkga_dimacs_rls} corresponding to \textit{DIMACS} instances does not contain the $\geq_{mean}$, $\geq_{max}$ columns or the \textit{Total} row because each row corresponds to a single instance.

\begin{table}[ht]
\centering
\small
\caption{BRKGA-G and BRKGA+R+LS results for the random graphs.}
\begin{tabular}{l|rrr|rrrrrrrr}
  \hline
\multicolumn{1}{c|}{} & 
\multicolumn{3}{c|}{BRKGA-G} & 
\multicolumn{6}{c}{BRKGA+R+LS} \\
group & mean & max & ttb & mean & max & ttb & $diff_{m}$ & $\geq_{mean}$ & $diff_{x}$ & $\geq_{max}$ & $>_{max}$\\
 \hline
rand\_50\_0.2 & 11.18 & 11.60 & 35.8 & 11.27 & 11.60 &  29.8 & 0.80 & 5 &  0.00 & 5 & 0\\ 
rand\_50\_0.4 & 16.26 & 17.00 & 51.7 & 16.66 & 17.20 &  74.1 & 2.41 & 5 & 1.18 & 5 & 1\\ 
rand\_50\_0.6 & 22.62 & 23.40 & 81.1 & 23.00 & 23.60 &  77.6 & 1.67 & 5 & 0.87 & 5 & 1\\ 
rand\_50\_0.8 & 30.79 & 31.40 & 77.9 & 30.88 & 31.40 &  68.2 & 0.28 & 4 &  0.00 & 5 & 0 \\ 
rand\_60\_0.2 & 12.15 & 12.60 & 46.3 & 12.38 & 12.60 &  48.9 & 1.92 & 5 &  0.00 & 5 & 0\\ 
rand\_60\_0.4 & 18.84 & 19.80 & 65.2 & 19.18 & 20.00 &  85.3 & 1.82 & 5 & 1.05 & 5 & 1\\ 
rand\_60\_0.6 & 25.66 & 26.40 & 95.3 & 25.99 & 26.80 &  91.6 & 1.28 & 5 & 1.54 & 5 & 2 \\ 
rand\_60\_0.8 & 35.86 & 36.80 & 78.9 & 36.07 & 37.00 &  93.5 & 0.59 & 5 & 0.56 & 4 & 2\\ 
rand\_70\_0.2 & 13.19 & 14.00 & 31.5 & 13.48 & 14.00 &  56.2 & 2.22 & 4 &  0.00 & 5 & 0\\ 
rand\_70\_0.4 & 20.79 & 21.80 & 79.1 & 21.10 & 21.80 &  77.9 & 1.50 & 5 &  0.00 & 5 & 0\\ 
rand\_70\_0.6 & 28.11 & 29.20 & 83.9 & 28.46 & 29.40 & 102.5 & 1.25 & 5 & 0.71 & 4 & 2\\ 
rand\_70\_0.8 & 39.54 & 40.60 & 93.9 & 39.67 & 40.60 & 108.8 & 0.31 & 4 &  0.00 & 5 & 0\\ 
rand\_80\_0.2 & 14.31 & 15.00 & 49.2 & 14.68 & 15.00 &  76.9 & 2.59 & 5 & 0.00 & 5 & 0\\ 
rand\_80\_0.4 & 22.27 & 23.40 & 66.8 & 22.55 & 23.20 &  79.5 & 1.25 & 5 & -0.83 & 4 & 0\\ 
rand\_80\_0.6 & 31.36 & 32.60 & 94.2 & 31.60 & 32.60 & 108.6 & 0.75 & 5 &  0.00 & 5 & 0\\ 
rand\_80\_0.8 & 43.14 & 44.60 & 82.1 & 43.40 & 44.60 & 123.8 & 0.59 & 5 & 0.01 & 4 & 1\\ 
\hline
Mean & 24.12 & 25.01 & 69.5 & 24.39 & 25.08 & 81.45 & 1.32 & & 0.31 \\
Total & & & & & & & & 77 & & 76 & 10\\
\hline
\end{tabular}
\label{tab:brkga_rand_rls}
\end{table}

\begin{table}[ht]
\centering
\small
\caption{BRKGA-G and BRKGA+R+LS results for the geometric graphs.}
\begin{tabular}{l|rrr|rrrrrrrr}
  \hline
\multicolumn{1}{c|}{} & 
\multicolumn{3}{c|}{BRKGA-G} & 
\multicolumn{6}{c}{BRKGA+R+LS} \\
group & mean & max & ttb & mean & max & ttb & $diff_{m}$ & $\geq_{mean}$ & $diff_{x}$ & $\geq_{max}$ & $>_{max}$\\
 \hline
geo\_50\_0.2 &  8.92 &  9.00 & 20.0 &  8.98 &  9.00 & 16.8 & 0.59 & 5 &  0.00 & 5 & 0\\ 
geo\_50\_0.4 & 22.12 & 22.20 & 26.0 & 22.12 & 22.20 & 17.7 & 0.02 & 4 &  0.00 & 5 & 0\\ 
geo\_50\_0.6 & 30.99 & 31.40 & 31.3 & 31.08 & 31.60 & 29.9 & 0.33 & 5 & 0.65 & 5 & 1\\ 
geo\_50\_0.8 & 37.82 & 38.00 &  3.9 & 37.82 & 38.00 &  4.8 &  0.00 & 5 &  0.00 & 5 & 0\\ 
geo\_60\_0.2 & 10.00 & 10.00 &  5.2 & 10.00 & 10.00 &  0.9 &  0.00 & 5 &  0.00 & 5 & 0\\ 
geo\_60\_0.4 & 26.25 & 26.60 & 41.7 & 26.34 & 26.80 & 41.5 & 0.35 & 5 & 0.71 & 5 & 1 \\ 
geo\_60\_0.6 & 37.52 & 38.00 & 39.0 & 37.64 & 38.00 & 45.0 & 0.30 & 5 &  0.00 & 5 & 0\\ 
geo\_60\_0.8 & 47.47 & 47.60 & 19.8 & 47.52 & 47.60 & 24.3 & 0.12 & 4 &  0.00 & 5 & 0\\ 
geo\_70\_0.2 & 12.24 & 12.40 &  5.3 & 12.34 & 12.40 & 21.7 & 0.86 & 5 &  0.00 & 5 & 0\\ 
geo\_70\_0.4 & 27.43 & 28.40 & 55.9 & 27.66 & 28.40 & 60.3 & 0.81 & 4 &  -0.03 & 4 & 1 \\ 
geo\_70\_0.6 & 41.70 & 42.40 & 42.1 & 41.78 & 42.60 & 47.6 & 0.18 & 4 & 0.45 & 5 & 1\\ 
geo\_70\_0.8 & 53.98 & 54.20 & 36.3 & 54.00 & 54.20 & 45.1 & 0.03 & 3 &  0.00 & 5 & 0\\ 
geo\_80\_0.2 & 12.00 & 12.00 &  0.1 & 12.00 & 12.00 &  0.1 &  0.00 & 5 &  0.00 & 5 & 0\\ 
geo\_80\_0.4 & 31.49 & 32.20 & 61.0 & 31.54 & 32.40 & 61.1 & 0.13 & 4 & 0.62 & 5 & 1\\ 
geo\_80\_0.6 & 47.72 & 48.40 & 52.1 & 47.74 & 48.60 & 69.9 & 0.05 & 4 & 0.43 & 5 & 1\\ 
geo\_80\_0.8 & 62.72 & 63.00 & 24.7 & 62.74 & 63.00 & 46.4 & 0.03 & 4 &  0.00 & 5 & 0 \\ 
\hline
Mean & 31.92 & 32.26 & 29.05 & 31.95 & 32.30 & 33.31 & 0.23 & & 0.17 \\
Total & & & & & & & & 71 & & 79 & 6\\
\hline
\end{tabular}
\label{tab:brkga_geo_rls}
\end{table}

\begin{table}[ht]
\centering
\small
\caption{BRKGA-G and BRKGA+R+LS results for the bipartite graphs.}
\begin{tabular}{l|rrr|rrrrrrrr}
  \hline
\multicolumn{1}{c|}{} & 
\multicolumn{3}{c|}{BRKGA-G} & 
\multicolumn{6}{c}{BRKGA+R+LS} \\
group & mean & max & ttb & mean & max & ttb & $diff_{m}$ & $\geq_{mean}$ & $diff_{x}$ & $\geq_{max}$ & $>_{max}$\\
\hline
bip\_50\_0.2 &  7.56 &  8.00 &  31.2 &  7.74 &  8.00 &  44.2 & 2.46 & 5 &  0.00 & 5 & 0\\ 
bip\_50\_0.4 & 10.05 & 11.00 &  19.6 & 10.15 & 11.00 &  23.3 & 0.99 & 5 &  0.00 & 5 & 0\\ 
bip\_50\_0.6 & 12.76 & 13.40 &  65.1 & 12.92 & 13.60 &  46.1 & 1.22 & 5 & 1.54 & 5 & 1\\ 
bip\_50\_0.8 & 14.93 & 16.00 &  59.4 & 15.44 & 16.20 &  80.0 & 3.45 & 5 & 1.43 & 5 & 1\\ 
bip\_60\_0.2 &  8.15 &  8.60 &  19.4 &  8.41 &  8.80 &  47.1 & 3.15 & 5 & 2.50 & 5 & 1\\ 
bip\_60\_0.4 & 10.90 & 11.60 &  53.3 & 11.08 & 11.80 &  37.4 & 1.63 & 5 & 1.82 & 5 & 1\\ 
bip\_60\_0.6 & 13.86 & 15.00 &  82.5 & 14.12 & 15.00 &  74.7 & 1.88 & 5 &  0.00 & 5 & 0\\ 
bip\_60\_0.8 & 17.17 & 18.80 &  92.7 & 17.65 & 18.80 & 102.3 & 2.84 & 5 & 0.11 & 4 & 1\\ 
bip\_70\_0.2 &  8.65 &  9.20 &  27.7 &  8.87 &  9.20 &  45.8 & 2.58 & 5 &  0.00 & 5 & 0\\ 
bip\_70\_0.4 & 11.67 & 12.40 &  62.4 & 11.90 & 12.60 &  52.2 & 1.96 & 5 & 1.67 & 5 & 1\\ 
bip\_70\_0.6 & 14.63 & 15.80 &  81.9 & 14.99 & 16.00 &  83.0 & 2.46 & 5 & 1.33 & 5 & 1\\ 
bip\_70\_0.8 & 18.36 & 20.20 & 110.7 & 18.95 & 20.40 & 111.6 & 3.25 & 5 & 1.05 & 5 & 1\\ 
bip\_80\_0.2 &  9.36 &  9.80 &  49.1 &  9.64 &  9.80 &  62.0 & 3.05 & 5 &  0.00 & 5 & 0\\ 
bip\_80\_0.4 & 12.84 & 13.80 &  76.3 & 13.05 & 14.00 &  67.3 & 1.60 & 5 & 1.54 & 5 & 1\\ 
bip\_80\_0.6 & 16.00 & 17.00 & 108.6 & 16.34 & 17.20 &  91.5 & 2.18 & 5 & 1.18 & 5 & 1\\ 
bip\_80\_0.8 & 19.38 & 21.20 & 115.7 & 20.06 & 21.60 & 125.7 & 3.52 & 5 & 1.90 & 5 & 2\\ 
\hline
Mean & 12.89 & 13.86 & 65.97 & 13.20 & 14.00 & 68.32 & 2.38 & & 1.04 \\
Total & & & & & & & & 80 & & 79 & 12\\
   \hline
\end{tabular}
\label{tab:brkga_bip_rls}
\end{table}

\begin{table}[ht]
\centering
\small
\caption{BRKGA-G and BRKGA+R+LS results for the complement of bipartite graphs.}
\begin{tabular}{l|rrr|rrrrrrrr}
  \hline
\multicolumn{1}{c|}{} & 
\multicolumn{3}{c|}{BRKGA-G} & 
\multicolumn{6}{c}{BRKGA+R+LS} \\
group & mean & max & ttb & mean & max & ttb & $diff_{m}$ & $\geq_{mean}$ & $diff_{x}$ & $\geq_{max}$ & $>_{max}$\\
\hline
cbip\_50\_0.2 & 34.52 & 34.60 & 16.6 & 34.60 & 34.60 & 13.8 & 0.25 & 5 & 0.00 & 5 & 0\\ 
cbip\_50\_0.4 & 31.04 & 31.20 &  4.8 & 31.18 & 31.20 & 10.3 & 0.42 & 5 & 0.00 & 5 & 0\\ 
cbip\_50\_0.6 & 29.16 & 29.20 &  3.9 & 29.20 & 29.20 &  2.7 & 0.13 & 5 & 0.00 & 5 & 0\\ 
cbip\_50\_0.8 & 30.40 & 30.40 &  1.4 & 30.40 & 30.40 &  0.9 &  0.00 & 5 & 0.00 & 5 & 0\\ 
cbip\_60\_0.2 & 41.27 & 41.40 &  9.2 & 41.37 & 41.40 & 20.5 & 0.25 & 4 & 0.00 & 5 & 0\\ 
cbip\_60\_0.4 & 37.84 & 38.20 & 11.4 & 37.94 & 38.20 & 21.2 & 0.27 & 4 & 0.00 & 5 & 0\\ 
cbip\_60\_0.6 & 35.34 & 35.80 &  6.3 & 35.52 & 35.80 & 32.6 & 0.54 & 5 & 0.00 & 5 & 0 \\ 
cbip\_60\_0.8 & 34.53 & 34.60 & 19.3 & 34.59 & 34.60 & 16.8 & 0.18 & 5 & 0.00 & 5 & 0 \\ 
cbip\_70\_0.2 & 46.77 & 47.20 &  9.6 & 46.89 & 47.20 & 26.3 & 0.26 & 5 & 0.00 & 5 & 0\\ 
cbip\_70\_0.4 & 44.49 & 44.80 &  8.6 & 44.52 & 44.80 & 19.0 & 0.07 & 4 & 0.00 & 5 & 0 \\ 
cbip\_70\_0.6 & 41.67 & 41.80 & 16.6 & 41.72 & 41.80 & 29.0 & 0.12 & 5 & 0.00 & 5 & 0 \\ 
cbip\_70\_0.8 & 41.12 & 41.20 &  1.4 & 41.17 & 41.20 &  6.9 & 0.12 & 5 & 0.00 & 5 & 0 \\ 
cbip\_80\_0.2 & 51.13 & 51.60 &  3.5 & 51.13 & 51.60 & 24.6 & 0.01 & 4 & 0.00 & 5 & 0 \\ 
cbip\_80\_0.4 & 47.12 & 47.40 &  7.9 & 47.12 & 47.40 & 22.6 & 0.01 & 3 & 0.00 & 5 & 0 \\ 
cbip\_80\_0.6 & 46.18 & 46.40 &  1.1 & 46.24 & 46.40 & 13.3 & 0.13 & 5 & 0.00 & 5 & 0 \\ 
cbip\_80\_0.8 & 44.75 & 44.80 & 19.8 & 44.74 & 44.80 & 17.1 & -0.03 & 4 & 0.00 & 5 & 0 \\ 
\hline
Mean & 39.83 & 40.03 & 8.83 & 39.89 & 40.03 & 17.35 & 0.17 & & 0.00 \\
Total & & & & & & & & 73 & & 80 & 0\\
\hline
\end{tabular}
\label{tab:brkga_cbip_rls}
\end{table}

\begin{table}[ht]
\centering
\small
\caption{BRKGA-G and BRKGA+R+LS results for the DIMACS graphs.}
\begin{tabular}{l|rrr|rrrrr}
  \hline
\multicolumn{1}{c|}{} & 
\multicolumn{3}{c|}{BRKGA-G} & 
\multicolumn{5}{c}{BRKGA+R+LS} \\
instance & mean & max & ttb & mean & max & ttb & $diff_{m}$ & $diff_{x}$ \\
 \hline
brock200\_2 &  44.24 &  45.00 & 145.4 &  46.12 &  48.00 & 180.9 & 4.25 & 6.67 \\ 
c-fat200-1 &  18.00 &  18.00 &   0.2 &  18.00 &  18.00 &   0.4 &  0.00 &  0.00 \\ 
c-fat200-2 &  35.00 &  35.00 &   1.4 &  35.00 &  35.00 &   5.6 &  0.00 &  0.00 \\ 
c-fat200-5 &  86.80 &  87.00 &   2.6 &  86.58 &  87.00 &  81.8 & -0.25 &  0.00 \\ 
c-fat500-1 &  21.00 &  21.00 &   4.1 &  21.00 &  21.00 &   5.5 &  0.00 &  0.00 \\ 
c-fat500-2 &  39.00 &  39.00 &  39.7 &  39.00 &  39.00 &  45.2 &  0.00 &  0.00 \\ 
C125.9 &  76.82 &  78.00 &  70.5 &  75.68 &  77.00 & 161.6 &  -1.48 &  -1.28 \\ 
DSJC125.1 &  12.60 &  13.00 &  29.4 &  13.00 &  13.00 &  69.6 & 3.17 &  0.00 \\ 
DSJC125.5 &  35.94 &  38.00 &  47.3 &  36.02 &  38.00 & 143.9 & 0.22 &  0.00 \\ 
DSJC125.9 &  76.50 &  78.00 &  58.5 &  74.98 &  77.00 & 156.7 &  -1.99 &  -1.28 \\ 
DSJC250.1 &  18.00 &  18.00 &  53.9 &  18.02 &  19.00 &  83.4 & 0.11 & 5.56 \\ 
DSJR500.1 &  19.98 &  20.00 & 7.1 &  19.98 &  20.00 &  14.1 &  0.00 &  0.00 \\ 
fpsol2.i.2 &  39.98 &  40.00 &  36.8 &  40.00 &  40.00 &  45.5 & 0.05 &  0.00 \\ 
fpsol2.i.3 &  40.00 &  40.00 &  48.6 &  39.94 &  40.00 &  65.5 & -0.15 &  0.00 \\ 
hamming6-2 &  40.00 &  40.00 &   0.2 &  40.00 &  40.00 &   1.0 &  0.00 &  0.00 \\ 
hamming6-4 &  13.96 &  15.00 &  57.4 &  14.10 &  15.00 &  48.2 & 1.00 &  0.00 \\ 
hamming8-2 & 159.86 & 161.00 & 199.9 & 147.56 & 151.00 & 312.3 & -7.69 & -6.21 \\ 
hamming8-4 &  38.20 &  39.00 &  80.5 &  39.06 &  42.00 &  47.6 & 2.25 & 7.69 \\ 
johnson8-2-4 &  12.00 &  12.00 &   6.1 &  12.00 &  12.00 &   1.2 &  0.00 &  0.00 \\ 
johnson8-4-4 &  29.22 &  31.00 & 116.7 &  29.76 &  31.00 & 115.6 & 1.85 &  0.00 \\ 
keller4 &  45.50 &  48.00 & 140.4 &  44.22 &  47.00 & 199.2 &  -2.81 &  -2.08 \\ 
le450\_15a &  31.27 &  32.00 & 157.5 &  31.32 &  32.00 & 132.4 & 0.17 &  0.00 \\ 
le450\_15b &  31.82 &  32.00 & 167.9 &  32.06 &  33.00 & 145.0 & 0.75 & 3.12 \\ 
le450\_25a &  43.30 &  44.00 & 135.9 &  43.24 &  44.00 & 154.1 &  -0.14 &  0.00 \\ 
le450\_25b &  42.12 &  43.00 & 124.1 &  42.30 &  44.00 &  97.4 & 0.43 & 2.33 \\ 
le450\_5a &  18.00 &  19.00 & 119.4 &  17.98 &  18.00 &  79.8 &  -0.11 &  -5.26 \\ 
le450\_5b &  17.98 &  18.00 & 112.1 &  18.02 &  19.00 &  76.8 & 0.22 & 5.56 \\ 
le450\_5c &  22.09 &  23.00 & 158.2 &  22.08 &  23.00 & 102.2 &  -0.06 &  0.00 \\ 
le450\_5d &  21.94 &  22.00 & 172.0 &  22.04 &  23.00 &  92.0 & 0.46 & 4.55 \\ 
MANN\_a9 &  21.00 &  21.00 &   0.0 &  21.00 &  21.00 &   0.0 &  0.00 &  0.00 \\ 
mulsol.i.1 &  52.00 &  52.00 &   4.4 &  52.00 &  52.00 &   7.2 &  0.00 &  0.00 \\ 
mulsol.i.2 &  33.54 &  34.00 &  69.0 &  33.70 &  34.00 &  57.8 & 0.48 &  0.00 \\ 
mulsol.i.3 &  33.56 &  34.00 &  41.6 &  33.78 &  34.00 &  80.3 & 0.67 &  0.00 \\ 
mulsol.i.4 &  33.50 &  34.00 &  65.4 &  33.72 &  34.00 &  76.7 & 0.66 &  0.00 \\ 
mulsol.i.5 &  34.00 &  34.00 &   0.4 &  34.00 &  34.00 &   2.5 &  0.00 &  0.00 \\ 
R125.1 &   7.00 &   7.00 &   0.0 &   7.00 &   7.00 &   0.0 &  0.00 &  0.00 \\ 
R125.1c &  62.00 &  62.00 &   0.8 &  62.00 &  62.00 &  17.8 &  0.00 &  0.00 \\ 
R125.5 &  63.84 &  65.00 &  41.6 &  63.40 &  65.00 & 104.2 &  -0.69 &  0.00 \\ 
R250.1 &  12.00 &  12.00 &  16.5 &  12.00 &  12.00 &  12.4 &  0.00 &  0.00 \\ 
zeroin.i.1 &  52.88 &  53.00 &  88.0 &  52.98 &  54.00 &  84.8 & 0.19 & 1.89 \\ 
zeroin.i.2 &  35.06 &  37.00 &  55.0 &  35.10 &  37.00 &  62.5 & 0.11 &  0.00 \\ 
zeroin.i.3 &  35.04 &  37.00 &  45.8 &  35.06 &  37.00 &  63.2 & 0.06 &  0.00 \\ 
\hline
Mean & 38.2 & 38.8 & 64.8 & 37.9 & 38.7 & 76.9 & 0.04 & 0.50 \\
\hline
\end{tabular}
\label{tab:brkga_dimacs_rls}
\end{table}
\clearpage
\section{Summary of the DIMACS' instances characteristics}
\label{sec:dimacs_instance}

\begin{table}[ht]
\centering
\caption{Characteristics of the DIMACS instances}
\footnotesize
\begin{tabular}{lrr}
\hline
instance & $|V|$ & $d(G)$ \\ 
  \hline
johnson8-2-4 & 28 & 0.55  \\
johnson8-4-4 & 70 & 0.78  \\
mann\_a9 & 45 & 0.92  \\
hamming6-2 & 64 & 0.90  \\
hamming6-4 & 64 & 0.34  \\
c125.9 & 125 & 0.89  \\
dsjc125.1 & 125 & 0.09  \\
dsjc125.5 & 125 & 0.50  \\
dsjc125.9 & 125 & 0.89  \\
r125.1 & 125 & 0.02  \\
r125.1c & 125 & 0.96  \\
r125.5 & 125 & 0.49  \\
keller4 & 147 & 0.64  \\
mulsol.i.1 & 197 & 0.20  \\
mulsol.i.2 & 188 & 0.22  \\
mulsol.i.3 & 184 & 0.23  \\
mulsol.i.4 & 185 & 0.23  \\
mulsol.i.5 & 186 & 0.23  \\
brock200\_2 & 200 & 0.49  \\
c-fat200-1 & 200 & 0.07  \\
c-fat200-2 & 200 & 0.16  \\
    \hline
\end{tabular}
\hspace{1em}
\begin{tabular}{lrr}
\hline
instance & $|V|$ & $d(G)$ \\ 
  \hline
    
    c-fat200-5 & 200 & 0.42  \\
    zeroin.i.1 & 211 & 0.18  \\
    zeroin.i.2 & 206 & 0.15  \\
    zeroin.i.3 & 206 & 0.16  \\
    dsjc250.1 & 250 & 0.10  \\
    r250.1 & 250 & 0.02  \\
    hamming8-2 & 256 & 0.96  \\
    hamming8-4 & 256 & 0.63  \\
    fpsol2.i.2 & 451 & 0.08  \\
    fpsol2.i.3 & 425 & 0.09  \\
    le450\_5a & 450 & 0.05  \\
    le450\_5b & 450 & 0.05  \\
    le450\_5c & 450 & 0.09  \\
    le450\_5d & 450 & 0.09  \\
    le450\_15a & 450 & 0.08  \\
    le450\_15b & 450 & 0.08  \\
    le450\_25a & 450 & 0.08  \\
    le450\_25b & 450 & 0.08  \\
    dsjr500.1 & 500 & 0.02  \\
    c-fat500-1 & 500 & 0.03  \\
    c-fat500-2 & 500 & 0.07 \\
    \hline
\end{tabular}
\label{tab:dimacs_instances}
\end{table}
\end{appendices}



\end{document}